%
%
%
%
%
%

\documentclass[12pt,a4paper,reqno]{amsart}
\usepackage{amsmath,amssymb}
\usepackage[dvipsnames]{xcolor}
\usepackage[colorlinks=true,urlcolor=blue,linkcolor=blue,citecolor=blue]{hyperref}
\usepackage{enumitem}
\usepackage{graphicx}
\usepackage{tikz-cd}
\usetikzlibrary{calc, arrows, decorations.markings}

\numberwithin{equation}{section}
\theoremstyle{plain}
\newtheorem{thm}{Theorem}[section]
\newtheorem{lemma}[thm]{Lemma}
\newtheorem{prop}[thm]{Proposition}
\newtheorem{cor}[thm]{Corollary}

\newcommand{\A}{\mathbb{A}}
\newcommand{\B}{\mathbb{B}}
\newcommand{\C}{\mathbb{C}}
\newcommand{\D}{\mathbb{D}}
\renewcommand{\AA}{\mathbf{A}}
\newcommand{\AB}{\mathbf{A}^{2^\omega}}
\newcommand{\ABxe}{(\mathbf{A}^{2^\omega})^{\mathbf{x}}_{\mathbf{e}}}
\newcommand{\BB}{\mathbf{B}}
\newcommand{\CC}{\mathbf{C}}
\newcommand{\DD}{\mathbf{D}}
\newcommand{\E}{\mathbf{E}}
\newcommand{\F}{\mathcal{F}}
\newcommand{\FF}{\mathbf{F}}

\newcommand{\id}{\mathrm{id}}

\newcommand{\R}{\mathbf{R}}
\newcommand{\N}{\mathbb{N}}
\renewcommand{\S}{\mathbb{S}}

\renewcommand{\P}{\mathbb{P}}
\newcommand{\V}{\mathbf{V}}
\newcommand{\Z}{\mathbb{Z}}
\newcommand{\Q}{\mathbb{Q}}

\renewcommand{\L}{L}
\newcommand{\bL}{\mathbb{L}}
\newcommand{\M}{M}
\newcommand{\bM}{\mathbb{M}}
\newcommand{\MM}{\mathbf{M}}

\newcommand{\ff}{\mathbf{f}}
\newcommand{\bg}{\mathbf{g}}
\newcommand{\hh}{\mathbf{h}}
\newcommand{\ba}{\mathbf{a}}
\newcommand{\bc}{\mathbf{c}}
\newcommand{\bd}{\mathbf{d}}
\newcommand{\be}{\mathbf{e}}
\newcommand{\bo}{\mathbf{0}}
\newcommand{\bx}{\mathbf{x}}
\newcommand{\by}{\mathbf{y}}
\newcommand{\bz}{\mathbf{z}}
\newcommand{\uu}{\mathbf{u}}
\newcommand{\bs}{\mathbf{s}}

\newcommand{\Aut}{\mathrm{Aut}\hspace{0.1em}}

\newcommand{\diag}{\mathrm{diag}\hspace{0.1em}}
\newcommand{\GL}{\mathrm{GL}\hspace{0.1em}}
\newcommand{\PGL}{\mathrm{PGL}\hspace{0.1em}}
\newcommand{\Homeo}{\mathrm{Homeo}\hspace{0.1em}}

\newcommand{\st}{\: :\:}

\newcommand{\dotcup}{\mathbin{\dot{\cup}}}

\newcommand{\restr}{\!\!\restriction}



\title[Ample generics on Boolean powers]{Ample generics in automorphism groups of Boolean powers of simple Mal'cev algebras}
\author{Peter Mayr}
\address{Department of Mathematics,
University of Colorado Boulder, USA}
\email{peter.mayr@colorado.edu}
\author{Nik Ru{\v s}kuc}
\address{School of Mathematics and Statistics,
University of St Andrews, Scotland, UK}  
\email{nik.ruskuc@st-andrews.ac.uk}
\thanks{Supported by the Engineering and Physical Sciences Research Council UKRI2386 and by the  National Science Foundation under Grant No. DMS 2452289}
\subjclass[2020]{20B27 (03E15, 06E15, 08A35, 22A05, 54H10)}
\keywords{countable atomless Boolean algebra, Cantor space, 
 abelian algebra, non-abelian algebra,
diagonal conjugation action, stabiliser, small index property, uncountable cofinality, Bergman property, projective Fra{\"i}ss{\'e} limit.}

\date{\today}

\begin{document}
\maketitle

\begin{abstract}
 Let $\AA$ be a finite simple Mal'cev algebra, such as for example a finite simple group, module, ring,
 associative or Lie algebra, loop or quasigroup.
 We show that the automorphism group of a filtered Boolean power of
 continuous functions from the Cantor space $2^\omega$ to $\AA$ has ample generics.
 The proof splits into the abelian and non-abelian cases.
 In the abelian case, we use a representation by modules and the theory of $n$-systems developed by
 Kechris and Rosendal. 
 In the non-abelian case, the proof relies on the decomposition of the automorphism group as a semidirect product of a
 certain closure of a filtered Boolean power of continuous functions from $2^\omega$ to the automorphism group of $\AA$
 and the stabiliser of finitely many points in the homeomorphism group $\Homeo 2^\omega$. 
 As an intermediate step, we show that pointwise stabilisers in $\Homeo 2^\omega$ have ample generics,
 which extends Kwiatkowska's result that $\Homeo 2^\omega$ has ample generics.  
\end{abstract}

 \section{Introduction}
 
 The purpose of this paper is to prove the following.
 
\begin{thm}
\label{thm:DAG}
 Let $\AA$ be a finite simple Mal'cev algebra.
 For $n\geq 0$, let $\be := (e_1,\dots,e_n)$ be a tuple of singleton subalgebras of $\AA$
 and $\bx := (x_1,\dots,x_n)$ a tuple of distinct points in the Cantor space $2^\omega$. 

 Then the automorphism group of the filtered Boolean power
 \[ \ABxe := \{ f\colon 2^\omega\to A \text{ continuous} \st f(x_i)=e_i \text{ for } i=1,\dots,n \} \leq \AB  \]
 has ample generics.
\end{thm}

Here and throughout the paper $\AB$ denotes the algebra of all \emph{continuous} functions from the Cantor space
$2^\omega$ to $\AA$ (with the discrete topology) under pointwise operations.
 
 Theorem~\ref{thm:DAG} applies in particular for $\AA$ a finite simple group, module, ring, associative or Lie algebra,
 loop or quasigroup, etc. (see Section~\ref{sec:algebra} below).
 Its significance is that, in doing so, it establishes a whole host of algebraic structures whose automorphism groups
 have ample generics and consequently the small index property (see Section~\ref{sec:cag} below). 
 To the best of our knowledge, this result is new even for all these classical structures like groups or rings.

 Boolean powers are a useful generalization of direct powers over an infinite index set.
They have been studied as a link between algebra and logic, and also because of their intriguing algebraic and combinatorial behaviour; for more details see Section~\ref{sec:algebra}.
 Filtered Boolean powers $\ABxe$ as in Theorem~\ref{thm:DAG} arise prominently in the variety $V$ generated by $\AA$.
 For concreteness, consider the case of a finite simple non-abelian group $\AA$ with identity $1$ and $x\in 2^\omega$
 in the following.

 First $(\AB)^{x}_{1}$ is the (generalized) Fra{\"i}ss{\'e} limit of the class of finite direct powers
 $\{\AA^n \st n \in\N\}$ by~\cite[Theorem 1.1]{MR:FBP}.
 Secondly $(\AB)^{x}_{1}$ is isomorphic to the smallest normal subgroup $N$ of the (relatively) free group $\FF$
 of countable rank in $V$ such that $\FF/N$ is in the subvariety $W$ generated by all proper subgroups of $\AA$
 (see \cite[p.~367-8]{BE:SIP} or~\cite[p. 201]{BG:ARFG}).
 In fact $\FF$ is a semidirect product of $(\AB)^{x}_{1}$ and the  free group of countable rank in $W$
 \cite[Corollary 4.1]{MR:FBP}.
 Theorem~\ref{thm:DAG} yields that the automorphism group $\Aut(\AB)^x_1$ has ample generics,
 which gives a new proof for the result of Bryant and Evans~\cite{BE:SIP} that $\Aut(\AB)^x_1$ has the small index property.
 As we discuss in~\cite{MR:FBP}, the phenomena mentioned above are not particular to groups but extend to Mal'cev
 algebras (see Section~\ref{sec:algebra}).

\subsection{Outline of the proof of Theorem~\ref{thm:DAG}}
 Yet again, to help the reader focus on the salient points, let us specialise $\AA$ to be a finite simple group.
 The proof is split into two cases, depending on whether $\AA$ is abelian or non-abelian.
 These cases use very different methodologies.

For abelian $\AA$, say $\AA=(\Z_p,+,-,0)$ for $p$ prime, we will prove the result as Theorem~\ref{thm:abag}
in Section~\ref{sec:abelian} using the standard approach introduced by Kechris and Rosendal~\cite{KR:TAG}.
The class $K$ of finite direct powers of $\AA$ (equivalently, the class of finite-dimensional vector spaces over
the field $\FF_p$) has the hereditary property (HP), joint embedding property (JEP) and
the amalgamation property (AP). The Fra{\"i}ss{\'e} limit of $K$ is the countable direct sum $\AA^{(\omega)}$ of $\AA$
(equivalently the countable dimensional vector space over $\FF_p$). 
Every countable subalgebra of a direct power of $\AA$, in particular the Boolean power $\AB$ and the filtered Boolean
power $(\AB)^x_0$ for $x\in 2^\omega$, are isomorphic to $\AA^{(\omega)}$.
We show that the structures of $K$ expanded with $n\in\N$ partial isomorphisms have the JEP and the
weak amalgamation property (WAP). 
By Kechris and Rosendal~\cite[Theorem 6.2]{KR:TAG}, this is equivalent to the  Fra{\"i}ss{\'e} limit of $K$ having generic
$n$-tuples. 
The crucial simplifying step for proving WAP in the abelian case is that every partial isomorphism
on a direct power $\AA^k$ extends to a total automorphism, that is, $K$ has the
extension property for partial automorphisms (EPPA) or \emph{Hrushovski property}.
 
For non-abelian $\AA = (A,\cdot,^{-1},1)$ we have not been able to follow the same approach and to establish the WAP directly.
It is instructive to point out the obstacles and differences to the abelian case:
\begin{enumerate}
\item
 The class $K$ of finite direct powers of $\AA$ has the JEP and AP but is not closed under
 subgroups, hence does not have the HP. Still a (generalized) Fra{\"i}ss{\'e} limit of $K$ exists
 and is isomorphic to $(\AB)^{x}_{1}$ by~\cite[Theorem 1.1]{MR:FBP}.
\item
  The (filtered) Boolean powers $\AB$, $(\AB)^{x}_{1}$ and the countable direct sum $\AA^{(\omega)}$ are all
  non-isomorphic.
 Since $\AB$ does not arise as (generalized) Fra{\"i}ss{\'e} limit, it is not in the scope of the approach
 via the WAP, but is for our Theorem \ref{thm:DAG}.
\item
 $K$ does not have the EPPA, i.e. not
 every partial isomorphism on a direct power $\AA^k$ extends to a total automorphism.
\end{enumerate}  
 To overcome these limitations, we will use a different method to prove Theorem~\ref{thm:DAG} for non-abelian $\AA$
 in Section~\ref{sec:fbp}.
 The basic idea is that by~\cite{MR:FBP} the automorphism group $\Aut\ABxe$ for any (possibly empty) $\bx$ and $\be$
 splits into a semidirect product with complement $H$ isomorphic to the stabilizer of $\bx$ in the group of homeomorphisms
 of $2^\omega$. Kwiat\-kow\-ska~\cite{Kw:GHC} has constructed generics in $\Homeo 2^\omega$
 as projective  Fra{\"i}ss{\'e} limits. Generalizing her approach, we obtain that also the pointwise stabiliser
 $H$ has ample generics in Theorem~\ref{thm:H} below. Then we show that these generics in $H$ are actually generics
 in the semidirect product $\Aut\ABxe$ as well.

 \medskip
 
 The paper is structured as follows. In the remainder of this section we review the necessary notions and context
 for our theorem in more detail. The abelian case is deal with in Section \ref{sec:abelian}, and non-abelian occupies
 Sections \ref{sec:PFT}--\ref{sec:fbp}. We conclude in Section \ref{sec:qus} with some pointers for further investigation.

\subsection{Notation} Throughout the paper we write
\begin{itemize}
\item
 $\N := \{1,2,\dots\}$,
\item
  $[n] := \{1,\dots,n\}$ for $n\in\N$,
\item
 $\Delta_A$ for the diagonal relation $\bigl\{ (a,a)\colon a\in A\bigr\}$ on a set $A$,
\item
 $2^\omega$ for the Cantor space,
\item
  $2^{\omega\circ}$ for the punctured Cantor space $2^\omega\setminus\{ x_1,\dots,x_n\}$  with the subspace topology for
  $n\geq 0$ and distinct $x_1,\dots,x_n\in 2^\omega$
 (up to homeomorphism, $2^{\omega\circ}$ depends only on $n$ and not on the specific choice of $x_1,\dots, x_n$),
\item
 $\Homeo 2^\omega$ for the group of homeomorphisms of $2^\omega$, 
\item
 $\AA^{2^\omega}$ for the algebra of continuous functions from $2^\omega$ to a finite algebra $\AA$ (with discrete
 topology) under the pointwise action of the operations of $\AA$,
\item 
 $\Aut\AA$ for the group of automorphisms of a structure $\AA$.
\item
 For a group $G$ acting on a set $X$, let
  \[ G_{(x_1,\dots,x_n)} := \{ g\in G \st g(x_i) = x_i \text{ for all }i \in [n] \}  \]
 denote the pointwise stabiliser of $x_1,\dots,x_n\in X$.  
\end{itemize}

\subsection{Ample generics}

Let $G$ be a topological group.
We denote the conjugation action of $G$ on itself by $h^g:= g^{-1}hg$ for $g,h\in G$.
For $n\in\N$ this action naturally extends to the \emph{diagonal conjugation action} of $G$ on $G^n$ by
$\hh^g:=(h_1^g,\dots,h_n^g)$ where $\hh=(h_1,\dots,h_n)\in G^n$.
We say that $\hh\in G^n$ is a \emph{generic} $n$-tuple 
if its orbit $\hh^G$ under this action is comeagre in $G^n$,
i.e.~if it contains the intersection of countably many dense open subsets of $G^n$.
The group $G$ is said to have \emph{ample generics} if it has generic $n$-tuples for all $n\in\N$.

Having ample generics is a strong condition connecting algebraic and topological features of $G$.
It implies several other properties (see Section \ref{sec:cag} below) and has thus been an object of study over
the years.
It was introduced by Kechris and Rosendal \cite{KR:TAG} and many prominent permutation groups
have been shown to have this property:
\begin{itemize}
\item the symmetric group of countably infinite degree by  Kechris and Rosendal \cite{KR:TAG},
\item the automorphism group of the random graph by Hrushovski \cite{Hr:EPI},
\item the group of isometries of the rational Urysohn space by Solecki \cite{So:EPI},
\item the group of homeomorphisms $\Homeo 2^\omega$ of the Cantor space $2^\omega$ by  Kwiat\-kow\-ska \cite{Kw:GHC}.
\end{itemize}  
Conversely,
Hodkinson showed that the automorphism group of $(\Q,<)$ does not have generic pairs~\cite{KT:GAU}.

 It is worth noting that the groups that have thus far been shown to have ample generics tend to be automorphism groups of countable homogenous relational structures, homeomorphism groups of topological spaces or isometry groups of metric spaces. There is a notable absence of examples of automorphism groups of algebraic structures. One exception is the above-mentioned result of Kwiatkowska~\cite{Kw:GHC},
 because the homeomorphism group of the Cantor space can be viewed as the automorphism group of the countable atomless Boolean algebra.
 As a crucial first step towards our Theorem~\ref{thm:DAG} in the non-abelian case, we will generalize her result as
 follows.

\begin{thm} \label{thm:H}
 The pointwise stabiliser of finitely (possibly zero) many points in the homeomorphism group of the Cantor space
 $2^\omega$  has ample generics.
\end{thm}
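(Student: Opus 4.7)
The plan is to extend Kwiatkowska's proof that $\Homeo 2^\omega$ has ample generics~\cite{Kw:GHC} from $n=0$ to arbitrary $n$, working within the projective Fra\"iss\'e-limit framework of Irwin and Solecki. Since $\Homeo 2^\omega$ acts $n$-transitively on tuples of distinct points, I may place $x_1,\dots,x_n$ wherever it is convenient.

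First, I would describe $(2^\omega,x_1,\dots,x_n)$ as the projective Fra\"iss\'e limit of a suitable pointed category $\K_n$. Starting from the category $\K$ of finite structures used by Kwiatkowska to realise $2^\omega$, I would take the objects of $\K_n$ to be pairs $(F,(p_1,\dots,p_n))$ with $F\in\K$ and $p_1,\dots,p_n$ distinct distinguished elements of $F$, and the morphisms of $\K_n$ to be $\K$-epimorphisms sending the $i$-th distinguished point of the domain to the $i$-th distinguished point of the codomain. A routine check confirms that $\K_n$ inherits the joint projection and projective amalgamation properties from $\K$: any amalgamation in $\K$ can be arranged so that the distinguished points remain separated and in the prescribed order. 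The resulting projective Fra\"iss\'e limit is then identified with $(2^\omega,x_1,\dots,x_n)$, and its automorphism group is precisely the pointwise stabiliser $H=\Homeo(2^\omega)_{(x_1,\dots,x_n)}$.

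Second, I would invoke the projective analog of the Kechris-Rosendal ample-generics criterion, as set up by Kwiatkowska in~\cite{Kw:GHC} for the unpointed case. This reduces the theorem to showing that, for every $k\in\N$, the class $\K_n^{[k]}$ whose objects are $\K_n$-structures equipped with a $k$-tuple of partial endomorphisms has the joint projection property and the weak amalgamation property. The joint projection property is routine in the pointed setting: disjoint-union-style constructions, followed by identification of the distinguished points, transfer directly from $\K^{[k]}$ to $\K_n^{[k]}$.

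The main obstacle is the weak amalgamation property for $\K_n^{[k]}$. The heuristic is that, since every morphism in $\K_n$ must fix each distinguished point, the amalgamation problem decomposes into two pieces: a localised piece in small clopen neighborhoods of the $x_i$, where the construction is essentially trivial because all maps must fix those points, and an unpointed piece away from them, where Kwiatkowska's combinatorics from~\cite{Kw:GHC} applies directly. The real work is to glue these two pieces along suitable clopen boundaries so that the resulting amalgam is globally well defined and again belongs to $\K_n^{[k]}$; this gluing is where the subtleties of the original construction reappear, and I expect the bulk of the effort to be concentrated there. Once carried out, Theorem~\ref{thm:H} follows at once from the criterion, with the case $n=0$ recovering Kwiatkowska's theorem.
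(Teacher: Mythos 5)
Your overall direction—extend Kwiatkowska via a pointed projective Fra\"iss\'e family—is the right one, but the proposal has a genuine gap at its centre, and the family you set up is likely the wrong one.

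The paper's construction does \emph{not} distinguish existing elements of $\F$-structures. It adjoins $n$ new points $1,\dots,n$ to each $\A\in\F$, forming $\A^{(n)} := (A\dotcup[n];\,s_i^\A\cup\Delta_{[n]},\dots,1,\dots,n)$, so that each constant $p_j^{\A^{(n)}}$ is an \emph{isolated singleton loop}, forming its own connected component. This decoupling is not cosmetic. It is what makes the projective amalgamation proof go through: the preimage of $\{p_j^{\A^{(n)}}\}$ under an epimorphism is a union of connected components, which can be peeled off, reducing the problem to unpointed amalgamation in $\F$; and it guarantees $(p_j,p_j)\in s_i$ in every approximation, so loops persist in the inverse limit and the distinguished points are fixed by the resulting homeomorphisms. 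Your $\K_n$ picks distinguished elements \emph{inside} $F\in\K$. As written this does not force $(p_j,p_j)\in s_i^F$, so there is no reason the limit relations should fix the distinguished points at all; and if you patch this by adding loops at an element already in an $\F$-structure, that element now has extra in/out degree and typically ceases to satisfy the ``outgoing for exactly one relation'' condition defining $\F$, so you would have to leave $\F$ and identify the right enveloping class from scratch. The paper avoids this by working with a subfamily $\F^{(n)}$ of $\F_0^{(n)}$ whose $\sigma$-reducts are \emph{not} in $\F$, together with a coinitiality lemma.

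The more serious problem is that your fourth paragraph leaves the key amalgamation/gluing step entirely as a heuristic, explicitly flagging it as ``where the subtleties of the original construction reappear'' and ``where the bulk of the effort is concentrated.'' That is precisely the step the paper's design is built to handle: besides the amalgamation lemma, the paper needs a lemma showing that any clopen neighbourhood of a distinguished point contains a clopen union of connected components, and a lemma identifying restrictions of the limit to such unions as smaller Fra\"iss\'e limits; both depend heavily on the distinguished points being their own components. Without that structural decoupling, the gluing you gesture at is not routine, and the proposal as it stands has a real hole. (A smaller remark: the paper does not route through a weak-amalgamation criterion for $\K_n^{[k]}$; following Kwiatkowska, it shows directly that the orbit of the canonical tuple from the limit is dense and a countable intersection of open sets. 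This is a packaging difference rather than a gap, but worth noting since your second paragraph sets up machinery the paper does not use.)
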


Note that the isomorphism type of the stabiliser above depends only on the number of fixed points
and not on their specific choice.

For zero fixed points we recover Kwiatkowska's result for $\Homeo 2^\omega$ \cite{Kw:GHC}.
Our proof of Theorem~\ref{thm:H} in Section~\ref{sec:X0} broadly follows  Kwiat\-kow\-ska's and also
makes use of some of her findings along the way. 

 Finally we state some straightforward consequences of our proof of Theorem~\ref{thm:DAG} in the abelian case,
 which may be folklore but we have not been able to find in the literature.

\begin{cor} \label{cor:GL}
 For the countable-dimensional vector space $\V$ over a finite or countably infinite field,
 the general linear group $\GL \V$, the projective linear group $\PGL\V$ and the affine general linear group
 $\mathrm{AGL}\, \V$ have ample generics. 
\end{cor} 

 This in particular gives a new unified approach to Evans' result that $\GL\V$ has the small index property~\cite{Ev:SSI}
 and Tolstykh's result that $\GL\V$ has the Bergman property~\cite{To:IGL}.





\subsection{Mal'cev algebras and Boolean powers} \label{sec:algebra}
We refer to~\cite{BS:CUA,MMT:ALV1} for background on general algebra.
 Let $\sigma$ be a signature consisting of function symbols $f_j$ with arity $n_j \geq 0$ for $j \in J$.
 An \emph{algebra} $\AA = (A; (f_j^\AA)_{j\in J})$ of signature $\sigma$ is a set $A$ (the universe) together with functions
 $f_j^\AA\colon A^{n_j}\to A$ for all $j \in J$.
 Throughout, we will use bold letters to denote algebras and matching standard letters for the underlying universes.

 An algebra with more than one element is \emph{simple} if its only congruences are the total relation and the equality.
 This is consistent with the classical notion for groups and rings.

 An algebra $\AA$ is \emph{Mal'cev} if it has a ternary term operation $m$ satisfying $m(x,x,y) = y = m(y,x,x)$ for all
 $x,y\in A$.
 Note that every group $(G;\cdot,^{-1},1)$ has a Mal'cev term $m(x,y,z) = xy^{-1}z$.
 Similarly quasigroups, loops, rings as well as their expansions with additional operations are all Mal'cev.

 The generalization of commutator theory from groups to algebras as described in~\cite{FM:CTC} allows us to talk
 about (non-)abelian algebras. A Mal'cev algebra is \emph{abelian} if all its operations are affine functions over
 some module; otherwise we call it \emph{non-abelian}. Clearly every module is abelian.
 Abelianness as above is consistent with the classical notion for groups.
 A loop is abelian if and only if it is an abelian group.
 A ring is abelian if and only if its multiplication is constant $0$ (see~\cite{FM:CTC}).
 
 We refer to \cite{Ho:MT} for basics on Fra{\"i}ss{\'e} theory.
 As we discuss in~\cite{MR:FBP}, for $\AA$ a finite simple non-abelian Mal'cev algebra, the set of finite direct
 powers $K := \{ \AA^n\st n\in\N \}$
 has the Joint Embedding Property and the Amalgamation Property but in general not the Hereditary Property.
 Hence $K$ has a (generalized) Fra{\"i}ss{\'e} limit.
In \cite{MR:FBP} this Fra{\"i}ss{\'e} limit
is explicitly described as a filtered Boolean power of $\AA$,
a generalization of direct powers introduced by Arens and Kaplansky~\cite{AK:TRA} (see also~\cite{Ev:EAC}).
 For $\AA$ abelian, we establish the corresponding results in Section~\ref{sec:abelian}.
 Over time Boolean powers have proved a useful tool for representing algebras and transferring properties of Boolean
 algebras to arbitrary structures~\cite{Bu:BP,Fo:GBT}.
 In groups they were studied for their interesting combinatorial properties~\cite{Ap:BPG,BE:SIP}.
 We briefly review this construction.

For a topological space $X$ and a finite algebra $\AA$ endowed with the discrete topology, a function $f\colon X\to A$
is \emph{continuous} if for every $a\in A$ the pre-image $f^{-1}(a)$ is clopen in $X$. The set of all continuous functions
from $X$ to $\AA$, 
\[ \{ f\colon X\to A \st f \text{ is continuous} \}, \]
 is the universe of an algebra under the pointwise action of the operations from $\AA$.
 We denote this \emph{algebra of continuous functions}
 $\AA^X$. 
 When $X$ is also equipped with the discrete topology, $\AA^X$ is the direct power of $\AA$ consisting of
 \emph{all} functions from $X$ to $\AA$. Throughout the paper it will always be clear from context what the
 topology on $X$ is.

 For a (possibly empty) tuple of singleton subalgebras  $\be := (e_1,\dots,e_n)$ of $\AA$,
 and a tuple of distinct points $\bx := (x_1,\dots,x_n)$ in $X$,
 we denote the subalgebra of $\AA^X$ with universe
\[ \{ f\in \AA^X \colon f(x_1)=e_1,\dots,f(x_n)=e_n \} \]
 as $(\AA^X)^\bx_\be$.

 The set of clopens in a topological space forms a Boolean algebra. The above construction is of particular significance when $X$ is a Stone space, i.e. a compact Hausdorff totally disconnected space.
 These are precisely the spaces corresponding to Boolean algebras via Stone duality. In that case
 we call $\AA^X$ a \emph{Boolean power} and  $(\AA^X)^\bx_\be$ a \emph{filtered Boolean power} of $\AA$.
 For basics on Boolean algebras we refer the reader to \cite[Chapter 1]{Mo:HBA1} and to 
 \cite[Chapter~3]{Mo:HBA1} for Stone duality.
 We remark that in \cite{MR:FBP} the Boolean powers $\AA^X$ and $(\AA^X)^\bx_\be$ were denoted $\AA^\BB$ and $(\AA^\BB)^\bx_\be$,
 respectively, where $\BB$ is the Boolean algebra corresponding to the Stone space $X$.

 A Stone space of particular importance in this paper is the Cantor space $2^\omega$.
 The Boolean algebra of its clopens is the unique (up to isomorphism) countable atomless Boolean algebra.
 This is in fact the Fra{\"i}ss{\'e} limit of all finite Boolean algebras.

\subsection{Consequences of ample generics} \label{sec:cag}

In \cite{KR:TAG} Kechris and Rosendal proved that if $G$ is a Polish group (i.e.~if it is separable and completely metrisable)
with ample generics, then $G$ also has the following (see \cite{Mac:SHS} for background):
\begin{itemize}
\item
 \emph{small index property}, i.e.~every subgroup of $G$ of index less than $2^{\aleph_0}$ is
  open~\cite[Theorem 1.6]{KR:TAG},
\item \emph{automatic continuity}, i.e.~every homomorphism $\pi\colon G\to H$ where $H$ is
  a separable topological group is continuous \cite[Theorem 6.24]{KR:TAG},
\item  
 a unique Polish group topology~\cite[Corollary 6.25]{KR:TAG}.
\end{itemize}
Moreover, if $G$ is the automorphism group of an $\omega$-categorical structure and has ample generics,
then $G$ has uncountable strong cofinality.
The \emph{strong cofinality} of $G$ is the smallest cardinality $\kappa$ such that $G$ is the union
of a chain $(U_i)_{i<\kappa}$ of proper subsets of $G$ such that for all $i<j$ we have $U_i\subseteq U_j$, 
 $U_i = U_i^{-1}$ and $U_iU_i \subseteq U_k$ for some $k<\kappa$. 

 By~\cite[Theorem 2.2]{DH:GAG} a group $G$ has uncountable strong cofinality if and only if $G$ has
\begin{itemize}
\item
 \emph{uncountable cofinality},
 i.e.~$G$ is not the union of a countable chain of proper subgroups, and
\item
 the \emph{Bergman property}, i.e.~for every generating set $X$ of $G$ there exists $n\in\N$ such
 that every element of $G$ is a product of $\leq n$ elements of $X$ and their inverses.
\end{itemize}

In~\cite[Theorem 3.9]{MR:FBP} we proved that the filtered Boolean power $\ABxe$ is $\omega$-categorical.
Therefore Theorem~\ref{thm:DAG} implies the following.

\begin{cor}
  \label{cor:DAG}
 With the notation and assumptions of Theorem~\ref{thm:DAG},
 the automorphism group of the filtered Boolean power $\ABxe$ has the small index property,
 automatic continuity, unique Polish group topology, 
 uncountable (strong) cofinality and the Bergman property.
\end{cor}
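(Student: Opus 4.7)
The plan is to derive each conclusion as an immediate application of Theorem~\ref{thm:DAG} combined with the general machinery already recalled in Section~\ref{sec:cag}, so the argument is essentially one of bookkeeping.

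First, I would observe that $G := \Aut\ABxe$ is a Polish group: the universe $A^\BB$ is countable (as $\AA$ is finite and $\BB$ is countable), and $\ABxe$ is a substructure of a countable algebra in a countable signature, so $G$ embeds as a closed subgroup of the symmetric group on the countable underlying set, with the topology of pointwise convergence. Combined with Theorem~\ref{thm:DAG}, we are thus in a Polish group with ample generics.

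Next, I would invoke the three consequences of Kechris--Rosendal listed in Section~\ref{sec:cag}: by \cite[Theorem~1.6, Theorem~6.24, Corollary~6.25]{KR:TAG}, $G$ has the small index property, automatic continuity, and a unique Polish group topology. These do not require any further hypothesis on the structure beyond Polishness and ample generics, so each applies verbatim.

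For the cofinality and Bergman property, I would use the additional input that $\ABxe$ is $\omega$-categorical by~\cite[Theorem~1.3(1)]{MR:FBP}. The Kechris--Rosendal result on strong cofinality for automorphism groups of $\omega$-categorical structures with ample generics then gives that $G$ has uncountable strong cofinality. Finally, by \cite[Theorem~2.2]{DH:GAG}, uncountable strong cofinality is equivalent to the conjunction of uncountable cofinality and the Bergman property, completing the list.

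There is no substantive obstacle here, as all the hard work has been done in Theorem~\ref{thm:DAG} and in \cite{MR:FBP}; the only thing to check carefully is that the $\omega$-categoricity hypothesis needed for uncountable strong cofinality really does apply to $\ABxe$ (not merely to the unfiltered Boolean power), but this is exactly what is asserted in~\cite[Theorem~1.3(1)]{MR:FBP}.
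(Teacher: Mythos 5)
Your proposal is correct and matches the paper's (implicit) proof exactly: the corollary is derived from Theorem~\ref{thm:DAG} together with the Kechris--Rosendal consequences for Polish groups with ample generics, the $\omega$-categoricity of $\ABxe$ from~\cite[Theorem~1.3(1)]{MR:FBP}, and the Droste--Holland equivalence~\cite[Theorem~2.2]{DH:GAG}. Your added verification that $G$ is Polish (as the automorphism group of a countable structure in a countable signature) is a detail the paper leaves tacit, but it is the right observation and does not change the route.
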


 Note that points $x_1,\dots,x_n\in 2^\omega$ can be identified with maximal ideals (or ultrafilters) of the countable
 atomless Boolean algebra $\BB$. Then the augmented Boolean algebra $(\BB;x_1,\dots,x_n)$ is a structure
 with operations from the Boolean algebra and unary relations that are ideals $x_1,\dots,x_n$
 (see \cite[p. 137]{MR:CRN}).
 As explained in the proof of \cite[Theorem 3.9]{MR:FBP} it follows that $(\BB;x_1,\dots,x_n)$ is
 $\omega$-categorical by~\cite[Theorem~7]{MR:CRN}.
 Since $\Aut (\BB;x_1,\dots,x_n)$ is isomorphic to $(\Homeo 2^\omega)_{(x_1,\dots,x_n)}$,
 Theorem~\ref{thm:H} implies the following.

\begin{cor} \label{cor:H}
 The pointwise stabiliser of finitely (possibly zero) many points in the homeomorphism group of $2^\omega$
 has the small index property,
 automatic continuity, unique Polish group topology, 
 uncountable (strong) cofinality and the Bergman property.
\end{cor}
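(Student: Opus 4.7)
The plan is to derive Corollary~\ref{cor:H} as a direct application of Theorem~\ref{thm:H} together with the general consequences of ample generics collected in Section~\ref{sec:cag}. First I would note that $G := (\Homeo 2^\omega)_{(x_1,\dots,x_n)}$ is a closed subgroup of the Polish group $\Homeo 2^\omega$: each individual point stabiliser is closed because evaluation at a point is continuous on $\Homeo 2^\omega$, and $G$ is the intersection of finitely many such stabilisers. Hence $G$ is itself Polish. Since $G$ has ample generics by Theorem~\ref{thm:H}, the Kechris--Rosendal results \cite{KR:TAG} recalled in Section~\ref{sec:cag} immediately yield the small index property, automatic continuity, and the uniqueness of the Polish group topology on $G$.

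To also obtain uncountable strong cofinality, and hence by \cite[Theorem 2.2]{DH:GAG} both uncountable cofinality and the Bergman property, I still need to present $G$ as the automorphism group of an $\omega$-categorical structure. The route is via Stone duality: each $x_i \in 2^\omega$ is identified with a maximal ideal (equivalently an ultrafilter) of $\BB$, and a homeomorphism of $2^\omega$ fixes $x_i$ precisely when the Boolean automorphism it induces on $\BB$ preserves the corresponding ideal. Treating the $x_i$ as unary relations on $\BB$, this yields a natural group isomorphism
\[
 G \;\cong\; \Aut(\BB;x_1,\dots,x_n).
\]
The $\omega$-categoricity of $(\BB;x_1,\dots,x_n)$ is the same fact used in the proof of \cite[Theorem 1.3(1)]{MR:FBP}, obtained via \cite[Theorem~7]{MR:CRN}. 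Feeding this into the Kechris--Rosendal strong cofinality theorem closes out the remaining two properties.

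Because every ingredient is either cited from the literature or immediate, there is no genuine obstacle to the present derivation; the real work sits inside Theorem~\ref{thm:H}, which is proved elsewhere in the paper. The only points that I would state carefully are the Stone-dual correspondence between fixed points on the topological side and distinguished ideals on the algebraic side, and the observation that $G$, as a closed subgroup of a Polish group, inherits a Polish topology so that the ample-generics machinery from \cite{KR:TAG} applies without modification.
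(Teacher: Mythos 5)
Your proposal is correct and follows essentially the same route as the paper: apply Theorem~\ref{thm:H} together with the Kechris--Rosendal consequences from Section~\ref{sec:cag}, identifying the pointwise stabiliser with $\Aut(\BB;x_1,\dots,x_n)$ via Stone duality and invoking the $\omega$-categoricity of this structure to obtain strong cofinality. The explicit observation that the stabiliser is a closed, hence Polish, subgroup of $\Homeo 2^\omega$ is a useful clarification that the paper leaves implicit.
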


In Corollaries~\ref{cor:DAG}, \ref{cor:H}, automatic continuity as defined above is a
new consequence of Theorems~\ref{thm:DAG}, \ref{thm:H}, respectively.
The other properties listed in these corollaries have already been established by different means in~\cite{MR:FBP}.

\section{The abelian case}  \label{sec:abelian}

In this section we will prove Theorem~\ref{thm:DAG} for $\AA$ a finite simple abelian Mal'cev algebra.
Such $\AA$ are characterized as specific reducts of finite simple modules (see Section~\ref{sec:ab} below).
We first see that the class $K$ of finite direct powers of $\AA$ is Fra{\"i}ss{\'e} with limit the Boolean power $\AB$.
In fact, we show that structures in $K$ expanded with $n$ partial isomorphisms have the JEP and the
weak amalgamation property (WAP). By Kechris and Rosendal~\cite[Theorem 6.2]{KR:TAG}
this is equivalent to the  Fra{\"i}ss{\'e} limit of $K$ having generic $n$-tuples.
Finally, since all countably infinite subalgebras of a direct power of $\AA$ turn out to be isomorphic to $\AB$,
this will prove Theorem~\ref{thm:DAG} for $\AA$ abelian.

\subsection{A characterization of ample generics}
 Let $K$ be a Fra{\"i}sse class and $n\geq 0$.
 Following Kechris and Rosendal~\cite{KR:TAG}, an \emph{$n$-system} $S := (\CC,\psi_1,\dots,\psi_n)$
 consists of $\CC\in K$ with $n$ \emph{partial isomorphisms}, i.e. isomorphisms
 $\psi_1\colon\DD_1\to\E_1,\dots,\psi_n\colon\DD_n\to\E_n$ between substructures of $\CC$.
 An \emph{emdedding} $f$ of one $n$-system
 $S := (\CC,\psi_1\colon\DD_1\to\E_1,\dots,\psi_n\colon\DD_n\to\E_n)$ into another $n$-system 
 $S' := (\CC',\psi_1'\colon\DD'_1\to\E'_1,\dots,\psi'_n\colon\DD'_n\to\E'_n)$ is a function $f\colon \CC\to\CC'$
 embedding $\CC$ into $\CC'$, $\DD_i$ into $\DD_i'$ and $\E_i$ into $\E_i'$ such that $f\psi_i \subseteq \psi'_if$
 for all $i\in [n]$. We note that this condition is weaker than the standard model theoretic definition of an embedding
 when $S$ and $T$ are considered as structures.

 Let $K_p^n$ be the class of all $n$-systems for $\CC\in K$ as defined above.
 Then $K_p^n$ satisfies the \emph{weak amalgamation property (WAP)} if for all $S\in K_p^n$ there exist $T\in K_p^n$ and an
embedding $e\colon S\to T$ such that for all $T_1,T_2\in K_p^n$ and embeddings $f_1\colon T\to T_1, f_2\colon T\to T_2$
there exist $U\in K_P^n$ and embeddings $g_1\colon T_1\to U, g_2\colon T_2\to U$ such that $g_1f_1e = g_2f_2e$.
\begin{center}
  \begin{tikzcd}[row sep=small]
 &   & T_1 \arrow[dotted]{rd}{g_1} & \\
S \arrow[dotted]{r}{e} & T \arrow[ru]{}{f_1} \arrow[rd]{}[swap]{f_2} & & U \\
& & T_2 \arrow[dotted]{ru}[swap]{g_2} &
\end{tikzcd}
\end{center}

The class $K_p^n$ satisfies the \emph{cofinal amalgamation property (CAP)} if it has a cofinal (under embeddings)
subclass that has the AP. Clearly the CAP implies the WAP.

 We will use this observation and the following characterization of the existence of ample generics.

\begin{thm} \cite[Theorem 6.2]{KR:TAG} \label{thm:WAP}
 Let $K$ be a Fra{\"i}sse class. 
 Then the automorphism group of the Fra{\"i}sse limit of $K$ has ample generics if and only if $K_p^n$ has the JEP
 and the WAP for all $n\in\N$.
\end{thm}

\subsection{Finite simple abelian Mal'cev algebras} \label{sec:ab}

 Two algebras on the same universe are \emph{term equivalent} if they both have the same term functions; 
 they are \emph{polynomially equivalent} if they both have the same polynomial functions, i.e.,
 functions composed from the basic operations as well as constants on the universe.
 For example, the group $(\Z_2,+)$ and its idempotent reduct $(\Z_2,x+y+z)$ are polynomially but not term equivalent.
 Note that polynomially equivalent algebras have the same congruences, while term equivalent algebras additionally
 have the same subalgebras and homomorphisms. 
 
 Let $\AA$ be a finite simple abelian Mal'cev algebra.
 Then $\AA$ is polynomially equivalent to a finite simple module isomorphic to
\begin{equation} \label{eq:module}
 \MM := (F^m,+,(ax)_{a\in F^{m\times m}})
\end{equation}
 for some finite field $\FF = (F,+,\cdot)$ and $m\in\N$.
 More precisely, by~\cite[Corollary 2.12]{Sz:CUA} there exist 
 $0\leq r\leq m$ and $p_r := \diag(\underbrace{1,\dots,1}_{r},0\dots,0)\in F^{m\times m}$ such that
 $\AA$ is term equivalent to an algebra isomorphic to one of the following reducts of $\MM$:
\begin{equation} \label{eq:ab1}
 \AA_1 := \bigl(F^m, x-y+z, \bigl( ax+(1-a)y) )_{a\in F^{m\times m}}, p_rx\bigr)
\end{equation}
 or
\begin{equation} \label{eq:ab2}
 \AA_2 := \bigl(F^m, x-y+z, \bigl( ax+(1-a)y) )_{a\in F^{m\times m}}, p_rx, (x+c)_{c\in F^m}\bigr).
\end{equation}
Note that the proper subalgebras of $\AA_1$ are exactly the $1$-element subsets of $p_r(M) = F^r\times\{0\}^{m-r}$,
whereas $\AA_2$ has no proper subalgebras.

Using this characterization, the description of subalgebras and (partial) homomorphisms of
direct powers of $\AA$ is straightforward and does does not require the finiteness assumption on $\FF$.
For $k\in\N$ the elements of $\MM^k$ and also of $\AA_i^k$ for $i\in [2]$
can naturally be identified with $k\times m$ matrices over $F$.
 Specifically, for $x_1,\dots,x_k\in F^m$, we write the general element of  $\MM^k$  as the matrix
 \[ \bx := \begin{bmatrix} x_1 \\ \vdots \\ x_k \end{bmatrix} \in F^{k\times m}.\]
 A matrix $P = (p_{ij})\in F^{\ell\times k}$ for $k,\ell\in\N$ then naturally induces a module homomorphism  
 $\MM^k\rightarrow \MM^\ell$ via
 \[ \bx\mapsto P\bx :=  \begin{bmatrix} \sum_{j=1}^k p_{1j} x_j \\ \vdots \\ \sum_{j=1}^k p_{\ell j} x_j \end{bmatrix} \in M^\ell. \]
 
\begin{lemma}  \label{lem:Hrushovski}
  Let $k,\ell\in\N$. For
$\MM, \AA_1,\AA_2$ over an arbitrary field $\FF$ as in~\eqref{eq:module},
\eqref{eq:ab1}, \eqref{eq:ab2}, respectively,
 we have:
\begin{enumerate}
\item \label{it:ab1}
 $U\leq \AA_1^k$ if and only if $U = \bd+V$ for some $\bd\in p_r(M)^k$ and $V\leq\MM^k$.
\item \label{it:ab2}
  $U\leq \AA_2^k$ if and only if $U = \bd+V$ for some $\bd\in p_r(M)^k$ and $\{(c,\dots,c) \st c\in M\} \leq V\leq\MM^k$.
\item  \label{it:ab1hom}
 $\varphi\colon\AA_1^k\to\AA_1^\ell$ is a homomorphism if and only if there exist $P\in F^{\ell\times k}$ and
 $\bd\in p_r(M)^\ell$ such that
 \[ \varphi(\bx) = P\bx+\bd \text{ for all } \bx \in M^k. \]
\item \label{it:ab2hom}
 $\varphi\colon\AA_2^k\to\AA_2^\ell$ is a homomorphism if and only if there exist $P = (p_{ij})\in F^{\ell\times k}$
 with $\sum_{j=1}^k p_{ij} = 1$ for all $i\in [\ell]$ and $\bd\in p_r(M)^\ell$ such that
 \[ \varphi(\bx) = P\bx+\bd \text{ for all } \bx \in M^k. \]
\item  \label{it:ab}
  (HP)
  For $i\in [2]$,  every subalgebra $U\leq \AA_i^k$ is isomorphic to $\AA_i^n$ for some $0\leq n\leq k$. 
\item \label{it:Hrushovski}
 (cf. EPPA, Hrushovski property~\cite[Def. 6.3]{KR:TAG})
 For $i\in [2]$ and $U\leq \AA_i^k$, every embedding $\varphi\colon U\to\AA_i^k$ extends to an automorphism of $\AA_i^k$.
\end{enumerate}    
\end{lemma}

\begin{proof}
 We give brief sketches of the standard Linear Algebra proofs.
  
\eqref{it:ab1}
 Since any $U \leq \AA_1^k$ is closed under the Mal'cev term $x-y+z$, it is a coset $\bd+V$ of a submodule
 $V\leq\MM^k$. Closure under $p_r$ yields $\bd\in  p_r(M)^k$.
 Conversely, every such coset is closed under all operations of $\AA_1$, hence a subalgebra of $\AA_1^k$.

\eqref{it:ab2} is as the previous case with the additional condition that $U$ is closed under translations
 $x+c$ for $c\in M$, which forces $V$ to contain the diagonal $\{(c,\dots,c) \st c\in M\}$.

\eqref{it:ab1hom}
 Since any homomorphism $\varphi$ preserves the Mal'cev term $x-y+z$, it is an affine map.  
 Since any endomorphism on $\AA_1$ preserves $ax+(1-a)y$ for all $a\in F^{m\times m}$ and $p_r$,
 it turns out to be of the form $x\mapsto fx+d$ for some $f\in F$ and $d\in p_r(F^m)$.
 If follows that $\varphi(\bx) = P\bx+\bd$ as required.
The converse is clear.

\eqref{it:ab2hom} is as the previous case with the additional condition that $\varphi$ preserves the
translations $x+c$, which forces $P$ to send the diagonal vector $(c,\dots,c)\in F^k$ to $(c,\dots,c)\in F^\ell$
for all $c\in M$, i.e. the sum of every row of $P$ is $1$.

\eqref{it:ab}
 From the previous items it follows that every subalgebra of $\AA_i^k$ occurs as isomorphic image of a power
 $\AA_i^n$ for $0\leq n\leq k$.

\eqref{it:Hrushovski} follows using the previous items.
\end{proof}

\subsection{Amalgamation}
Fix $i\in [2]$, and let $K$ denote the class of isomorphic copies of finite direct
powers of~$\AA_i$ over an arbitrary field $\FF$ as in~\eqref{eq:ab1}, \eqref{eq:ab2}
(including the singleton algebra $\AA_i^0$ if $i=1$).

Recall that $\AA_i$ is polynomially equivalent to a module $\MM$ with zero~$0$ as in~\eqref{eq:module}. 
For $k\in\N$, note that
\[ \Aut\AA_i^k\cap\Aut\MM^k = \{ \phi\in\Aut \AA_i^k \st \phi(0,\dots,0) = (0,\dots,0) \} \]
by Lemma~\ref{lem:Hrushovski}\eqref{it:ab1hom}\eqref{it:ab2hom}.

 For $n\geq 0$, recall that $K_p^n$ is the class of algebras in $K$ expanded with $n$ partial isomorphisms.
 Let $K_0^n \subseteq K_p^n$ be the subclass of $n$-systems with $n$ \emph{total module automorphisms},
 i.e. isomorphic copies of $(\AA_i^k,\phi_1,\dots,\phi_n)$ for
$\phi_1,\dots,\phi_n\in\Aut\AA_i^k\cap\Aut\MM^k$.
 We will show that $K_0^n$ is cofinal in $K_p^n$ and has the HP, JEP and AP.
 Since $K = K_0^0$, this also yields that $K$ itself is a Fra{\"i}ss\'e class.
                                             
\begin{lemma}   \label{lem:cofinal}
 $K_0^n$ is cofinal in $K_p^n$.
\end{lemma}
  
\begin{proof}
 By Lemma~\ref{lem:Hrushovski}\eqref{it:Hrushovski}, the class of expansions of powers of $\AA_1, \AA_2$,
 respectively, with $n$ total automorphisms is cofinal in $K_p^n$.
 It remains to show that these automorphisms can be chosen to fix the zero vector $\bo$ in these powers.

 For this let $k\in\N$ and $\psi_1,\dots,\psi_n\in\Aut\AA_i^k$. Let $\be := (e_1,\dots,e_r)$ be a basis for
 the $F$-vector space $F^r\times \{0\}^{m-r}$. Let $\bo := (0,\dots,0)\in M^k$.
 Since $\psi_p(\bo) \in (F^r\times\{0\}^{m-r})^k$
 by Lemma~\ref{lem:Hrushovski}\eqref{it:ab1hom}\eqref{it:ab2hom},
 we have $c^{p}_{ij}\in F$ such that 
\[ \psi_p(\bo) = (\sum_{j=1}^r c^p_{ij}e_j)_{i\in [k]}. \]

 {\bf Case $\AA_1$ as in~\eqref{eq:ab1}:}
 Identifying the concatenation of $\bx\in M^k$ and $\be\in M^r$ as
 $(k+r)$-tuple over $M$,
\[ h\colon \AA_1^k\to\AA_1^{k+r},\ \bx \mapsto (\bx,\be), \] 
is an embedding since $e_1,\dots,e_r$ each form a one-element subalgebra of~$\AA_1$. 
 For $p\in [n]$ define
\[ \phi_p \colon \AA_1^{k+r}\to\AA_1^{k+r},\ (\bx,\by) \mapsto \bigl(\psi_p(\bx)-\psi_p(\bo)+(\sum_{j=1}^r c^p_{ij}y_j)_{i\in [k]},\, \by), \]
where $\bx\in M^k$ and $\by := (y_1,\dots,y_r)\in M^r$.
Then $\phi_p$ is an automorphism on $\AA_1^{k+r}$ fixing the zero vector. Further
\[ h\psi_p(\bx) = (\psi_p(\bx),\be) = \phi_p h(\bx) \quad \text{for all } \bx\in M^k. \]
Hence $h$ embeds $(\AA_1^k,\psi_1,\dots,\psi_n)$ into $(\AA_1^{k+r},\phi_1,\dots,\phi_n)$, which proves the lemma for
$\AA_1$ as in~\eqref{eq:ab1}.

{\bf Case $\AA_2$ as in~\eqref{eq:ab2}:}
Recall that the translation by $e_i$ for $i\in [r]$ is a basic operation of $\AA_2$,
and note that it is also an automorphism by Lemma~\ref{lem:Hrushovski}\eqref{it:ab2hom}.
 For $\bx\in M^k$, we denote the corresponding componentwise translation $\bx+e_i \in M^k$.
 Identifying the concatenation of $\bx, \bx+e_1,\dots,\bx+e_r\in M^k$ as $k(r+1)$-tuple over $M$,
\[ h\colon \AA_2^k\to\AA_2^{k(r+1)},\ \bx \mapsto (\bx,\bx+e_1,\dots,\bx+e_r), \] 
is an embedding.

Let $\pi_1\colon \AA_2^k\to \AA_2,\ (x_1,\dots,x_k)\mapsto x_1,$ be the projection on the first component.
For $p\in [n]$ and $\by_0,\dots,\by_r\in M^k$, define
\[ \gamma_p\colon \AA_2^{k(r+1)}\to\AA_2^{k},\ (\by_0,\dots,\by_r) \mapsto (\sum_{j=1}^r c^p_{ij} \pi_1(\by_j-\by_0))_{i\in [k]}. \] 
Then $\gamma_ph(\bx) =  (\sum_{j=1}^r c^p_{ij}e_j)_{i\in [k]} = \psi_p(\bo)$ for all $\bx\in M^k$.
 Note that with respect to the standard basis $\gamma_p$ has a matrix representation $[C_0\, C_1\dots\, C_r] \in F^{k\times k(r+1)}$ for $k\times k$ blocks $C_i$ such that $\sum_{i=1}^r C_i = -C_0$.

 Next define
\begin{align*}
  \phi_p\colon \AA_2^{k(r+1)} &\to\AA_2^{k(r+1)},\\
  (\by_0,\dots,\by_r) &\mapsto \bigl(\psi_p(\by_0)-\psi(\bo)+\gamma_p(\by_0,\dots,\by_r),\dots,\psi_p(\by_r)-\psi(\bo)+\gamma_p(\by_0,\dots,\by_r)\bigr). \end{align*}
 Since $\gamma_p$ fixes the zero vector by its definition, so does $\phi_p$.
 To see that $\phi_p$ is an automorphism of $\AA_2^{k(r+1)}$, first let $P\in F^{k\times k}$ be the matrix representation of
 the linear map $\AA_2^k\to\AA_2^k, \bx\mapsto\psi_p(\bx)-\psi_p(\bo)$,
 for the standard basis. Then $\phi_p$ has a representation by the block matrix
 \[ \begin{bmatrix} P & 0 &\dots & 0 \\
      0 & P & &\vdots \\
      \vdots && \ddots & 0 \\
      0 & \dots & 0 &P \end{bmatrix} + \begin{bmatrix} C_0 & C_1 & \dots & C_r \\ C_0 & C_1 & \dots & C_r \\ \vdots & \vdots & &\vdots \\ C_0 & C_1 & \dots & C_r \end{bmatrix}. \]
  To see that this matrix is regular, first add block columns $2$ to $r+1$ to the first block column to get
 \[ \begin{bmatrix} P & 0 &\dots & 0 \\
      P & P & &\vdots \\
      \vdots && \ddots & 0 \\
      P & \dots & 0 &P \end{bmatrix} + \begin{bmatrix} 0 & C_1 & \dots & C_r \\ 0 & C_1 & \dots & C_r \\ \vdots & \vdots & &\vdots \\ 0 & C_1 & \dots & C_r \end{bmatrix}. \]
 Then subtract the first block row from each of the following to obtain
\[ \begin{bmatrix} P & 0 &\dots & 0 \\
      0 & P & &\vdots \\
      \vdots && \ddots & 0 \\
      0 & \dots & 0 &P \end{bmatrix} + \begin{bmatrix} 0 & C_1 & \dots & C_r \\ 0 & 0 & \dots & 0 \\ \vdots & \vdots & &\vdots \\ 0 & 0 & \dots & 0 \end{bmatrix}, \]
  which is regular since $P$ is. It then follows that
  the original matrix representation of $\phi_p$ is regular and $\phi_p$ is an automorphism.

 Finally
\[ h\psi_p(\bx) = (\psi_p(\bx),\psi_p(\bx)+e_1,\dots,\psi_p(\bx)+e_r) = \phi_p h(\bx) \quad \text{for all } \bx\in M^k. \]
Hence $h$ embeds $(\AA_2^k,\psi_1,\dots,\psi_n)$ into $(\AA_2^{k(r+1)},\phi_1,\dots,\phi_n)$, which proves the lemma for $\AA_2$
as in~\eqref{eq:ab2}.
\end{proof}

Although an embedding $f\colon S\to T$ for $S,T\in K_0^n$ may not map the zero in $S$ to the zero in $T$,
it factors an embedding which does that.

\begin{lemma} \label{lem:f0}
 For $i\in [2]$, let $S := (\AA_i^k,\psi_1,\dots,\psi_n)$, $T:= (\AA_i^{\ell},\phi_1,\dots,\phi_n)\in K_0^n$
 with an embedding $f\colon S\to T$. Then
 \[ g\colon \AA_i^\ell\to\AA_i^\ell,\ \bx\mapsto \bx-f(\bo_{M^k}), \]
 is an automorphism of $T$ and $gf(\bo_{M^k}) = \bo_{M^\ell}$.
\end{lemma}

\begin{proof}
  Clearly $g\in\Aut\AA_i^\ell$ and $gf(\bo_{M^k}) = \bo_{M^\ell}$.
  To see that $g$ preserves $\phi_p$ for $p\in [n]$, let $\bx\in M^\ell$. Since $\phi_p\in\Aut\MM^\ell$, we have
 \[ \phi_p g(\bx) = \phi_p(\bx-f(\bo)) = \phi_p(\bx) - \underbrace{\phi_p f(\bo)}_{= f\psi_p(\bo)}  = \phi_p(\bx)-f(\bo) = g\phi_p(\bx). \]
\end{proof}

 Finally we are ready to show that $K_0^n$ is a Fra{\"i}ss{\'e} class.

\begin{lemma}   \label{lem:K0n}
 $K_0^n$ has the HP, JEP and AP.
\end{lemma}
  
\begin{proof}
 The HP is immediate from Lemma~\ref{lem:Hrushovski}\eqref{it:ab}.
 The JEP will follow from the AP since every structure in $K_0^n$ embeds $(0,\id,\dots,\id)$
 if $i=1$ or $(\AA_2,\id,\dots,\id)$ if $i=2$.
 For the AP, let $k,\ell_1,\ell_2\in\N$, and $S:=(\AA_i^k,\sigma_1,\dots,\sigma_n)$,
 $T_1:=(\AA_i^{\ell_1},\psi_1,\dots,\psi_n)$, $T_2:= (\AA_i^{\ell_2},\phi_1,\dots,\phi_n)\in K_0^n$ with embeddings
 \[ f_1\colon S\to T_1, \quad f_2\colon S\to T_2. \]
 By Lemma~\ref{lem:f0} we may assume that $f_j(\bo_{M^k})=\bo_{M^{\ell_j}}$ for $j\in [2]$. 
 We construct an amalgam as a quotient of $T_1\times T_2$ by considering the two cases $\AA_1$ and $\AA_2$.
 
{\bf Case $\AA_1$ as in~\eqref{eq:ab1}:} Let
\[ E := \{ (f_1(\bx),-f_2(\bx)) \st \bx\in M^k \} \leq \MM^{\ell_1}\times\MM^{\ell_2}. \]
 Let $\equiv$ be the congruence on $\AA_1^{\ell_1}\times\AA_1^{\ell_2}$ defined by
\[ (\by_1,\bz_1) \equiv (\by_2,\bz_2) \text{ if } (\by_1,\bz_1) - (\by_2,\bz_2) \in E. \]
 That is, the congruence classes of $\equiv$ are the cosets of $E$ in $\MM^{\ell_1}\times\MM^{\ell_2}$.
 Let $D\leq \MM^{\ell_1}\times\MM^{\ell_2}$ be a direct module complement of $E$. Then $D$ is a transversal through
 the cosets of $E$ and $D$ is a subalgebra of $\AA_1^{\ell_1}\times\AA_1^{\ell_2}$ by
 Lemma~\ref{lem:Hrushovski}\eqref{it:ab1}.
 Hence the quotient $\R := (\AA_1^{\ell_1}\times\AA_1^{\ell_2})/\equiv$ is isomorphic to $\AA_1^{\ell_1+\ell_2-k}$.

 Further we claim that $(\psi_p,\phi_p)$ preserves $\equiv$ for all $p\in [n]$.
 To see that let $(\by_1,\bz_1) \equiv (\by_2,\bz_2)$, that is,
\[ (\by_1,\bz_1) - (\by_2,\bz_2) = (f_1(\bx),-f_2(\bx)) \text{ for some } \bx\in M^k. \] 
Applying $(\psi_p,\phi_p)$ componentwise on both sides and using that they are module homomorphisms yields
\begin{align*}
  (\psi_p(\by_1),\phi_p(\bz_1)) - (\psi_p(\by_2),\phi_p(\bz_2))
  & = ( \psi_p f_1(\bx),\ -\phi_pf_2(\bx) ) \\
  & = ( f_1\sigma_p(\bx)  , -f_2\sigma_p(\bx)) \in E. 
\end{align*}
 Hence $\equiv$ is a congruence on $T_1\times T_2$ with quotient $R := (T_1\times T_2)/\equiv$ in $K_0^n$.

 It is now straightforward that
\begin{align*}
 & h_1\colon T_1\to R,\ \by\mapsto (\by,\bo)+E, \\
 & h_2\colon T_2\to R,\ \bz\mapsto (\bo,\bz)+E,
\end{align*}
 are embeddings and $h_1f_1=h_2f_2$. Hence $K_0^n$ has the AP for $\AA_1$ as in~\eqref{eq:ab1}. 

 {\bf Case $\AA_2$ as in~\eqref{eq:ab2}:} We adapt the arguments from the previous case.
 Let $U\leq\MM^k$ be a direct complement of the diagonal $D := \{(c,\dots,c) \st c\in M \}\leq\MM^k$.
 Then $\AA_2^k/U \cong\AA_2$. Let
\[ E := \{ (f_1(\bx),-f_2(\bx)) \st \bx\in U \} \leq \MM^{\ell_1}\times\MM^{\ell_2} \]
 and let $\equiv$ be the congruence on $\AA_2^{\ell_1}\times\AA_2^{\ell_2}$ defined by
\[ (\by_1,\bz_1) \equiv (\by_2,\bz_2) \text{ if } (\by_1,\bz_1) - (\by_2,\bz_2) \in E. \]
 Again the congruence classes of $\equiv$ are the cosets of $E$ in $\MM^{\ell_1}\times\MM^{\ell_2}$.
 Since $f_1,f_2$ are embeddings, the only diagonal element in $E$ is $(\bo_{M^{\ell_1}},\bo_{M^{\ell_2}})$.
 By Lemma~\ref{lem:Hrushovski}\eqref{it:ab2} there exists a subalgebra of $\AA_2^{\ell_1}\times\AA_2^{\ell_2}$
 which is a transversal through the cosets of $E$.
 Hence the quotient $\R := (\AA_2^{\ell_1}\times\AA_2^{\ell_2})/\equiv$ is isomorphic to $\AA_2^{\ell_1+\ell_2-k+1}$.

 Further we claim that $(\psi_p,\phi_p)$ preserves $\equiv$ for all $p\in [m]$.
 To see that let $(\by_1,\bz_1) \equiv (\by_2,\bz_2)$, that is,
\[ (\by_1,\bz_1) - (\by_2,\bz_2) = (f_1(\bx),-f_2(\bx)) \text{ for some } \bx\in U. \] 
 Applying $(\psi_p,\phi_p)$ componentwise on both sides and using that they are group homomorphisms yields
\begin{align*}
  (\psi_p(\by_1),\phi_p(\bz_1)) - (\psi_p(\by_2),\phi_p(\bz_2))
  & = ( \psi_p f_1(\bx),\ -\phi_pf_2(\bx) ) \\
  & = ( f_1\sigma_p(\bx)  , -f_2\sigma_p(\bx)) \in E. 
\end{align*}
 Hence $\equiv$ is a congruence on $T_1\times T_2$ with quotient $R := (T_1\times T_2)/\equiv$ in $K_0^n$.

 To define embeddings from $T_j$ into $R$ for $j\in [2]$,
 let $f_j(U)\leq W_j\leq\MM^{\ell_j}$ be a direct complement to the diagonal $\{(c,\dots,c) \st c\in M \}$ in $\MM^{\ell_j}$.
 Then every element in $M^{\ell_j}$ can be written uniquely as a translate $\by+c$ for $\by\in W_j$ and $c\in M$.
 Define 
\begin{align*}
 & h_1\colon T_1\to R\ \text{ by } h_1(\by+c) := (\by,\bo_{M^{\ell_2}})+c+E \text{ for } \by\in W_1,c\in M, \\
 & h_2\colon T_2\to R\ \text{ by } h_2(\bz+c) := (\bo_{M^{\ell_1}},\bz)+c+E \text{ for } \bz\in W_2,c\in M.
\end{align*}
 Again it is straightforward to check that $h_1,h_2$ are embeddings and $h_1f_1=h_2f_2$.
 This concludes the proof that $K_0^n$ has the AP for $\AA_2$ as in~\eqref{eq:ab2} and of the lemma.
\end{proof}

 We are now ready to prove the main result of this section, which yields Theorem~\ref{thm:DAG} for $\AA$ abelian.

\begin{thm} \label{thm:abag}
 Let $i\in [2]$, and let $\DD$ be the Fra{\"i}ss{\'e} limit of the class $K$ of finite direct powers of $\AA_i$
 as in~\eqref{eq:ab1}, \eqref{eq:ab2}, respectively, over a finite or countably infinite field $\FF$.
 Then $\Aut\DD$ has ample generics.
\end{thm}

\begin{proof}
Since $K_0^n$ is cofinal in $K_p^n$ for every $n\in\N$ by Lemma~\ref{lem:cofinal} and the former has the JEP and AP
by Lemma~\ref{lem:K0n}, $K_p^n$ has the JEP and CAP, consequently the WAP.
The result follows from Theorem~\ref{thm:WAP}.
\end{proof}

\begin{proof}[Proof of Theorem \ref{thm:DAG} in the abelian case]
 Recall that  every finite simple abelian Mal'cev algebra $\AA$ is term equivalent to an algebra isomorphic to one of
 $\AA_1$ or $\AA_2$ over a finite field $\FF$ by~\cite[Corollary 2.12]{Sz:CUA}.
 Hence the Fra\"iss\'e limit $\DD$ of the class of finite direct powers of $\AA$ has ample generic automorphisms
 by Theorem~\ref{thm:abag}.
 Moreover, a standard back-and-forth argument using Lemma~\ref{lem:Hrushovski} yields that $\DD$ is isomorphic
 to any countably, but not finitely generated subalgebra of the full direct power $\AA^X$ for any set $X$.
 In particular, $\DD$ is isomorphic to any filtered Boolean power $\ABxe$ and the result is proved.
\end{proof}

\begin{proof}[Proof of Corollary \ref{cor:GL}]
 Let $\FF := (F,+,\cdot)$ be a finite or countably infinite field. The countable-dimensional $\FF$-vector space
 $\V$ is the Fra{\"i}ss{\'e} limit of the finite-dimensional $\FF$-vector spaces, i.e., the finite powers of the 
 $1$-dimensional $\FF$-vector space $\AA := (F,+,(ax)_{a\in F})$, which is term equivalent to $\AA_1$ as
 in~\eqref{eq:ab1} with $m=1$ and $r=0$. Hence $\GL\V$ has ample generics by Theorem~\ref{thm:abag}. 

 Similarly, the idempotent reduct $(F,x-y+z, (ax+(1-a)y)_{a\in F})$ of $\AA$ is term equivalent to $\AA_1$
 as in~\eqref{eq:ab1} for $m=r=1$. The Fra{\"i}ss{\'e} limit of the finite direct powers is the idempotent reduct
 of $\V$, whose automorphism group $\mathrm{AGL}\, \V$ also has ample generics  by Theorem~\ref{thm:abag}.

 Finally $\PGL\V$, the quotient of $\GL\V$ by its center $Z$ of non-zero scalar transformations of $\V$,
 acts on the projective space $P(\V)$. The open sets under the topology of pointwise convergence of $\PGL\V$
 are of the form $UZ/Z$ for open sets $U$ of $\GL\V$. Further the conjugacy classes of $\PGL\V$ are of the form
 $CZ/Z$ for conjugacy classes $C$ of $\GL\V$. Hence any generic $n$-tuple $(g_1,\dots,g_n)$ in $\GL\V$ yields
 a generic tuple  $(g_1Z,\dots,g_nZ)$ in $\PGL\V$. Thus $\PGL\V$ has ample generics.
\end{proof}

\section{Projective Fra{\"\i}ss\'e theory}
\label{sec:PFT}

We recall basic notions and results on the projective Fra{\"\i}ss\'e theory developed by Irwin and
Solecki~\cite{IS:PFL} as needed for our proof of Theorem~\ref{thm:H}.
Let $\sigma$ be a signature (language) consisting of relation symbols $R_i$ with arity $m_i\in\N$ for $i\in I$ and function symbols
$f_j$ with arity $n_j \geq 0$ for $j \in J$.
A \emph{topological $\sigma$-structure} $\A$ is a zero-dimensional, compact, second countable space $A$ together with closed sets
$R_i^\A \subseteq A^{m_i}$ for all $i \in I$ and continuous functions $f_j^\A\colon A^{n_j}\to A$ for all $j \in J$.
For a function symbol $f_j$ of arity $0$, the corresponding function $f_j^\A$ is simply interpreted as a constant
in $A$. Throughout, we will use blackboard bold letters to denote topological $\sigma$-structures
and matching standard letters for the underlying topological spaces.

 For topological $\sigma$-structures $\A,\B$, an \emph{epimorphism} $\phi\colon\A\to\B$ is 
 a surjective continuous map $A\to B$
 such that for every function symbol $f$ with arity $n\geq 0$ in $\sigma$ and
 for all 
 $\bx\in A^n$
 we have
\[ \phi(f^\A(\bx)) = f^\B(\phi(\bx)) \]
 and for every  relation symbol $R$ in $\sigma$ we have 
 \[ \phi(R^\A) = R^\B, \]
 where $\phi$ is applied componentwise on the tuples in $R^\A$.

A family $\F$ of topological $\sigma$-structures is a \emph{projective Fra{\"\i}ss\'e family} if it satisfies
the following:
\begin{enumerate}[label={(JPP)}]
\item \label{JPP}
  (Joint Projection Property) For any $\A_1,\A_2\in\F$ there exist ${\B\in\F}$ and epimorphisms $\psi_1,\psi_2$
  from $\B$ onto $\A_1$ and $\A_2$, respectively.

\begin{center}
  \begin{tikzcd}[row sep=small]
    & \A_1 \\
\B \arrow[dotted]{ru}{\psi_1} \arrow[dotted]{rd}{}[swap]{\psi_2} & \\
& \A_2
\end{tikzcd}
\end{center}
\end{enumerate}

\begin{enumerate}[label={(PAP)}]
\item\label{AP}
(Projective Amalgamation Property) For $\A_1,\A_2,\B\in\F$ and epimorphisms $\phi_1\colon\A_1\to\B$ and $\phi_2\colon\A_2\to\B$,
there exist $\C\in\F$ and epimorphisms $\psi_1\colon \C\to\A_1$ and $\psi_2\colon \C\to\A_2$ such that
$\phi_1\psi_1=\phi_2\psi_2$.

\begin{center}
  \begin{tikzcd}[row sep=small]
    & \A_1 \arrow[rd]{}{\phi_1} & \\
\C \arrow[dotted]{ru}{\psi_1} \arrow[dotted]{rd}{}[swap]{\psi_2} & & \B \\
& \A_2 \arrow[ru]{}[swap]{\phi_2} &
\end{tikzcd}
\end{center}
\end{enumerate}

A topological $\sigma$-structure $\bL$ is a \emph{projective Fra{\"\i}ss\'e limit} of a \emph{projective Fra{\"\i}ss\'e family} $\F$ if it satisfies the following:
\begin{enumerate}[label={(L\arabic*)}]
\item\label{it:L1}
(Projective Universality) For any $\A\in\F$ there exists an epimorphism $\bL\rightarrow \A$.
\item\label{it:L2}
(Factoring Property)
For any finite discrete topological space $A$ and any continuous function $f\colon \L\rightarrow A$
there exist $\B\in\F$, an epimorphism $\psi\colon \bL\rightarrow \B$ and a function $g\colon B\rightarrow A$ such that $f=g\psi$.

\begin{center}
\begin{tikzcd}
\bL  \arrow{r}{}{f} \arrow[dotted]{rd}[swap]{\psi}& A \\
& \B \arrow[dotted]{u}[swap]{g} 
\end{tikzcd}
\end{center}
\item\label{it:L3}
(Projective Homogeneity)
For any $\A\in\F$ and any epimorphisms $\phi_1,\phi_2\colon\bL\rightarrow \A$ there exists an automorphism $\alpha\colon\bL\rightarrow\bL$ such that $\phi_1=\phi_2\alpha$.

\begin{center}
\begin{tikzcd}
\bL \arrow[dotted]{d}{}[swap]{\alpha} \arrow{r}{}{\phi_1} & \A \\
\bL \arrow{ru}[swap]{\phi_2} &
\end{tikzcd}
\end{center}
\end{enumerate}

The next result of Irwin and Solecki is fundamental for Kwiatkowska's approach of constructing generics on $2^\omega$
as projective Fra{\"\i}ss\'e limits in~\cite{Kw:GHC}, which we modify for our purposes in Section~\ref{sec:X0}.

\begin{thm}[{\cite[Theorem 2.4]{IS:PFL}}]
\label{thm:IrwSol}
Every countable projective Fra{\"\i}ss\'e family of finite topological $\sigma$-structures has a projective Fra{\"\i}ss\'e limit, and any two such limits are isomorphic.
\end{thm}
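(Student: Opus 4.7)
The plan is to dualise the classical Fra{\"\i}ss\'e construction: enumerate the family, build an inverse sequence in $\F$ that eventually hits every $\A\in\F$ and amalgamates every pair of epimorphisms, then take the inverse limit and verify \ref{it:L1}--\ref{it:L3} and uniqueness by back-and-forth.

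I would enumerate $\F = \{\A_k\}_{k\in\N}$ and also enumerate all pairs $(\phi_1,\phi_2)$ of epimorphisms between members of $\F$ (finite data, hence countable). I build an inverse sequence
\[ \B_0 \xleftarrow{\pi_0} \B_1 \xleftarrow{\pi_1} \B_2 \xleftarrow{\pi_2} \cdots \quad \text{in } \F \]
by dovetailing two kinds of step. A \emph{projection step} for $\A_k$ uses \ref{JPP} to obtain $\B_{n+1}\in\F$ together with epimorphisms onto $\B_n$ and $\A_k$. An \emph{amalgamation step} for a pair $\phi_1,\phi_2\colon \B_n\to\A$ uses \ref{AP} to obtain $\B_{n+1}\in\F$ and epimorphisms $\psi_1,\psi_2\colon \B_{n+1}\to\B_n$ with $\phi_1\psi_1=\phi_2\psi_2$. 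A standard dovetailing ensures that every item in either enumeration is eventually handled.

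I then set $L:=\varprojlim_n B_n$ with the inverse limit topology and natural projections $p_n\colon L\to B_n$, and define the topological $\sigma$-structure $\bL$ by $R^{\bL}:=\bigcap_n p_n^{-1}(R^{\B_n})$ and by inverse-limiting each function symbol. Since each $B_n$ is finite discrete, $L$ is compact, zero-dimensional and second countable. Property \ref{it:L1} is immediate from the projection-step construction composed with $p_n$. For \ref{it:L2}, any continuous $f\colon L\to A$ into a finite discrete space has clopen fibres, and the clopen partitions of the form $p_n^{-1}(\text{partition of } B_n)$ are cofinal among all clopen partitions of $L$, so $f$ factors through some $p_n$. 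For \ref{it:L3}, given $\phi_1,\phi_2\colon\bL\to\A$, I use \ref{it:L2} to factor both through some common level $\B_{n_0}$, and then alternate amalgamation steps along the chain to build compatible finite partial isomorphisms $\B_{n_k}\to\B_{m_k}$ whose inverse limit yields the required $\alpha\in\Aut\bL$ with $\phi_1=\phi_2\alpha$.

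For uniqueness, given two limits $\bL,\bL'$, properties \ref{it:L1} and \ref{it:L2} realise each as an inverse limit of sequences in $\F$; a back-and-forth alternating between the two sequences, invoking \ref{it:L3} on each side to keep all previously built diagrams commuting, yields compatible systems of epimorphisms whose joint limit is an isomorphism $\bL\to\bL'$. The main obstacle throughout is the back-and-forth bookkeeping: one must carefully enumerate pairs of epimorphisms and choose the indices at each step large enough that the finitely many commutativity constraints accumulated so far can all be simultaneously respected after the next application of \ref{AP}. This is purely combinatorial once the amalgamation property is in hand, but requires sufficient care to guarantee that the inductively constructed partial isomorphisms actually assemble into continuous, structure-preserving maps on the inverse limit.
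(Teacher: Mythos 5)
This theorem is cited from Irwin and Solecki~\cite{IS:PFL} and is not proved in the paper under review, so I am comparing your attempt against the standard argument from their work. Your overall plan is the right one and matches theirs: enumerate $\F$, build an inverse sequence in $\F$ by dovetailing joint-projection and amalgamation steps, pass to the inverse limit (which is compact, zero-dimensional and second countable because each $B_n$ is finite discrete), verify \ref{it:L1}--\ref{it:L3}, and prove uniqueness by a back-and-forth between two such inverse sequences. Your treatment of \ref{it:L1} and \ref{it:L2} is fine.

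The concrete gap is in the amalgamation step. The richness property the sequence $(\B_n,\pi_n^m)$ must satisfy --- what Irwin and Solecki build into a \emph{fundamental sequence} --- is: for every $n$, every $\D\in\F$ and every epimorphism $\phi\colon\D\to\B_n$, there exist $m>n$ and an epimorphism $\psi\colon\B_m\to\D$ with $\phi\psi=\pi_n^m$. This is precisely what drives \ref{it:L3}, the Extension Property \ref{it:L4} and the uniqueness back-and-forth: after using \ref{AP} to amalgamate a problem over some $\A$, one must be able to lift the resulting epimorphism onto a $\B_n$ back up into the sequence. To obtain it, at the stage $k\geq n$ where you handle $(\D,\phi)$ you apply \ref{AP} to $\phi\colon\D\to\B_n$ and the \emph{bonding map} $\pi_n^k\colon\B_k\to\B_n$, getting $\C$ with epimorphisms $\chi_1\colon\C\to\D$ and $\chi_2\colon\C\to\B_k$, and you must set $\B_{k+1}:=\C$ and $\pi_k^{k+1}:=\chi_2$ (the arm over $\B_k$) so that $\chi_1$ is the required lift. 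Your step instead applies \ref{AP} to a pair of epimorphisms $\phi_1,\phi_2\colon\B_n\to\A$ that both \emph{leave} $\B_n$; this produces two epimorphisms $\B_{n+1}\to\B_n$ but does not by itself yield the lifting property, and it leaves ambiguous which of the two should serve as the new bonding map --- a choice that matters. Your version can be repaired (first a projection step onto $\D$ to obtain $\rho\colon\B_{m'}\to\D$, then an amalgamation step for the pair $\phi\rho,\pi_n^{m'}\colon\B_{m'}\to\B_n$, taking the arm over the bonding map as the new bonding map), but as written the construction does not clearly give the fundamental-sequence property on which your sketches of \ref{it:L3} and uniqueness tacitly rely. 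Relatedly, in \ref{it:L3} what is built is a compatible system of \emph{epimorphisms} between stages, not ``partial isomorphisms $\B_{n_k}\to\B_{m_k}$'' (these would be full isomorphisms of finite structures); it is exactly the lifting property above that lets you extend such an epimorphism at each stage of the back-and-forth.
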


 In the above, a family is understood to be countable if it contains only countably many structures up to isomorphism.
 This clearly holds when $\sigma$ is finite.

 There is a further property that every projective Fra{\"\i}ss\'e limit possesses, which can be used as an
 alternative for \ref{it:L3}:

\begin{enumerate}[label={(L\arabic*)}]
\setcounter{enumi}{3}
\item
\label{it:L4}
(Extension Property)
For any $\A,\B\in\F$, and any epimorphisms $\phi_1\colon \bL\rightarrow \A$ and $\phi_2\colon \B\rightarrow \A$,
there exists an epimorphism $\psi\colon \bL\rightarrow \B$ such that $\phi_1=\phi_2\psi$.

\begin{center}
\begin{tikzcd}
\bL \arrow[dotted]{d}{}[swap]{\psi} \arrow{r}{}{\phi_1} & \A \\
\B \arrow{ru}[swap]{\phi_2} &
\end{tikzcd}
\end{center}
\end{enumerate}

\begin{prop}[{\cite[Proposition 2.2.]{Kw:GHC}}]
  \label{pr:EP}
 Let $\F$ be a countable projective Fra{\"\i}ss\'e family. 
 The projective Fra{\"\i}ss\'e limit of $\F$ satisfies \ref{it:L4}. Furthermore, if a structure satisfies
 \ref{it:L1}, \ref{it:L2} and \ref{it:L4}, then it also satisfies \ref{it:L3} and is therefore isomorphic
 to the projective Fra{\"\i}ss\'e limit of $\F$.
\end{prop}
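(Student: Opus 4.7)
The proof splits into two parts. For the first claim, that $\bL$ satisfies (L4), I would argue briefly as follows: given data $\A,\B\in\F$, $\phi_1\colon\bL\to\A$ and $\phi_2\colon\B\to\A$ as in (L4), apply (L1) to obtain an epimorphism $\rho\colon\bL\to\B$, so that $\phi_2\rho\colon\bL\to\A$ is again an epimorphism. By (L3) applied to the pair $\phi_1,\phi_2\rho$, there is an automorphism $\alpha$ of $\bL$ with $\phi_1=\phi_2\rho\alpha$, and $\psi:=\rho\alpha$ is the desired epimorphism witnessing (L4).

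For the second claim, let $\bM$ satisfy (L1), (L2), (L4). The plan is to build an isomorphism $\bM\cong\bL$ by back-and-forth, from which (L3) for $\bM$ follows by transport of structure from (L3) for $\bL$. Since $L$ and $M$ are both compact, zero-dimensional and second countable, fix countable bases of clopen sets $(U_k)_k$ of $L$ and $(V_k)_k$ of $M$. Construct inductively a tower $\A_0\leftarrow\A_1\leftarrow\A_2\leftarrow\cdots$ in $\F$ with connecting epimorphisms $\pi_n\colon\A_{n+1}\to\A_n$, together with epimorphisms $\rho_n\colon\bL\to\A_n$ and $\sigma_n\colon\bM\to\A_n$ satisfying $\rho_n=\pi_n\rho_{n+1}$ and $\sigma_n=\pi_n\sigma_{n+1}$. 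At odd stages indexed by $k$, apply (L2) to the continuous map $L\to A_n\times\{0,1\}$, $x\mapsto(\rho_n(x),\chi_{U_k}(x))$, to factor it through a new $\A_{n+1}\in\F$ with epimorphism $\rho_{n+1}\colon\bL\to\A_{n+1}$; a short check shows that the projection to $\A_n$ is an epimorphism in $\F$, yielding $\pi_n$. Then (L4) applied to $\bM$ lifts $\sigma_n$ along $\pi_n$ to $\sigma_{n+1}\colon\bM\to\A_{n+1}$. Even stages are symmetric: use (L2) on $\bM$ to capture $V_k$ and (L4) on $\bL$ to lift.

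Define $\Phi\colon L\to M$ by $\Phi(x)=y$ iff $\sigma_n(y)=\rho_n(x)$ for every $n$. Well-definedness, bijectivity and bicontinuity of $\Phi$ follow from the fact that the fibres of the $\rho_n$ shrink to points in $L$ and those of the $\sigma_n$ shrink to points in $M$ (forced by the enumeration of the bases $(U_k),(V_k)$), together with compactness; structure preservation is immediate because every $\rho_n,\sigma_n$ is an epimorphism. Hence $\Phi$ is an isomorphism $\bL\to\bM$, and $\bM$ inherits (L3) and coincides, up to isomorphism, with the projective Fra{\"\i}ss{\'e} limit of $\F$. The main obstacle I anticipate is the inductive step in the back-and-forth: one must verify both that factoring through (L2) really produces a structure $\A_{n+1}\in\F$ with an epimorphism $\pi_n\colon\A_{n+1}\to\A_n$ (so that (L4) can be applied on the opposite side), and that in the limit the $\rho_n$ and $\sigma_n$ separate all points. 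Both are routine diagram chases, but they are precisely where (L1), (L2) and (L4) are fully used.
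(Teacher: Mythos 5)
The paper itself does not prove this proposition --- it is stated as a citation to Kwiatkowska \cite[Proposition 2.2]{Kw:GHC} with no accompanying argument --- so there is no in-paper proof to compare against. Your reconstruction is correct and follows the expected route. The first half, deriving \ref{it:L4} from \ref{it:L1} and \ref{it:L3}, is exactly the short argument one wants: projective universality gives some $\rho\colon\bL\to\B$, and projective homogeneity applied to $\phi_1$ and $\phi_2\rho$ produces the corrective automorphism. The second half via a back-and-forth tower that alternately uses \ref{it:L2} to refine through an element of $\F$ capturing the next basic clopen, and \ref{it:L4} on the opposite side to lift the compatible epimorphism, is the standard way to prove such a uniqueness statement and matches the strategy of the cited source. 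You have also correctly identified the two places requiring care. First, the first-coordinate projection $\pi_n$ of the map furnished by \ref{it:L2} really is an epimorphism $\A_{n+1}\to\A_n$: this follows from $\pi_n\rho_{n+1}=\rho_n$ with both $\rho_n$ and $\rho_{n+1}$ epimorphisms, by a short diagram chase. Second, ``structure preservation is immediate because every $\rho_n,\sigma_n$ is an epimorphism'' is a slight understatement: the nontrivial inclusion is $\bigcap_n\rho_n^{-1}(R^{\A_n})\subseteq R^\bL$ (componentwise), which uses that $R^\bL$ is closed in the relevant power of $L$ and that the fibres of the $\rho_n$ shrink to points --- precisely what your enumeration of the bases $(U_k)$, $(V_k)$ is engineered to guarantee --- and not merely that the $\rho_n$ are epimorphisms. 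With those two checks supplied, the argument is complete.
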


\section{Pointwise stabilisers in $\Homeo 2^\omega$}
\label{sec:X0}

 This section consists of the proof of Theorem~\ref{thm:H} by obtaining generic $n$-tuples as projective
 Fra{\"\i}ss\'e limits of certain families as in~\cite{Kw:GHC}.
 For the formulation of our characterization of generics in Theorem~\ref{thm:H2} we need to
 introduce some notation. 

Let $\sigma$ be the signature $\{ s_1,\dots, s_m\}$ consisting of $m$ binary relation symbols.
Following~\cite{Kw:GHC} we say that a binary relation $s$ on a set $A$ is \emph{surjective}
if for every $a\in A$ there exist $b,c\in A$ such that ${(a,b),(c,a)\in s}$.  
An element $a\in A$ is \emph{outgoing} for $s$ if
\[  |\{ b\in A \st (b,a)\in s\}| = 1 \quad \text{and} \quad |\{b\in A \st (a,b)\in s \}| \geq 2. \]
   
We use the following two families of $\sigma$-structures from \cite{Kw:GHC}:
\begin{itemize}
\item
 $\F_0$ consists of all \emph{surjective}  finite topological $\sigma$-structures, i.e.~structures $\A=(A;s_1^\A,\dots,s_m^\A)$
 where all $s_i^\A$ are surjective.
\item
$\F$ consists of all structures in $\F_0$ which additionally satisfy the following two properties:
\begin{enumerate}
\item
every point in $A$ is outgoing for precisely one of the relations $s_1^\A, (s_1^\A)^{-1},\dots, s_m^\A,(s_m^\A)^{-1}$;
\item
if $s$ is any of $s_1,\dots,s_m$ and $(a,b)\in s^\A$, then $a$ is $s^\A$-outgoing or $b$ is $(s^\A)^{-1}$-outgoing.
\end{enumerate}
\end{itemize}

To a $\sigma$-structure $\A$ we can associate a simple graph with vertices $A$ and edges $\{a,b\}$ for any $a,b\in A$ such that
$a\neq b$ and $(a,b)\in s_i^\A$ for some $i\in [m]$.
The connected components of this graph will be referred to as the \emph{connected components} of the structure $\A$.
Note that $\A\in \F$ (resp.\ $\A\in\F_0$) if and only if for every connected component $C$ of $\A$
the induced substructure $\C$ is in $\F$ (resp. in $\F_0$).

We now list the key properties of the families $\F$ and $\F_0$ that we need: 

\begin{prop} \label{pr:Kw} 
The following hold for families $\F,\F_0$ of $\sigma$-structures defined above:
\begin{enumerate}
\item \cite[Theorem 4.6]{Kw:GHC}
\label{it:Kw1}
$\F$ is coinitial in $\F_0$, i.e., for every $\A\in\F_0$ there exist $\B\in\F$ and an epimorphism $\B\to\A$.
\item \cite[cf. Theorem 4.1]{Kw:GHC}
\label{it:Kw2}
$\F$ is a countable projective Fra\"\i ss\'e family.
\item \cite[Theorem 4.7]{Kw:GHC}
\label{it:Kw3}
If $\bL=(\L; s_1^\bL,\dots ,s_m^\bL) $ is the projective Fra{\"\i}ss\'e limit of $\F$, then $\L$ is homeomorphic to the Cantor space, the relations $s_1^\bL,\dots,$ $s_m^\bL$ are graphs of homeomorphisms of $\L$,
and $(s_1^\bL,\dots, s_m^\bL)$ is a generic $m$-tuple for $\Homeo \L$.
\end{enumerate}
\end{prop}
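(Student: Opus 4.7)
The plan is to handle each of the three parts in turn, as each is an independent assertion cited from Kwiatkowska's paper.

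For (1), given $\A\in\F_0$, I would construct $\B\in\F$ by a local refinement: replace each vertex $a\in A$ by a small gadget of auxiliary vertices indexed by the $2m$ directions $s_1^\A,(s_1^\A)^{-1},\dots,s_m^\A,(s_m^\A)^{-1}$, and reroute the edges incident at $a$ through the appropriate vertices of the gadget so that every new vertex is outgoing for exactly one of the $2m$ relations and so that every edge has an outgoing endpoint. Surjectivity is preserved by construction, and the map collapsing each gadget back to its originating vertex is the desired epimorphism $\B\to\A$.

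For (2), countability is automatic because the signature $\sigma$ is finite, so there are only countably many finite $\sigma$-structures up to isomorphism. JPP follows by taking the disjoint union of two structures in $\F$, which lies in $\F_0$ by the componentwise description of $\F$ and $\F_0$, and then applying (1). For PAP, given epimorphisms $\phi_i\colon\A_i\to\B$ in $\F$, I would form the fibred product $\A_1\times_{\B}\A_2$ with componentwise relations; the coordinate projections are surjective because the $\phi_i$ are, so the pullback sits in $\F_0$, and (1) then supplies a refinement inside $\F$ with the required epimorphisms.

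For (3), the underlying space $\L$ is compact, zero-dimensional and second countable directly from the definition of a topological $\sigma$-structure and Theorem~\ref{thm:IrwSol}. Perfectness, and hence homeomorphism with $2^\omega$, follows from a doubling argument inside $\F$: every $\A\in\F$ admits a refinement in $\F$ in which any prescribed point of $A$ has at least two preimages, so by \ref{it:L4} no point of $\L$ is isolated. To see that $s_i^\bL$ is the graph of a homeomorphism, I would push the two defining conditions of $\F$ through the inverse limit to obtain that $s_i^\bL$ is both functional and surjective, hence a continuous bijection on the compact space $\L$, hence a homeomorphism. Finally, genericity of $(s_1^\bL,\dots,s_m^\bL)$ in $(\Homeo\L)^m$ is a Baire category argument powered by \ref{it:L4}: any finite pattern witnessed by the $s_i^\bL$ through an epimorphism $\bL\to\A\in\F$ can be matched by any other realisation of the same pattern via a self-epimorphism of $\bL$, so the diagonal conjugacy orbit of $(s_1^\bL,\dots,s_m^\bL)$ meets every nonempty open set along a dense open subset and is therefore comeagre.

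The main obstacle I expect is part (3). Identifying $\L$ with $2^\omega$ is routine once perfectness is verified, but upgrading each $s_i^\bL$ from a closed surjective relation to the graph of an actual homeomorphism relies on a careful use of the two outgoing conditions that single $\F$ out inside $\F_0$, and the genericity step—though a standard Baire category argument in spirit—is exactly the piece that makes the proposition useful for the rest of the paper, so it deserves the most attention.
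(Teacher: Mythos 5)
The paper does not give its own proof of this proposition; all three parts are cited verbatim from Kwiatkowska's paper \cite{Kw:GHC} (her Theorems 4.6, 4.1 and 4.7). Your proposal attempts to reconstruct the proofs from scratch, which is a genuinely different exercise, and two of your steps in part (2) are incorrect. For \ref{JPP} you take the disjoint union of $\A_1,\A_2\in\F$ and then pass to a coinitial refinement, but this confuses the projective and injective settings: the disjoint union $\A_1\dotcup\A_2$ does \emph{not} admit an epimorphism onto $\A_1$ (you would have to map the $\A_2$ part somewhere homomorphically, which is impossible in general). The correct object is the direct product $\A_1\times\A_2$ with componentwise relations, which does lie in $\F_0$ and whose coordinate projections are epimorphisms; one then applies part (1) to refine into $\F$.

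Your \ref{AP} argument via the fibred product $\A_1\times_\B\A_2$ also fails, because that pullback need not lie in $\F_0$. Concretely, let $\B$ be the star $\{u,v,w\}$ with $s^\B=\{(u,v),(v,u),(u,w),(w,u)\}$, let $\A_1$ be a disjoint union of a $2$-cycle $p_u\leftrightarrow p_v$ and a $2$-cycle $p_u'\leftrightarrow p_w$ with $\phi_1(p_u)=\phi_1(p_u')=u$, $\phi_1(p_v)=v$, $\phi_1(p_w)=w$, and let $\A_2$ be the same but with the roles of $v$ and $w$ swapped ($q_u\leftrightarrow q_w$, $q_u'\leftrightarrow q_v$). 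Both $\phi_i$ are epimorphisms of surjective structures, but in the fibred product the point $(p_u,q_u)$ has no out-edge (its only candidate image $(p_v,q_w)$ is not in the pullback since $v\neq w$), so the relation is not surjective and the pullback is not in $\F_0$. Some additional amalgamation construction is needed here; a simple categorical pullback does not suffice. Your sketches of parts (1) and (3) are plausible in outline but also diverge from Kwiatkowska's actual arguments: she builds coinitial preimages out of disjoint unions of spiral digraphs (a global construction) rather than local vertex gadgets, and in (3) the relations $s_i$ on the finite approximands are deliberately \emph{not} functional (outgoing points have at least two successors), so the passage to a homeomorphism in the limit cannot be justified by "pushing the defining conditions of $\F$ through the inverse limit"; it is a nontrivial limiting argument, as is the dense-$G_\delta$ argument for genericity, which you phrase loosely.
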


Let $\sigma^{(n)}$ denote the expansion of the signature $\sigma$ by $n$ new constant symbols $p_1,\dots,p_n$.
Let $\F_0^{(n)} $ be the class of all $\sigma^{(n)}$-structures $\A=(A; s_1^\A,\dots,s_m^\A,p_1^\A,\dots,p_n^\A)$
such that $(A; s_1^\A\dots,s_m^\A)\in \F_0$ and ${(p_j^\A,p_j^\A)\in s_i^\A}$ for all $i\in [m]$, $j\in [n]$.

For $\A=(A;  s_1^\A,\dots,s_m^\A)\in\F_0$ define
\[ \A^{(n)}:=(A^{(n)}; s_1^{\A^{(n)}},\dots,s_m^{\A^{(n)}},p_1^{\A^{(n)}},\dots,p_n^{\A^{(n)}}) \]
where:
\begin{alignat*}{3}
A^{(n)} & := A\dotcup [n], &\makebox[5mm]{}&\\
s_i^{\A^{(n)}}&:= s_i^\A\cup \Delta_{[n]} && \text{for } i\in [m],\\
p_j^{\A^{(n)}} &:= j && \text{for } j\in [n].
\end{alignat*}
Then
\[ \F^{(n)}:=\{\A^{(n)}\colon \A\in \F\} \subseteq \F_0^{(n)}. \]

We are going to prove the following generalizations of the facts in Proposition~\ref{pr:Kw}:
\begin{itemize}
\item
$\F^{(n)}$ is coinitial in $\F_0^{(n)}$ (Lemma \ref{la:Focoinit}).
\item
$\F^{(n)}$ is a projective Fra{\"\i}ss\'e class, so that it has a projective Fra{\"\i}ss\'e limit
$\bL=(\L;s_1^{\bL},\dots,s_m^\bL,p_1^\bL,\dots,p_n^\bL)$ (Lemma \ref{la:FoPFF}).
\item
$\L$ is homeomorphic to the Cantor space (Lemma \ref{la:LCantor}).
\item
Each $s_i^\bL$ for $i\in[m]$ is the graph of a homeomorphism of $\L$ which fixes all $p_j^\bL$ for $j\in [n]$ (Lemma \ref{la:graphhomeo}).
\item
The orbit of $(s_1^\bL,\dots,s_m^\bL)$ under the diagonal conjugation action  
of $H:=(\Homeo \L)_{(p_1^\bL,\dots,p_n^\bL)}$ is comeager in $H^m$ (Lemmas~\ref{la:dense}, \ref{la:Gdelta}).
\end{itemize}

This immediately yields Theorem~\ref{thm:H}. Additionally we will obtain the following
characterization of generics in $H^m$ as the main result of this section.

\begin{thm} \label{thm:H2}
 Let $m\in\N, n\in\N\cup\{0\}$, let $x_1,\dots,x_n\in 2^\omega$ be distinct,
 and let $H:=\bigl(\Homeo 2^\omega\bigr)_{(x_1,\dots,x_n)}$. 
 For $h_1,\dots,h_m\in H$ the following are equivalent:
\begin{enumerate}
\item \label{it:H1}
 The $m$-tuple $(h_1,\dots,h_m)$ is generic for $H$;
\item \label{it:H2}
  $(2^\omega;h_1,\dots,h_m,x_1,\dots,x_n)$ is isomorphic to the projective Fra{\"\i}ss\'e limit $\bL$ of $\F^{(n)}$
  over the signature $\{s_1,\dots,s_m,p_1,\dots,p_n\}$.
\end{enumerate}
\end{thm}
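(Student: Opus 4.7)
The plan is to deduce this theorem from the bulleted list of lemmas preceding it, together with a standard Baire-category argument. First, Lemmas~\ref{la:LCantor} and~\ref{la:graphhomeo} identify $\L$ with $2^\omega$ and the relations $s_i^\bL$ with graphs of homeomorphisms of $\L$ fixing $p_1^\bL,\dots,p_n^\bL$. Since Cantor spaces with $n$ marked points are unique up to pointed homeomorphism, I fix a homeomorphism $\psi\colon 2^\omega\to\L$ with $\psi(x_j)=p_j^\bL$ for each $j\in[n]$, and set $g_i:=\psi^{-1}\circ s_i^\bL\circ\psi\in H$ for $i\in[m]$. Then $\bL\cong(2^\omega;g_1,\dots,g_m,x_1,\dots,x_n)$ as topological $\sigma^{(n)}$-structures, and by Lemmas~\ref{la:dense} and~\ref{la:Gdelta} the $H$-orbit of $(g_1,\dots,g_m)$ under diagonal conjugation is comeager in $H^m$.

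The key observation is that $(2^\omega;h_1,\dots,h_m,x_1,\dots,x_n)$ is isomorphic (as a topological $\sigma^{(n)}$-structure) to $\bL$ if and only if $(h_1,\dots,h_m)$ lies in the $H$-orbit of $(g_1,\dots,g_m)$. Indeed, any such isomorphism $\phi\colon 2^\omega\to\L$ yields, via $\tau:=\psi^{-1}\phi$, a homeomorphism of $2^\omega$ fixing each $x_j$ (so $\tau\in H$) with $\tau h_i\tau^{-1}=g_i$ for all $i$; conversely, any $\tau\in H$ realising this conjugacy produces an isomorphism $\psi\tau$. With this equivalence in hand, the theorem is immediate. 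Since $H$ is a closed subgroup of the Polish group $\Homeo 2^\omega$, it is itself Polish, so $H^m$ is a Baire space in which no two comeager subsets can be disjoint. Consequently, the (comeager) $H$-orbit of $(g_1,\dots,g_m)$ is the only possible comeager orbit in $H^m$: the tuple $(h_1,\dots,h_m)$ has a comeager orbit if and only if its orbit equals that of $(g_1,\dots,g_m)$, which by the observation above is equivalent to (2).

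I expect the main difficulty to lie not in this final synthesis but in the underlying lemmas, and specifically in the density statement Lemma~\ref{la:dense}: that every non-empty open set in $H^m$ contains a diagonal $H$-conjugate of $(g_1,\dots,g_m)$. This will require an extension-property argument in the spirit of Proposition~\ref{pr:EP} applied to $\bL$, adapted so that every intermediate epimorphism preserves the designated constants $p_1^\bL,\dots,p_n^\bL$; the coinitiality of $\F^{(n)}$ in $\F_0^{(n)}$ (Lemma~\ref{la:Focoinit}) will be the tool allowing one to work inside the projective Fra{\"\i}ss{\'e} class $\F^{(n)}$ while remaining flexible enough about which basic open sets in $H^m$ one can hit.
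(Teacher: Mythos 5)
Your proof is correct and follows essentially the same route as the paper's: identify $\bL$ with $(2^\omega;g_1,\dots,g_m,x_1,\dots,x_n)$ via Lemmas~\ref{la:LCantor} and~\ref{la:graphhomeo}, observe that the existence of an isomorphism $(2^\omega;h_1,\dots,h_m,x_1,\dots,x_n)\cong\bL$ is equivalent to $(h_1,\dots,h_m)$ lying in the $H$-orbit of the tuple coming from $\bL$, and then invoke Lemmas~\ref{la:dense} and~\ref{la:Gdelta}. The only cosmetic difference is that you spell out the Baire-category step (a Polish group acting on a Polish space admits at most one comeager orbit, so "generic'' pins down the orbit of $(g_1,\dots,g_m)$), which the paper leaves implicit; your closing paragraph anticipating difficulty in Lemma~\ref{la:dense} is also consonant with how the paper actually organises the work.
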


 We start with establishing the necessary properties of $\F^{(n)}$.

\begin{lemma}
\label{la:Focoinit}
$\F^{(n)}$ is coinitial in $\F_0^{(n)}$.
\end{lemma}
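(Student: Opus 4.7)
The plan is to reduce to the already-known coinitiality of $\F$ in $\F_0$ (Proposition~\ref{pr:Kw}\eqref{it:Kw1}) by working on the signature reduct and then attaching the constants by hand.

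Given $\A=(A; s_1^\A,\dots,s_m^\A,p_1^\A,\dots,p_n^\A)\in\F_0^{(n)}$, first I would pass to the reduct $\A':=(A;s_1^\A,\dots,s_m^\A)\in\F_0$. By Proposition~\ref{pr:Kw}\eqref{it:Kw1} there exist $\B'\in\F$ and an epimorphism $\phi\colon\B'\to\A'$ of $\sigma$-structures. I then propose $\B:={\B'}^{(n)}\in\F^{(n)}$ as the required structure in $\F^{(n)}$, and define $\psi\colon B'\dotcup[n]\to A$ by
\[
\psi(b):=\phi(b) \quad\text{for } b\in B', \qquad \psi(j):=p_j^\A \quad\text{for } j\in[n].
\]

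Now I would verify that $\psi$ is an epimorphism of $\sigma^{(n)}$-structures. Continuity and surjectivity are immediate (everything is finite discrete and $\phi$ is already surjective). The constants are handled by construction since $p_j^{\B}=j$ and $\psi(j)=p_j^\A$. For each relation symbol $s_i$, using that $\phi$ is an epimorphism and that $(p_j^\A,p_j^\A)\in s_i^\A$ holds for all $i,j$ by the hypothesis $\A\in\F_0^{(n)}$, one gets
\[
\psi\bigl(s_i^{\B}\bigr)=\psi\bigl(s_i^{\B'}\cup\Delta_{[n]}\bigr)=\phi\bigl(s_i^{\B'}\bigr)\cup\bigl\{(p_j^\A,p_j^\A)\st j\in[n]\bigr\}=s_i^\A\cup s_i^\A=s_i^\A,
\]
so $\psi$ is an epimorphism.

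There is no real obstacle here; the only thing to be careful about is that the absorption step in the last equality genuinely requires the defining hypothesis on $\F_0^{(n)}$ that every constant $p_j^\A$ is a self-loop of every relation $s_i^\A$, which is precisely why that condition was built into the definition. Everything else is bookkeeping on top of Proposition~\ref{pr:Kw}\eqref{it:Kw1}.
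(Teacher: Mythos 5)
Your proof is correct and follows essentially the same route as the paper: pass to the $\sigma$-reduct, apply Proposition~\ref{pr:Kw}\eqref{it:Kw1} to get $\B'\in\F$ with an epimorphism onto the reduct, and then extend to $\B'^{(n)}$ by sending each constant $j$ to $p_j^\A$. You merely spell out the verification that the extension is an epimorphism (in particular the absorption step using $(p_j^\A,p_j^\A)\in s_i^\A$), which the paper leaves implicit.
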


\begin{proof}
 For $\A\in\F_0^{(n)}$, let $\A' := (A;s_1^\A,\dots,s_m^\A)$ be the reduct of $\A$ obtained by removing the constant symbols $p_j$ for $j\in [n]$
 from the signature $\sigma^{(n)}$. Note that $\A'\in \F_0$.
By  Proposition~\ref{pr:Kw}\eqref{it:Kw1} there exist $\B\in\F$ and an epimorphism $\phi\colon \B\rightarrow \A'$.
The extension of $\phi$ to $\B^{(n)}\in\F^{(n)}$ setting $ k\mapsto p_k^\A$ for $k\in [n]$ is an epimorphism ${\B^{(n)}\rightarrow \A}$, as required.
\end{proof}

\begin{lemma}
\label{la:FoPFF}
$\F^{(n)}$ is a countable projective Fra{\"\i}ss\'e family.
\end{lemma}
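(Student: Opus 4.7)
The plan is to verify the three defining conditions for a countable projective Fra{\"\i}ss{\'e} family: countability up to isomorphism, the Joint Projection Property \ref{JPP}, and the Projective Amalgamation Property \ref{AP}. Countability is immediate because the signature $\sigma^{(n)}$ is finite.

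For \ref{JPP}, I reduce directly to \ref{JPP} for $\F$ (Proposition~\ref{pr:Kw}\eqref{it:Kw2}): given $\A_1^{(n)}, \A_2^{(n)} \in \F^{(n)}$, obtain $\C \in \F$ with epimorphisms $\psi_i\colon \C \to \A_i$, and extend each to $\bar\psi_i\colon \C^{(n)} \to \A_i^{(n)}$ by setting $\bar\psi_i(k):=k$ for $k\in [n]$. Checking continuity, surjectivity, and preservation of constants and relations is routine, using $s_j^{\A^{(n)}} = s_j^\A \cup \Delta_{[n]}$.

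For \ref{AP}, given epimorphisms $\phi_i\colon \A_i^{(n)} \to \B^{(n)}$ for $i=1,2$, the key observation is that since each $\phi_i$ preserves the constants $k\in[n]$ and since the only pairs of $s_j^{\B^{(n)}}$ meeting $[n]$ are the loops $\Delta_{[n]}$, the preimages $X_i := \phi_i^{-1}(B)$ and $Y_i := \phi_i^{-1}([n])$ are each unions of connected components of $\A_i^{(n)}$, with $X_i \subseteq A_i$ and the induced substructure on $X_i$ lying in $\F$. Hence $\phi_i|_{X_i}\colon X_i \to \B$ is an epimorphism in $\F$, and \ref{AP} for $\F$ yields $\C' \in \F$ together with epimorphisms $\chi_i\colon \C' \to X_i$ satisfying $\phi_1|_{X_1}\chi_1 = \phi_2|_{X_2}\chi_2$. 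The $[n]$-part is handled by hand: the fibres $Y_{i,k} := \phi_i^{-1}(k) \cap Y_i$ are amalgamated over each point $k \in [n]$ by gluing $Y_{1,k}$ and $Y_{2,k}$ along the shared vertex $k$ to form $\C''_k$, with projections $\pi_{i,k}\colon\C''_k\to Y_{i,k}$ that are the identity on $Y_{i,k}$ and collapse $Y_{3-i,k}\setminus\{k\}$ to $k$. Combining gives $\hat\C := \C' \dotcup \C'' \in \F_0^{(n)}$, where $\C''$ is the disjoint union of the $\C''_k$, together with epimorphisms $\psi_i\colon\hat\C \to \A_i^{(n)}$ satisfying $\phi_1\psi_1 = \phi_2\psi_2$ (on $\C'$ by the choice of $\C'$, on each $\C''_k$ because both compositions collapse $\C''_k$ to $k$). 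Finally, Lemma~\ref{la:Focoinit} produces $\C^{(n)} \in \F^{(n)}$ with an epimorphism $\rho\colon\C^{(n)} \to \hat\C$; the composites $\psi_i\rho$ give the required amalgamation in $\F^{(n)}$.

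The main obstacle is \ref{AP}: the ``bottom'' structure $[n]$ with loops lies in $\F_0^{(n)}$ but not $\F^{(n)}$ (the outgoing condition defining $\F$ fails at loop-only points), so \ref{AP} for $\F$ cannot be invoked directly on the whole structure. The decomposition of each $\A_i^{(n)}$ into a $B$-part and an $[n]$-part---forced because the only edges in $\B^{(n)}$ touching $[n]$ are loops---lets \ref{AP} for $\F$ handle the $B$-part, while the $[n]$-part is amalgamated by hand; Lemma~\ref{la:Focoinit} then bridges $\F_0^{(n)}$ back to $\F^{(n)}$.
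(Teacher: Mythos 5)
Your proof is correct, and it is in fact more complete than the paper's own treatment of \ref{AP}. Both you and the paper follow the same basic strategy: split each structure into the preimage of the non-constant part $B$ and the preimage of $[n]$ (both unions of connected components), apply \ref{AP} for $\F$ to the former, and deal with the $[n]$-part separately. The paper (in its notation, with epimorphisms $\phi_i\colon\B_i^{(n)}\to\A^{(n)}$) forms the substructure $\B_i'$ on the universe $B_i' := B_i\setminus\phi_i^{-1}(\{p_1^{\A^{(n)}},\dots,p_n^{\A^{(n)}}\})$, obtains $\psi_i'\colon\C\to\B_i'$ from \ref{AP} for $\F$, and then asserts that $\psi_i := \psi_i'\cup\Delta_{[n]}$ is an epimorphism $\C^{(n)}\to\B_i^{(n)}$. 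This is where you and the paper part ways, and where your version is the right one: the image of $\psi_i$ is $B_i'\cup[n]$, a proper subset of $B_i^{(n)} = B_i\dotcup[n]$ whenever some connected component of $\B_i$ is collapsed onto a single point $j\in[n]$ by $\phi_i$. Such collapses are perfectly possible --- the constant map from any component of $\B_i$ to $j$ is relation-preserving because $(j,j)\in s_i^{\A^{(n)}}$ --- so $\psi_i$ need not be surjective, and $\psi_i(s_i^{\C^{(n)}})$ can miss the edges of $\B_i^{(n)}$ supported on $B_i\setminus B_i'$. Your gluing of the fibres over each $k\in[n]$ into $\C''_k$, the resulting intermediate structure $\hat\C\in\F_0^{(n)}$, and the final descent to $\F^{(n)}$ via Lemma~\ref{la:Focoinit} handle exactly the part that the paper's map misses. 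In short, you have noticed and repaired a gap in the published argument while otherwise following the same route.
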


\begin{proof}
That $\F^{(n)}$ is countable follows from finiteness of signature $\sigma^{(n)}$.
We now check that $\F^{(n)}$ satisfies \ref{JPP} and \ref{AP}.

For \ref{JPP}, letting $\A_1,\A_2\in\F$, use Proposition \ref{pr:Kw}\ref{it:Kw2}
to derive
  $\B\in\F$ and epimorphisms $\B\rightarrow\A_1$ and $\B\rightarrow \A_2$.
  Then extend these epimorphisms to $\B^{(n)}\rightarrow \A_1^{(n)}$ and $\B^{(n)}\rightarrow \A_2^{(n)}$
  by setting $k\mapsto k$ for $k\in [n]$. 

For \ref{AP}, let $\A,\B_1,\B_2\in\F$ and $\phi_i \colon \B_i^{(n)}\rightarrow \A^{(n)}$ be epimorphisms for $i\in [2]$.
We need to show that there exist $\C\in\F$ and epimorphisms $\psi_i\colon \C^{(n)}\rightarrow \B_i^{(n)}$, $i\in [2]$, such that
$\phi_1\psi_1=\phi_2\psi_2$.

Let $i\in [2]$.
Since each $\{p_j^{\A^{(n)}}\}$ is a connected component of $\A^{(n)}$, it follows that $\phi_i^{-1} (p_j^{\A^{(n)}})$
is a union of connected components of $\B_i^{(n)}$.
Hence $B_i':= B_i\setminus \phi_i^{-1}(\{ p_1^{\A^{(n)}},\dots, p_n^{\A^{(n)}}\})$ is a non-empty union of
connected components of $\B_i$. It follows that there exists a substructure $\B_i'$ of $\B_i$
with universe $B_i'$, and that $\B_i'\in \F$.
The restriction $\phi_i'$ of $\phi_i$ to $B_i$ is an epimorphism $\B_i'\rightarrow \A$.

By Proposition \ref{pr:Kw}\ref{it:Kw2}, there exist $\C\in\F$ and epimorphisms $\psi_i'\colon \C\rightarrow \B_i'$ for
$i\in [2]$ such that $\phi_1'\psi_1'=\phi_2'\psi_2'$.
But then $\psi_i:=\psi_i'\cup \Delta_{[n]}$ is an epimorphism $\C^{(n)}\rightarrow \B_i^{(n)}$ and
$\phi_1\psi_1=\phi_2\psi_2$, as required. Hence $\F^{(n)}$ satisfies \ref{AP}, completing the proof of the lemma.
\end{proof}

For the remainder of the paper we denote the projective Fra{\"\i}ss\'e limit of $\F^{(n)}$ by
\[
\bL := (\L; s_1^\bL,\dots, s_m^\bL,p_1^\bL,\dots,p_n^\bL).
\]

\begin{lemma}
\label{la:LCantor}
$\L$ is homeomorphic to the Cantor space.
\end{lemma}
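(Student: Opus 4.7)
The plan is to check that $\L$ is the Cantor space via the standard topological characterisation, i.e., that it is a non-empty, compact, metrisable, zero-dimensional, perfect space. Compactness, zero-dimensionality, and second countability (hence metrisability) are built into the definition of a topological $\sigma$-structure and Theorem~\ref{thm:IrwSol}, and non-emptiness follows since $\bL$ surjects onto any member of $\F^{(n)}$. Thus the entire task reduces to showing that $\L$ has no isolated points. The new subtlety compared with Kwiatkowska's treatment of $\F$ is that in every finite $\A^{(n)}\in\F^{(n)}$ the constants $p_j^{\A^{(n)}}$ are themselves isolated, so one must argue that the fibers of the $p_j^\bL$ in $\L$ nevertheless grow.

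By zero-dimensionality and the factoring property \ref{it:L2}, every clopen subset of $\L$ is of the form $\phi^{-1}(S)$ for some epimorphism $\phi\colon\bL\to\A^{(n)}$ with $\A\in\F$ and $S\subseteq A^{(n)}$. Hence $\ell\in\L$ is isolated precisely when some fiber $\phi^{-1}(\phi(\ell))$ is a singleton, and it suffices to show that for every $\A\in\F$, every epimorphism $\phi\colon\bL\to\A^{(n)}$, and every $a\in A^{(n)}$, the fiber $\phi^{-1}(a)$ has at least two elements. By the extension property \ref{it:L4}, it is enough to exhibit $\B\in\F$ and an epimorphism $\psi\colon\B^{(n)}\to\A^{(n)}$ with $|\psi^{-1}(a)|\geq 2$: the lift $\tilde\psi\colon\bL\to\B^{(n)}$ supplied by \ref{it:L4} is then surjective and forces $|\phi^{-1}(a)|\geq 2$.

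To build such a $\psi$ I would pick a suitable $\C\in\F$ whose universe $C$ is disjoint from $A$, let $\B$ be their disjoint union (which lies in $\F$ by the componentwise characterisation noted just before Proposition~\ref{pr:Kw}), and define $\psi\colon B^{(n)}=A\dotcup C\dotcup[n]\to A^{(n)}=A\dotcup[n]$ to be the identity on $A\cup[n]$ while collapsing $C$ onto the image of $a$. Two cases arise: if $a\in A$, take $\C$ to be a disjoint isomorphic copy of the connected component of $a$ in $\A$ and send $\C$ onto that component via the copy-isomorphism; if $a=p_j^{\A^{(n)}}$, take any non-empty $\C\in\F$ and send every element of $C$ to $j$. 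Relation-preservation is routine in both cases because the target edges are either genuine edges of the component of $a$ or self-loops in $\Delta_{[n]}\subseteq s_i^{\A^{(n)}}$, and in either case $\psi^{-1}(a)$ acquires at least one new preimage from $C$.

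The main (mild) obstacle is the constants case $a=p_j^{\A^{(n)}}$: without it, Kwiatkowska's arguments for $\F$ already suffice. The construction above resolves it essentially for free, exploiting the fact that each $p_j$ carries only self-loops in $\A^{(n)}$ and therefore absorbs arbitrary edges from a collapsed auxiliary component.
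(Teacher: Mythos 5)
Your proposal is correct and follows essentially the same route as the paper's proof: both argue that every fiber of every finite quotient $\phi\colon\bL\to\A^{(n)}$ has at least two elements, by building an enlargement $\B^{(n)}$ (a disjoint union with an extra component mapping onto $a$, either via a copy-isomorphism when $a\in A$ or by collapsing to $j$ when $a=p_j$) and invoking the Extension Property \ref{it:L4}. The only cosmetic difference is that you adjoin a copy of the connected component of $a$ whereas the paper adjoins a copy of all of $\A$; both work identically.
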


\begin{proof}
This closely follows the argument in \cite[Proposition 3.5]{Kw:GHC}.
From the general projective Fra{\"\i}ss\'e  theory we know that $\bL$ is a topological structure and, in particular, that $\L$ is compact, zero-dimensional and second-countable.
It remains to show that $\L$ has no isolated points.

The reader may find it helpful to refer from time to time to Figure~\ref{fig:LCantor},
which shows the structures and mappings featuring in the following argument.
Aiming for a contradiction, suppose that $p$ is an isolated point in $\L$.
This means that $\{p\}$ is open, and, by zero-dimensionality, so is $\L\setminus\{p\}$.
Then the projection $f\colon \L\rightarrow \bigl\{\{p\},\L\setminus\{p\}\bigr\}$ is continuous.
By the Factoring Property \ref{it:L2}, we have $\A^{(n)}\in\F^{(n)}$,
an epimorphism $\phi_1\colon \bL\rightarrow \A^{(n)}$, and a mapping 
$g\colon A\cup [n]\rightarrow \bigl\{\{p\},\L\setminus\{p\}\bigr\}$, such that $g\phi_1=f$.
Let $a_0:=\phi_1(p)\in A\cup [n]$.

Let $\B$ be the disjoint union of two copies of $\A$, say $\B=\A\cup \A'$, with $A'=\{ a'\colon a\in A\}$.
Since $\A\in\F$ we also have $\B\in \F$.
Define a homomorphism
\[ \phi_2\colon \B^{(n)}\rightarrow \A^{(n)},\ x\mapsto \begin{cases} x & \text{if } x\in A\cup [n], \\
   a_0 &\text{if } x\in A'\text{ and } a_0\in [n], \\                                                         a &\text{if } x=a'\in A' \text{ and } a_0\in A.\end{cases}
\]

Regardless of whether $a_0$ is in $[n]$ or in $A$,
the element $a_0$ has at least two preimages under $\phi_2$.
Hence $p$ has at least two preimages under $g\phi_2$.
By the Extension Property \ref{it:L4}, there is an epimorphism $\psi\colon \bL\to \B^{(n)}$ such that
$\phi_1 = \phi_2\psi$.
It now follows that $p$ has at least two preimages under $g\phi_2\psi=f$.
This is a contradiction, as $f^{-1}(p)=\{p\}$ by assumption, proving  that $\L$ has no
isolated points, as required.
\begin{figure} 
\begin{center}
\begin{tikzcd}
\bL  \arrow{r}{}{f} \arrow{rd}{}[swap]{\phi_1} \arrow{d}{}[swap]{\psi} & \bigl\{ \{p\}, L\setminus\{L\} \bigr\} \\
\B^{(n)} \arrow{r}{}[swap]{\phi_2}& \A^{(n)} \arrow{u}[swap]{g} 
\end{tikzcd}
\end{center}
\caption{Structures and mappings in the proof of Lemma \ref{la:LCantor}.}
\label{fig:LCantor}
\end{figure}
\end{proof}

\begin{lemma}
  \label{la:clcoco}
 Let $i\in [n]$ and let $b$ be a clopen of $\L$ such that $p_i^\bL\in b$. Then
 there exists a clopen $c$ such that $p_i^\bL\in c\subseteq b$ and $c$ is a union of connected components of $\bL$.
\end{lemma}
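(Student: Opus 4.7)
The plan is to apply the Factoring Property \ref{it:L2} to the indicator function of $b$, and then exploit a key structural feature of the class $\F^{(n)}$: in any $\A^{(n)} \in \F^{(n)}$, the only $s_i^{\A^{(n)}}$-related pair involving a constant $j \in [n]$ is the loop $(j,j)$, so $\{j\}$ is a singleton connected component of $\A^{(n)}$. Consequently, the preimage of $\{j\}$ under any epimorphism out of $\bL$ will automatically be closed under graph adjacency.

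Concretely, I would consider the continuous indicator function $f\colon \L \to \{0,1\}$ with $f^{-1}(1) = b$. Property \ref{it:L2} produces $\A^{(n)} \in \F^{(n)}$, an epimorphism $\phi\colon \bL \to \A^{(n)}$, and a map $g\colon A^{(n)} \to \{0,1\}$ with $g\phi = f$. Since epimorphisms preserve constants, $\phi(p_i^\bL) = i$, and $p_i^\bL \in b$ then forces $g(i) = 1$. I would set $c := \phi^{-1}(i)$. This is clopen (preimage of a singleton in the finite discrete set $A^{(n)}$) and contains $p_i^\bL$. Moreover, $c \subseteq b$: for every $x \in c$ one has $f(x) = g(\phi(x)) = g(i) = 1$, so $x \in b$.

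The only step requiring genuine justification, and where the structure of $\F^{(n)}$ enters, is that $c$ is a union of connected components of $\bL$. Suppose $x \in c$ and $\{x, y\}$ is an edge of the underlying simple graph of $\bL$, witnessed, say, by $(x, y) \in s_j^\bL$ with $x \neq y$. The homomorphism property of $\phi$ gives
\[
(i, \phi(y)) = (\phi(x), \phi(y)) \in s_j^{\A^{(n)}} = s_j^\A \cup \Delta_{[n]}.
\]
Since $s_j^\A \subseteq A^2$ and $i \notin A$ (by the disjointness in $A^{(n)} = A \dotcup [n]$), the pair cannot lie in $s_j^\A$, so $(i, \phi(y)) \in \Delta_{[n]}$, forcing $\phi(y) = i$ and $y \in c$. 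The case $(y, x) \in s_j^\bL$ is symmetric. Thus the graph-neighbourhood of any point of $c$ is contained in $c$, so $c$ is a union of connected components of $\bL$, completing the proof. I expect no substantial obstacle; the argument is essentially a one-shot application of \ref{it:L2} combined with the observation about isolated constants.
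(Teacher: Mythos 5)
Your proof is correct, and it is in fact a genuine streamlining of the paper's argument. Both proofs begin identically: apply the Factoring Property \ref{it:L2} to the two-valued continuous map determined by $b$, obtaining $\A^{(n)}\in\F^{(n)}$, an epimorphism $\phi\colon\bL\to\A^{(n)}$, and a factorisation $f=g\phi$. Where you diverge is that you observe that $c:=\phi^{-1}(i)$ already does everything required, exploiting the fact that by construction $s_j^{\A^{(n)}}=s_j^\A\cup\Delta_{[n]}$ with $A\cap[n]=\emptyset$, so $\{i\}$ is a singleton connected component of $\A^{(n)}$, and preimages of unions of connected components under homomorphisms are again unions of connected components; your verification of this via edges is correct and symmetric in both orientations. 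The paper instead builds an auxiliary structure $\B\in\F_0^{(n)}$ by adjoining a fresh element $0$ and redeclaring $p_i^\B:=0$, invokes coinitiality (Lemma~\ref{la:Focoinit}) to produce $\C^{(n)}\in\F^{(n)}$ mapping onto $\B$, and then applies the Extension Property \ref{it:L4} to obtain $\psi_3\colon\bL\to\C^{(n)}$ compatible with $\phi$, finally taking $c:=\psi_3^{-1}\psi_2^{-1}(0)$, which is a subset of your $\phi^{-1}(i)$. Since $\{i\}$ is already an isolated singleton in $\A^{(n)}$, the detour through $\B$, $\C^{(n)}$, coinitiality, and \ref{it:L4} is avoidable, and you have correctly identified the shortcut: your proof uses only \ref{it:L2}, is shorter, and establishes the stronger conclusion $c=\phi^{-1}(i)$. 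Your justification that $g(i)$ equals the $b$-block (via $\phi(p_i^\bL)=p_i^{\A^{(n)}}=i$, epimorphisms preserving constants) and hence $c\subseteq b$ is also correct. I see no gap.
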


\begin{proof}
 Figure \ref{fig:clcoco} shows the structures and mappings featuring in the argument below.
 Since both $b$ and $\L\setminus b$ are open, the projection $f\colon \L\rightarrow \{ b,\L\setminus b\}$
 is continuous.
 By the Factoring Property \ref{it:L2}, there exist $\A^{(n)}\in \F^{(n)}$, an epimorphism $\phi_1\colon\bL\rightarrow \A^{(n)}$,
 and a (necessarily surjective) mapping $g\colon A\cup [n]\rightarrow  \{ b,\L\setminus b\}$ such that
 $f = g\phi_1$.
 Note that the partition $\{ \phi_1^{-1}(a)\colon a\in A\cup [n]\}$ is a refinement of the partition
 $\{ b,\L\setminus b\}$.
 Now define a new structure $\B\in\F_0^{(n)}$ to be an extension of $\A^{(n)}$ by a new element $0$ with the constant
 $p_i^\B$ redefined as $0$, that is,
\begin{align*}
B &:= A\dotcup\{0,\dots,n\}, \\
s_j^\B &:= s_j^{\A^{(n)}}\cup \{(0,0)\} \text{ for } j\in [m],\\
  p_j^\B &:= \begin{cases} 0 & \text{if } j=i, \\
               j &\text{if } j\in [n]\setminus\{i\}. \end{cases}
\end{align*}
 Note that
\[
\psi_1\colon \B\rightarrow \A^{(n)},\ x\mapsto \begin{cases} i & \text{if } x=0, \\ x & \text{else}, \end{cases}    
\]
 is an epimorphism.
 By coinitiality (Lemma~\ref{la:Focoinit})
 there exist $\C^{(n)}\in\F^{(n)}$ and an epimorphism $\psi_2\colon \C^{(n)}\rightarrow \B$.
 The Extension Property \ref{it:L4} for $\A^{(n)},\C^{(n)}\in\F^{(n)}$, $\phi_1\colon \bL\rightarrow \A^{(n)}$ and
 $\psi_1\psi_2\colon \C^{(n)}\rightarrow \A^{(n)}$ yields a homomorphism $\psi_3\colon \bL\rightarrow \C^{(n)}$
 such that $\psi_1\psi_2\psi_3=\phi_1$.

We claim that $c:=\psi_3^{-1}\psi_2^{-1}(0)$ has the required properties.
Indeed:
\begin{itemize}
\item
$c$ is clopen because $\psi_2\psi_3$ is continuous;
\item
$c$ is a union of connected components of $\bL$ because $\{0\}$ is a connected component in $\B$ and $\psi_2\psi_3$  is a homomorphism;
\item
 $c\subseteq b$ because the preimages under $\psi_3$ are a refinement of the partition $\{b,L\setminus b\}$,
 and $f(p_i^\bL)= b$.\qedhere
\end{itemize}
\begin{figure} 
\begin{center}
\begin{tikzcd}
\bL  \arrow{rr}{}{f} \arrow{rrd}{\phi_1} \arrow{d}{}[swap]{\psi_3} && \bigl\{ b, L\setminus b \bigr\} \\
\C^{(n)} \arrow{r}{}[swap]{\psi_2} & \B \arrow{r}{}[swap]{\psi_1}& \A^{(n)} \arrow{u}[swap]{g} 
\end{tikzcd}
\caption{Structures and mappings in the proof of Lemma \ref{la:clcoco}.}
\label{fig:clcoco}
\end{center}
\end{figure}
\end{proof}

\begin{lemma}
\label{la:MLk}
Let $\M$ be a non-empty clopen union of connected components of $\bL$
such that $\M\cap \{p_1^\bL,\dots,p_n^\bL\} = \{p_{j_1}^\bL,\dots,p_{j_k}^\bL\}$ 
with $0\leq k\leq n$ and $j_1,\dots, j_k$ distinct.
 Then the structure 
 \[
\bM:=(\M; s_1^\bL\restr_\M,\dots, s_m^\bL\restr_\M, p_{j_1}^\bL,\dots,p_{j_k}^\bL)
 \]
 is isomorphic to 
the projective Fra{\"\i}ss\'e limit of $\F^{(k)}$.
\end{lemma}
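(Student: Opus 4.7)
The plan is to verify that $\bM$ satisfies \ref{it:L1}, \ref{it:L2}, and \ref{it:L4} for the family $\F^{(k)}$, and then invoke Proposition~\ref{pr:EP}. Throughout write $\M^c:=\L\setminus\M$ and $\{i_1,\dots,i_{n-k}\}:=[n]\setminus\{j_1,\dots,j_k\}$. For any $\D\in\F^{(n)}$ and any $B\subseteq D$ that is a union of components of $\D$ with $B\cap[n]=\{j_1,\dots,j_k\}$, let $\B_0^{(k)}$ denote the $\sigma^{(k)}$-structure $(B;s_1^\D\restr_B,\dots,s_m^\D\restr_B,j_1,\dots,j_k)$; after the relabeling $j_i\mapsto i$ this becomes an element of $\F^{(k)}$.

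The whole argument rests on a single technical observation: if $\gamma\colon\bL\to\D$ is an epimorphism onto some $\D\in\F^{(n)}$ and $B\subseteq D$ satisfies $\gamma^{-1}(B)=\M$, then automatically $B$ is a union of components of $\D$, $B\cap[n]=\{j_1,\dots,j_k\}$, and $\gamma|_\M\colon\bM\to\B_0^{(k)}$ is an epimorphism of $\sigma^{(k)}$-structures. The first part follows by a short chase using surjectivity of $\gamma$ on the relations together with the fact that $\M$ is a union of components of $\bL$; the second is immediate from $\gamma(p_j^\bL)=j$ and $p_j^\bL\in\M$ if and only if $j\in\{j_1,\dots,j_k\}$; the third is routine.

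Using this, \ref{it:L2} for $\bM$ is almost immediate: extend a given continuous $f\colon\M\to A$ to $\tilde f\colon\L\to A\dotcup\{*\}$ by sending $\M^c\mapsto *$ (continuous since $\M$ is clopen), apply \ref{it:L2} for $\bL$ to factor $\tilde f=\tilde g\gamma$ through some $\gamma\colon\bL\to\D\in\F^{(n)}$, and set $B:=\tilde g^{-1}(A)$. For \ref{it:L4}, given $\A,\B\in\F^{(k)}$ and epimorphisms $\phi_1\colon\bM\to\A$ and $\phi_2\colon\B\to\A$, apply the same extension-and-factoring trick to $\phi_1$ (viewed as a continuous map to the finite discrete set $A$) to obtain $\gamma\colon\bL\to\D$ and a factorisation $\phi_1=h\,\gamma|_\M$ for some epimorphism $h\colon\B_0^{(k)}\to\A$ in $\F^{(k)}$. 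The Amalgamation Property of $\F^{(k)}$ (which transfers from AP of $\F$ by the same argument as in Lemma~\ref{la:FoPFF}) supplies $\mathbb{E}\in\F^{(k)}$ with epimorphisms $\alpha\colon\mathbb{E}\to\B_0^{(k)}$ and $\beta\colon\mathbb{E}\to\B$ satisfying $h\alpha=\phi_2\beta$. Form $\widetilde{\mathbb{E}}^{(n)}\in\F^{(n)}$ by adjoining to $\mathbb{E}$ the components of $\D$ lying outside $B$, and build a canonical epimorphism $\tilde\alpha\colon\widetilde{\mathbb{E}}^{(n)}\to\D$ that is $\alpha$ on the $\mathbb{E}$-part and the identity on the adjoined part; by design $\tilde\alpha^{-1}(B)$ is precisely the $\mathbb{E}$-part of $\widetilde{\mathbb{E}}^{(n)}$. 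Apply \ref{it:L4} for $\bL$ to $\gamma$ and $\tilde\alpha$ to obtain $\tilde\theta\colon\bL\to\widetilde{\mathbb{E}}^{(n)}$ with $\gamma=\tilde\alpha\tilde\theta$; the technical observation applied to $\tilde\theta$ then yields an epimorphism $\psi'\colon\bM\to\mathbb{E}$ with $\alpha\psi'=\gamma|_\M$, and $\psi:=\beta\psi'$ satisfies $\phi_2\psi=h\alpha\psi'=h\,\gamma|_\M=\phi_1$.

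Property \ref{it:L1} then follows routinely: \ref{it:L2} produces at least one epimorphism $\bM\to\B_0$ for some $\B_0\in\F^{(k)}$; JPP of $\F^{(k)}$ (also transferred from $\F$ as in Lemma~\ref{la:FoPFF}) yields $\mathbb{E}\in\F^{(k)}$ with epimorphisms onto both $\B_0$ and any given $\A$; and \ref{it:L4} for $\bM$ lifts $\bM\to\B_0$ through $\mathbb{E}\to\B_0$ to give $\bM\to\mathbb{E}\to\A$. The main obstacle I anticipate is the bookkeeping around the relabeling $j_i\mapsto i$ identifying $\B_0^{(k)}$ with an element of $\F^{(k)}$, and the construction of $\tilde\alpha$ in \ref{it:L4} ensuring that $\tilde\alpha^{-1}(B)$ is exactly the $\mathbb{E}$-part of $\widetilde{\mathbb{E}}^{(n)}$, which is what forces $\tilde\theta$ to restrict to an epimorphism from $\bM$ to $\mathbb{E}$; once these pieces are in place, the three property verifications reduce essentially to diagram chasing.
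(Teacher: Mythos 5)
Your overall strategy matches the paper's: verify \ref{it:L1}, \ref{it:L2} and \ref{it:L4} for $\bM$ with respect to $\F^{(k)}$ and invoke Proposition~\ref{pr:EP}. Your ``technical observation'' is correct and is a tidy way to package the recurring argument. Two of your three verifications genuinely differ from the paper's: you derive \ref{it:L1} formally from \ref{it:L2}, \ref{JPP} and \ref{it:L4} rather than building an epimorphism $\bM\to\A^{(k)}$ by hand (which is cleaner, and would shorten the paper's proof), and you handle \ref{it:L4} by factoring $\phi_1$, amalgamating in $\F^{(k)}$ and then lifting through $\bL$, whereas the paper simply extends $\phi_1$ and $\phi_2$ to epimorphisms $\phi_1'\colon\bL\to\A^{(n)}$ and $\phi_2'\colon\B^{(n)}\to\A^{(n)}$ and applies \ref{it:L4} for $\bL$ directly --- the paper's route is more economical.

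However, there is a genuine gap in your proof of \ref{it:L2}, and it propagates to your derivation of \ref{it:L1}. After extending $f$ to $\tilde f$ and factoring $\tilde f=\tilde g\gamma$ with $\gamma\colon\bL\to\D\in\F^{(n)}$ and setting $B:=\tilde g^{-1}(A)$, you assert (in the set-up) that $\B_0^{(k)}$, after relabelling, lies in $\F^{(k)}$. This requires $B\setminus[n]\neq\emptyset$, but the factoring property for $\bL$ gives you no control over $\gamma$: when $k>0$ it is entirely possible that $\gamma$ collapses all of $\M$ onto the constant singletons, so that $B=\{j_1,\dots,j_k\}$ and the $\F$-part of $\B_0^{(k)}$ is empty. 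Members of $\F^{(k)}$ are of the form $\A^{(k)}$ with $\A\in\F$ nonempty (the empty structure cannot be in $\F$, or \ref{JPP} would fail), so in this degenerate case $\B_0^{(k)}\notin\F^{(k)}$ and the factorisation you obtain is not through a member of $\F^{(k)}$. The paper addresses exactly this: it observes that $\psi(\M)\cap B=\emptyset$ can occur only when $k\neq 0$, picks an arbitrary $\C\in\F$, passes to $(\B\dotcup\C)^{(n)}$ via the Extension Property for $\bL$, and thereby forces a nonempty $\F$-part for the image of $\M$. Your proof needs the same supplementary step. (Your \ref{it:L4} verification, by contrast, is not affected: there $\phi_1$ is an epimorphism onto some $\A\in\F^{(k)}$ whose $\F$-part is nonempty, and surjectivity of $h$ then rules out $B\setminus[n]=\emptyset$.)

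One further small point: the paper does not use the crude extension to $A\dotcup\{*\}$; instead it extends $f_0$ to a map $f\colon\L\to A\dotcup\{k+1,\dots,n\}$ with $f(p_i^\bL)=i$ for $i\in\{k+1,\dots,n\}$, using Lemma~\ref{la:clcoco} to partition $\M^c$ into component-unions near each $p_i^\bL$. This keeps the filtering points separated in the image and is what makes the subsequent bookkeeping (and the degenerate-case fix) manageable; it is worth adopting in your write-up as well.
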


\begin{proof}
Without loss of generality we can assume that $j_t=t$ for all $t\in [k]$.
We will show that the structure $\bM$ satisfies 
Projective Universality \ref{it:L1}, the Factoring Property \ref{it:L2} and the Extension Property \ref{it:L4}
with respect to the family $\F^{(k)}$, and the lemma will follow by an application of Proposition~\ref{pr:EP}.

\noindent
\textbf{Projective Universality \ref{it:L1}.}
The structures and mappings featuring in this part of the proof are shown in Figure \ref{fig:PUMLk}.
\begin{figure} 
\begin{center}
\begin{tikzcd}
& \bL  \arrow{r}{}{\phi_1} \arrow{rdd}{\psi_5} \arrow{d}{}[swap]{\psi_1} \arrow{ld}{}[swap]{\phi_2}& \A^{(n)} \\
\P & \B^{(n)} \arrow{d}{}[swap]{\phi_3} \arrow{l}{}[swap]{\psi_2} & \\
 & \C^{(n)} \arrow{lu}{\phi_4} & \D^{(n)} \arrow{l}{\psi_4} \arrow{uu}{}[swap]{\psi_3}
\end{tikzcd}
\end{center}
\caption{Structures and mappings in the proof of Projective Universality in Lemma \ref{la:MLk}.}
\label{fig:PUMLk}
\end{figure}
Let $\A\in\F(=\F^{(0)})$. We will show that there exists an epimorphism $\bM\to\A^{(k)}$.
For this note that $\A^{(n)}\in\F^{(n)}$ contains $\A^{(k)}$ as a substructure of a reduct.
By the Projective Universality of $\bL$ there exists an epimorphism $\phi_1\colon \bL\rightarrow \A^{(n)}$.

Let 
\[
\P:= \bigl(\{0,\dots,n\};\Delta_P,\dots,\Delta_P,1,\dots,n\bigr)\in\F_0^{(n)}.
\]
We claim that there exists an epimorphism $\phi_2\colon \bL\rightarrow \P$ such that $\phi_2^{-1}(\{0,\dots,k\})=\M$.
To see this, let $\{ b_i\colon i\in\{0,\dots,n\}\}$ be a partition of $\L$ into non-empty clopens such that
$p_i^\bL\in b_i$ for all $i\in [n]$ and $\bigcup_{i=0}^k b_i=\M$.
By Lemma~\ref{la:clcoco}, for each $i\in [n]$ there exists a clopen $c_i$ such that $p_i^\bL\in c_i\subseteq b_i$
and $c_i$ is a union of connected components of $\bL$.
Without loss of generality we may assume that $\bigcup_{i=k+1}^n c_i = \L\setminus\M$; if not, we just replace $c_n$
by $\L\setminus \bigl(\M\cup\bigcup_{i=k+1}^{n-1} c_{i}\bigr)$, which is still a union of connected components since
$\M$ is. Also, define $c_0:= \M\setminus \bigcup_{i=1}^k c_i$,
and note that $\emptyset\neq b_0\subseteq c_0$ and $\bigcup_{i=0}^k c_i=\M$.
Since all $c_i$ are unions of connected components, the mapping
\[
\phi_2\colon \L\rightarrow \{0,\dots,n\},\ x\mapsto i \text{ for } x\in c_i,
\]
is the claimed epimorphism $\bL\rightarrow \P$.  

By the Factoring Property \ref{it:L2} for $\bL$ there exist $\B^{(n)}\in\F^{(n)} $,
an epimorphism $\psi_1\colon \bL\rightarrow \B^{(n)}$, and a function $\psi_2\colon B^{(n)}\rightarrow P$ such that
$\phi_2=\psi_2\psi_1$.
In fact, it is easy to see that $\psi_2$ is also an epimorphism, because $\phi_2$ and $\psi_1$ are.

For every $i\in [n]$ we have $\psi_2^{-1}(i)=\{ i\}\dotcup B_i$ with $B_i$ either empty or else the universe of a
structure $\B_i\in \F$. Likewise, $C:=\psi_2^{-1}(0)$, which contains $0$, is the universe of a structure $\C\in\F$.

It is routine that
\begin{alignat*}{2}
\phi_3&\colon \B^{(n)}\rightarrow \C^{(n)},\ & x&\mapsto\begin{cases} x&\text{if } x\in C, \\ i&\text{if } x\in \{i\}\cup B_i,\end{cases}
\\
\phi_4&\colon \C^{(n)}\rightarrow \P,& x&\mapsto \begin{cases} 0 & \text{if } x\in C, \\ i & \text{if } x=i,   \end{cases}
\end{alignat*}
are epimorphisms and that $\psi_2 = \phi_4\phi_3$.

Now, the structures $\A$ and $\C$ are both in $\F$, and we can use \ref{JPP} to obtain $\D\in\F$ and epimorphisms
$\D\rightarrow \A$ and $\D\rightarrow \C$. These epimorphisms can be extended with the identity on $[n]$
to epimorphisms $\psi_3\colon \D^{(n)}\rightarrow \A^{(n)}$ and $\psi_4\colon\D^{(n)}\rightarrow \C^{(n)}$.
By Projective Universality \ref{it:L1}, there exists an epimorphism $\psi_5\colon\bL\rightarrow \D^{(n)}$.

 Now $\phi_3\psi_1$ and $\psi_4\psi_5$ both are epimorphisms $\bL\rightarrow \C^{(n)}$.
 By Projective Homogeneity \ref{it:L3} for $\bL$, there exists $\alpha\in \Aut\bL$ such that
 $\phi_3\psi_1=\psi_4\psi_5\alpha$.
 Since $\psi_3^{-1}(i) = \{ i \} = \psi_4^{-1}(i)$ for all $i\in [n]$,
 we see that 
\begin{align*}
\alpha^{-1}\psi_5^{-1}\psi_3^{-1}([n]\setminus [k]) 
 &= \alpha^{-1}\psi_5^{-1}([n]\setminus [k])=\alpha^{-1}\psi_5^{-1}\psi_4^{-1}([n]\setminus [k])\\
 &= \psi_1^{-1}\phi_3^{-1}([n]\setminus [k])=\L\setminus \M.
\end{align*}
 Therefore
 $(\psi_3\psi_5\alpha)^{-1}(A\cup[k])=\M$, and so $\psi_3\psi_5\alpha$ restricts to an epimorphism
 $\bM\rightarrow \A^{(k)}$.

\noindent
\textbf{Factoring property \ref{it:L2}.}
The structures and mappings featuring in this part of the proof are shown in Figure \ref{fig:FPMLk}.
\begin{figure} 
\begin{center}
\begin{tikzcd}
 \bM \arrow[d, phantom, sloped, "\subseteq"]  \arrow{r}{}{f_0} & A  \arrow[d, phantom, sloped, "\subseteq"]\\
\bL  \arrow{r}{}{f} \arrow{rd}{}[swap]{\psi} \arrow{d}{}[swap]{\psi_1} & A\dotcup\{k+1,\dots,n\} \\
 (\B\dotcup\C)^{(n)} \arrow{r}{}[swap]{\phi} & \B^{(n)} \arrow{u}{}{g} \\
\end{tikzcd}
\end{center}
\caption{Structures and mappings in the proof of the Factoring Property in Lemma \ref{la:MLk}.}
\label{fig:FPMLk}
\end{figure}
Let $A$ be a finite discrete space and $f_0\colon \M\rightarrow A$ be a continuous map.
Let $f\colon \L\rightarrow A\dotcup\{k+1,\dots,n\}$
be any continuous extension of $f_0$
satisfying
\[
f(\L\setminus \M)=\{k+1,\dots,n\} \quad\text{and}\quad
f(p_i^\bL)=i\text{ for } i\in \{k+1,\dots,n\}.
\]
Such an $f$ exists by using Lemma \ref{la:clcoco} to partition $\L\setminus \M$ into clopens
$b_{k+1},\dots,b_{n}$ such that $b_i$ is a union of connected components and $p_i^\bL\in b_i$ for all
$i\in\{k+1,\dots,n\}$.
By \ref{it:L2} for $\bL$, there exist $\B^{(n)}\in\F^{(n)}$,
an epimorphism $\psi\colon \bL\rightarrow \B^{(n)}$, and a mapping
$g\colon B\cup [n]\rightarrow A\cup\{k+1$, $\dots,n\}$ such that
$g\psi=f$.
It immediately follows that $\psi(\M)$ is a union of connected components of $\B^{(n)}$
and that $\psi(\M) \cap [n] = [k]$.

If $\psi(\M)\cap B\neq \emptyset$, then $\psi(\bM)\in\F^{(k)}$ and 
\[
(g\restr_{\psi(\M)})(\psi\restr_\M)=f\restr_\M=f_0
\]
 gives the required property for \ref{it:L2}.

Consider now the case where $\psi(\M)\cap B=\emptyset$. Note that this can happen only when $k\neq 0$.
In that case, let $\C\in\F$ be arbitrary. The structure $(\B\dotcup\C)^{(n)}$ maps homomorphically onto
$\B^{(n)}$ via 
\[
\phi\colon x\mapsto\begin{cases} x & \text{if } x\in B^{(n)},\\ 1&\text{if } x\in C. \end{cases}
\]
By the Extension Property for $\bL$, there exists a homomorphism
$\psi_1\colon \bL\rightarrow (\B\dotcup\C)^{(n)}$ such that $\phi\psi_1=\psi$.
Now $C\subseteq \psi_1(\M)$ and the previous case can be applied with $\psi_1$ and $(\B\dotcup\C)^{(n)}$
instead of $\psi$ and $\B^{(n)}$.

\noindent
\textbf{Extension Property \ref{it:L4}.}
 The structures and mappings featuring in this part of the proof are shown in Figure \ref{fig:EPMLk}.
\begin{figure} 
\begin{center}
\begin{tikzcd}
  \bM \arrow{rr}{}{\phi_1} \arrow{rdd}{}{} &&  \A^{(k)} \\
  \bL \arrow[u, phantom, sloped, "\supseteq"] \arrow{rr}{}{\phi'_1} \arrow{rdd}{}[swap]{\psi} &&  \A^{(n)} \arrow[u, phantom, sloped, "\supseteq"] \\
 & \B^{(k)} \arrow{ruu}{}{\phi_2} & \\
 & \B^{(n)} \arrow[u, phantom, sloped, "\supseteq"] \arrow{ruu}{}[swap]{\phi'_2} & \\
\end{tikzcd}
\end{center}
\caption{Structures and mappings in the proof of the Extension Property in Lemma \ref{la:MLk}.}
\label{fig:EPMLk}
\end{figure}
Let $\A^{(k)},\B^{(k)}\in\F^{(k)}$ and let $\phi_1\colon \bM\rightarrow \A^{(k)}$,
 $\phi_2\colon \B^{(k)}\rightarrow \A^{(k)}$,
 be epimorphisms.
 Using the same idea based on Lemma~\ref{la:clcoco} as in the previous part, we extend $\phi_1$ to an
 epimorphism $\phi_1'\colon \bL\rightarrow \A^{(n)}$ such that
\[ \phi_1'(\L\setminus \M)=\{k+1,\dots,n\} \text{ and } \phi_1'(p_i^\bL)=i \text{ for } i\in \{k+1,\dots,n\}. \]
We also extend $\phi_2$ to an epimorphism
\[ \phi_2'\colon \B^{(n)}\rightarrow \A^{(n)} \text{ via } x\mapsto x \text{ for } x\in\{k+1,\dots,n\}. \]
By the Extension Property \ref{it:L4} of $\bL$ there exists an epimorphism $\psi\colon \bL\rightarrow\B^{(n)}$
such that $\phi_1' = \phi_2'\psi$.
We note that $\psi(\bM) = \B^{(k)}$ since
for $x\in\bL$:
\[
\psi(x)\in [n]\setminus [k] \Leftrightarrow \phi'_1\psi(x)\in [n]\setminus [k]\Leftrightarrow \phi'_2(x)\in [n]\setminus k \Leftrightarrow x\in \L\setminus\M.
\]
Since $\M$ is a union of connected components, it follows that $\psi$ restricts to an epimorphism
$\bM\to\B^{(k)}$.
Observing that $\phi_1 = \phi_2\psi\restr_\M$ completes the proof of the Extension Property and of the lemma.
\end{proof}

\begin{lemma}
\label{la:graphhomeo}
$s_1^\bL,\dots, s_m^\bL$ are graphs of homeomorphisms of the Cantor space $\L$ fixing $p_1^\bL,\dots,p_n^\bL$.
\end{lemma}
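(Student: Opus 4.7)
The proof strategy is to combine two observations: each constant $p_j^\bL$ is a structurally isolated singleton (it has no $s_i^\bL$-edges to any other point of $\L$), and on the complement of $\{p_1^\bL,\dots,p_n^\bL\}$ the structure $\bL$ looks like Kwiat\-kow\-ska's projective Fra\"{\i}ss\'e limit of $\F$, to which Proposition~\ref{pr:Kw}\ref{it:Kw3} can then be applied via Lemma~\ref{la:MLk}.

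First I would record that $(p_j^\bL,p_j^\bL)\in s_i^\bL$ for all $i,j$, which holds in every $\A^{(n)}\in\F^{(n)}$ by construction and is preserved under any epimorphism $\bL\to\A^{(n)}$. Next, I would show that $p_j^\bL$ has no other $s_i^\bL$-neighbours. Suppose $(p_j^\bL,x)\in s_i^\bL$ with $x\neq p_j^\bL$; choose a clopen $U$ with $p_j^\bL\in U$ and $x\notin U$, and use the Factoring Property~\ref{it:L2} to factor the characteristic function of $U$ as $g\psi$ for some epimorphism $\psi\colon\bL\to\B^{(n)}\in\F^{(n)}$. Then $(\psi(p_j^\bL),\psi(x))=(j,\psi(x))\in s_i^{\B^{(n)}}=s_i^\B\cup\Delta_{[n]}$; since $j\notin B$, we must have $(j,\psi(x))\in\Delta_{[n]}$, giving $\psi(x)=j=\psi(p_j^\bL)$ and contradicting the fact that $\psi$ separates $p_j^\bL$ and $x$. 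The case $(x,p_j^\bL)\in s_i^\bL$ is symmetric, so $\{p_j^\bL\}$ is its own structural connected component of $\bL$.

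For each $a\in\L\setminus\{p_1^\bL,\dots,p_n^\bL\}$ I would then produce a nonempty clopen union of connected components $\M\ni a$ disjoint from $\{p_1^\bL,\dots,p_n^\bL\}$: by zero-dimensionality and Hausdorffness choose pairwise disjoint clopens $W_1,\dots,W_n$ such that $p_j^\bL\in W_j$ and $a\notin\bigcup_jW_j$, apply Lemma~\ref{la:clcoco} to each $W_j$ to obtain $V_j\subseteq W_j$ which is a clopen union of connected components containing $p_j^\bL$, and set $\M:=\L\setminus\bigcup_jV_j$. Then Lemma~\ref{la:MLk} with $k=0$ identifies $\bM$ with the projective Fra\"{\i}ss\'e limit of $\F^{(0)}=\F$, so by Proposition~\ref{pr:Kw}\ref{it:Kw3} each $s_i^\bL\restr_\M$ is the graph of a homeomorphism of $\M$.

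All that remains is assembly. Because $\M$ is a union of connected components, every $s_i^\bL$-neighbour of a point in $\M$ stays in $\M$; combined with the singleton-component analysis at each $p_j^\bL$, this shows that $s_i^\bL$ is a function, injective and surjective on $\L$, and fixes every $p_j^\bL$. Its graph is closed in $\L\times\L$ since $\bL$ is a topological structure, and a function on a compact Hausdorff space with closed graph is continuous; applying this to both $s_i^\bL$ and $(s_i^\bL)^{-1}$ makes $s_i^\bL$ a homeomorphism. I do not expect a major obstacle; the heavy lifting has already been carried out by Lemmas~\ref{la:MLk} and~\ref{la:clcoco} and Proposition~\ref{pr:Kw}, and the only non-routine step is the factoring argument that each constant is a structurally isolated component.
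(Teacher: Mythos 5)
Your proposal is correct and follows essentially the same strategy as the paper: use Lemma~\ref{la:clcoco} to peel off clopen unions of connected components around the marked points, apply Lemma~\ref{la:MLk} to identify the restriction of $\bL$ to a complementary clopen union of components with the projective Fra\"iss\'e limit of $\F=\F^{(0)}$, invoke Proposition~\ref{pr:Kw}\ref{it:Kw3} to get a homeomorphism on each piece, and observe that each $\{p_j^\bL\}$ is its own connected component. There are two small variations worth noting. First, where you establish that $\{p_j^\bL\}$ is a singleton component directly via the Factoring Property \ref{it:L2} (separating $p_j^\bL$ from a purported neighbour and deriving a contradiction through an epimorphism onto some $\B^{(n)}$), the paper instead extracts this fact as a byproduct of a global partition of $\L$ into countably many clopen component-unions. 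Second, and more substantively, the paper secures continuity of the assembled bijection by carefully arranging the partition pieces $b_{ij}$ to converge to $p_i^\bL$ and arguing directly, whereas you sidestep that convergence bookkeeping entirely via the closed graph theorem: $s_i^\bL$ is closed in $\L\times\L$ by definition of a topological $\sigma$-structure, $\L$ is compact Hausdorff, so once $s_i^\bL$ is shown to be the graph of a bijection it and its inverse are automatically continuous. Your version of the final step is cleaner and avoids the explicit convergence construction; the paper's version produces the converging partition anyway because it is reused (in a slightly modified form) in Lemma~\ref{la:transconj}, so the two expositions are optimized for different purposes.
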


\begin{proof}
 We claim that $\L\setminus \{ p_1^\bL,\dots, p_n^\bL\}$ can be partitioned into clopens $b_{ij}$ for
 $i\in [n]$, $j\in\N$, which are unions of connected components such that each sequence
 $(b_{ij})_{j\in\N}$ converges to $p_i^\bL$ and $\bigcup_{j\in\N} b_{ij} \cup \{p_i^\bL\}$ is a union of
 connected components for each $i\in [n]$.
 To see this, first use Lemma \ref{la:clcoco} to 
 partition $L$ into clopens $b_1,\dots,b_n$ such that each $b_i$ is a union of connected components and contains $p_i^\bL$.
Now, for $i\in [n]$, use Lemma \ref{la:clcoco} again to find a decreasing sequence of clopens
$b_1=c_{i1}\supset c_{i2}\supset c_{i3}\supset \dots$ containing $p_i^\bL$ which converges to $p_i^\bL$
 where $c_{ij}$ for $j\in\N$ are unions of connected components.
Letting $b_{ij}:=c_{ij}\setminus c_{i,j+1}$ for $i\in [n]$ and $j\in\N$ yields the desired partition.
At this point we know that each $\{p_i^\bL\}$ is a singleton connected component in $\bL$.
 
 By Lemma \ref{la:MLk}, the restriction of $\bL$ to any $b_{ij}$ is isomorphic to the projective
 Fra{\"\i}ss\'e limit of $\F=\F^{(0)}$.
By \cite[Proposition 4.9]{Kw:GHC}, each $s_k^\bL\restr_{b_{ij}}$ is a homeomorphism of $b_{ij}$.
 Since $b_{ij}$ are unions of connected components, as are $\{p_i^\bL\}$, it follows that 
each $s_k^\bL$ is a bijection of $\L$.
From the convergence assumption for the $b_{ij}$, it follows that $s_k^\bL$ is in fact a homeomorphism
for every $k\in [m]$.
\end{proof}

If we identify $H:=(\Homeo 2^\omega)_{(x_1,\dots,x_n)}$ with
$(\Homeo \L)_{(p_1^\bL,\dots, p_n^\bL)}$, we have $(s_1^\bL,\dots,s_m^\bL)\in H^m$.
We will prove that this tuple is generic with respect to the diagonal conjugation action of $H$ on $H^m$.
We do so by following closely Kwiatkowska's argument for $\Homeo 2^\omega$ \cite[starting on page~1144]{Kw:GHC}.

Let $P$ be a finite clopen partition of $\L$. 
Let $S_P$ denote the set of all surjective relations $s$ on $P$ with the property that $(b,b)\in s$ for every $b\in P$ which contains at least one of the points $p_i^\bL$.
For $f\in H$ define
\[ f\restr_P := \{ (b,c) \in P^2 \st f(b)\cap c \neq \emptyset \}. \]
Then clearly $f\restr_P\in S_P$.

Let $\bs := (s_1^P,\dots, s_m^P)$ be an $m$-tuple of surjective relations on $P$. Define
\[
[P,\bs]:= \bigl\{ (f_1,\dots,f_m)\in H^m \st
f_i\restr_P=s_i^P \text{ for all } i\in [m]\bigr\}
\]
to be the set of tuples $f_1,\dots,f_m$ that act the same as $s_1^P,\dots,s_m^P$, respectively, on $P$.
Any such set $[P,\bs]$ is precisely the intersection of $H^m$ with the corresponding subset
of $(\Homeo 2^\omega)^m$ defined in \cite[page~1144]{Kw:GHC}. Note that $[P,\bs]$ is non-empty
if and only if $s_1^P,\dots,s_m^P \in S_P$.

For $P$ and $\bs$ as above, define
$\P_\bs\in\F_0^{(n)}$
with
\begin{itemize}
\item universe $P$,
\item $s_i^{\P_s}:=s_i^P$ for $i\in [m]$, and
\item  $p_i^{\P_s}$ the unique member of $P$ containing $p_i^\bL$ for $i\in [n]$.
\end{itemize}

\begin{lemma}[{cf. \cite[Lemma 4.11]{Kw:GHC}}]
\label{la:topb}
 The sets $[P,\bs]$ for finite clopen partitions $P$ of $\L$ and $\bs\in S_P^m$
 form a topological basis of $H^m$.
\end{lemma}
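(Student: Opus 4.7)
The plan is to reduce to Kwiatkowska's \cite[Lemma 4.11]{Kw:GHC}, which establishes the analogous basis result for $(\Homeo 2^\omega)^m$, by exploiting that $H^m$ carries the subspace topology from $(\Homeo 2^\omega)^m$. Under the identification $\L \cong 2^\omega$ from Lemma~\ref{la:LCantor}, a finite clopen partition $P$ of $\L$ is also a clopen partition of $2^\omega$, and for any $m$-tuple $\bs$ of surjective relations on $P$ one may define the ambient analogue
\[
[P,\bs]_0 := \bigl\{ (f_1,\dots,f_m) \in (\Homeo 2^\omega)^m \st f_i\restr_P = s_i^P \text{ for all } i\in[m] \bigr\}.
\]
As recorded in the excerpt just before the lemma, $[P,\bs] = [P,\bs]_0 \cap H^m$.

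First I would deduce that each $[P,\bs]$ with $\bs\in S_P^m$ is open in $H^m$: Kwiatkowska's lemma says $[P,\bs]_0$ is open in $(\Homeo 2^\omega)^m$, and the subspace topology transfers this to $H^m$. For the basis property, given $(f_1,\dots,f_m) \in H^m$ and an open neighbourhood $U$ of it in $H^m$, I would write $U = V \cap H^m$ for some open $V$ in $(\Homeo 2^\omega)^m$ and apply Kwiatkowska's lemma to obtain a finite clopen partition $P$ and an $m$-tuple $\bs$ of surjective relations on $P$ with $(f_1,\dots,f_m) \in [P,\bs]_0 \subseteq V$. Intersecting with $H^m$ then gives $(f_1,\dots,f_m) \in [P,\bs] \subseteq U$.

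The only point requiring verification is that the $\bs$ produced automatically lies in $S_P^m$. Since $(f_1,\dots,f_m) \in [P,\bs]_0$ forces $s_i^P = f_i\restr_P$, and since each $f_i \in H$ fixes every $p_j^\bL$, whenever $b \in P$ contains some $p_j^\bL$ one has $p_j^\bL \in b \cap f_i(b)$ and hence $(b,b) \in s_i^P$; the surjectivity of $s_i^P$ is inherited from the bijectivity of $f_i$. Being essentially a translation of Kwiatkowska's result to our relative setting, this argument presents no substantive obstacle --- the only mildly delicate point is the bookkeeping ensuring that the additional fixing constraint built into $S_P$ arises for free from membership in $H$, as just verified.
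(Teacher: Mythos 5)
Your proposal is correct and takes essentially the same approach as the paper: the published proof also consists of the observation that $H^m$ carries the subspace topology from $(\Homeo \L)^m$ and that the result then follows from Kwiatkowska's Lemma 4.11; your extra bookkeeping (checking that the produced $\bs$ lies in $S_P^m$) is sound and makes explicit what the paper already noted just before the lemma, namely that $f\restr_P\in S_P$ for every $f\in H$.
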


\begin{proof}
 This follows directly from \cite[Lemma 4.11]{Kw:GHC} since the topology of pointwise convergence on
 $H^m$ is the subspace topology induced by the topology of pointwise convergence on $(\Homeo L)^m$.
\end{proof}

\begin{lemma}[{cf. \cite[Proposition 4.12]{Kw:GHC}}]
\label{la:dense}
The orbit $(s_1^\bL,\dots,s_m^\bL)^H$ under the diagonal conjugation action of $H$ on $H^m$ is dense in~$H^m$.
\end{lemma}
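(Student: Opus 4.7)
My plan is to exploit the fact that, by Lemma~\ref{la:topb}, the sets $[P,\bs]$ (for $P$ a finite clopen partition of $\L$ and $\bs\in S_P^m$) form a basis of $H^m$, so it suffices to show that each non-empty $[P,\bs]$ meets the orbit $(s_1^\bL,\dots,s_m^\bL)^H$. The defining conditions of $S_P$ are exactly what is required for the associated structure $\P_\bs$ to lie in $\F_0^{(n)}$, so the Fra{\"\i}ss{\'e} machinery enters naturally through coinitiality.

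The first steps are: by Lemma~\ref{la:Focoinit} obtain $\Q\in\F^{(n)}$ and an epimorphism $\alpha\colon\Q\to\P_\bs$; by Projective Universality \ref{it:L1} for $\bL$ obtain an epimorphism $\phi\colon\bL\to\Q$; and set $\psi:=\alpha\phi\colon\bL\to\P_\bs$. The preimages $\{\psi^{-1}(b)\st b\in P\}$ form a second clopen partition $Q$ of $\L$ in bijection with $P$. Because $\psi$ preserves the constants, each $p_i^\bL$ lies in $\psi^{-1}(b_i)$ whenever $b_i\in P$ is the block containing $p_i^\bL$. Moreover, $\psi$ being an epimorphism of $\sigma^{(n)}$-structures gives $\psi(s_i^\bL)=s_i^P$, which unwinds to the key equivalence
\[
s_i^\bL\cap\bigl(\psi^{-1}(b)\times\psi^{-1}(c)\bigr)\neq\emptyset\iff (b,c)\in s_i^P
\]
for all $(b,c)\in P^2$ and $i\in[m]$.

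Next I would build $h\in H$ realising this abstract match between $P$ and $Q$ as an actual self-homeomorphism of $\L$, with $h(b)=\psi^{-1}(b)$ for every $b\in P$. Each block $b$ and each $\psi^{-1}(b)$ is a non-empty clopen of $\L$, hence is itself a Cantor space (by Lemma~\ref{la:LCantor} and the fact that clopens of a Cantor space inherit the absence of isolated points). Using the standard $k$-transitivity of $\Homeo 2^\omega$ on distinct $k$-tuples, I can find a homeomorphism $h_b\colon b\to\psi^{-1}(b)$ fixing every $p_i^\bL$ contained in $b$, and patch the $h_b$ over the partition $P$ into a single $h\in\Homeo\L$ that fixes every $p_i^\bL$, i.e.\ $h\in H$. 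Substituting into $(h^{-1}s_i^\bL h)\restr_P$ and invoking the displayed equivalence then shows $(h^{-1}s_i^\bL h)\restr_P=s_i^P$ for all $i\in[m]$, placing the conjugate of $(s_1^\bL,\dots,s_m^\bL)$ by $h$ inside $[P,\bs]$.

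The main obstacle I anticipate is precisely this last geometric step: having obtained, via Fra{\"\i}ss{\'e} universality, a partition $Q$ abstractly isomorphic to $\P_\bs$ as a $\sigma^{(n)}$-structure, one must still manufacture a concrete $h\in H$ realising that isomorphism, and this forces the use of strong homogeneity of the Cantor space in the presence of several distinguished fixed points that have to be pinned simultaneously.
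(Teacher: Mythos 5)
Your proof is correct and follows essentially the same route as the paper's: coinitiality (Lemma~\ref{la:Focoinit}) plus Projective Universality~\ref{it:L1} yield an epimorphism $\psi\colon\bL\to\P_\bs$, and the induced clopen partition of $\L$ is abstractly isomorphic to $\P_\bs$ as a $\sigma^{(n)}$-structure, after which one conjugates by an element of $H$ realising this isomorphism. The only difference is that you fill in the detail the paper dispatches with ``clearly we have $g\in H$ that induces an isomorphism'', spelling out the construction of $h$ blockwise via homogeneity of the Cantor space (with the $p_i^\bL$ pinned) and verifying $(h^{-1}s_i^\bL h)\restr_P=s_i^P$ explicitly.
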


\begin{proof}
For a finite clopen partition $P$ of $\L$ and $\bs\in S_P^m$, let
\[  D(P,\bs):=\bigl\{ (f_1,\dots, f_m)^g\st (f_1,\dots, f_m)\in [P,\bs], g \in H
  \bigr\}. \]
Let $D$ be the intersection of all sets $D(P,\bs)$ for partitions $P$ of $\L$ and $\bs\in S_P^m$.
We claim that $(s_1^\bL,\dots,s_m^\bL)\in D$, and the result then follows from Lemma \ref{la:topb}.

For a finite clopen partition $P$ of $\L$ and $\bs\in S_P^m$, there exist $\A\in\F^{(n)}$ and an epimorphism $\phi\colon\A\rightarrow \P_s$ by  Lemma~\ref{la:Focoinit}.
By the Projective Universality \ref{it:L1} of $\bL$ there exists an epimorphism
$\psi\colon \bL\rightarrow \A$.
Let $Q$ be the partition of $\bL$ determined by the kernel of the epimorphism $\phi\psi$.
Then 
\[
\P\cong (Q; s_1^\bL\restr_Q,\dots,s_m^\bL\restr_Q,p_1^\bL\restr_Q,\dots,p_n^\bL\restr_Q).
\]
Here $p_i^\bL\restr_Q$ denotes the unique block in $Q$ that contains $p_i^\bL$.
Clearly we have $g\in H$ that induces an isomorphism between these two structures.
Then $(s_1^\bL,\dots,s_m^\bL)^g\in [P,\bs]$, proving that
$(s_1^\bL,\dots,s_m^\bL)\in D(P,\bs)$.
This completes the proof of the claim and of the lemma.
\end{proof}

\begin{lemma}[{cf. \cite[Proposition 4.13]{Kw:GHC}}]
\label{la:Gdelta}
The orbit $(s_1^\bL,\dots,s_m^\bL)^H$ is the intersection of countably many open sets in $H^m$.
\end{lemma}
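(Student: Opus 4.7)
The plan follows Kwiatkowska's proof of \cite[Proposition 4.13]{Kw:GHC}, adapted to $\F^{(n)}$ with its distinguished constants. Identifying each $f_i \in H$ with its graph, every $\ff=(f_1,\dots,f_m) \in H^m$ determines a topological $\sigma^{(n)}$-structure $\bL_\ff := (\L; f_1,\dots,f_m,p_1^\bL,\dots,p_n^\bL)$. By Lemma~\ref{la:graphhomeo} and Projective Homogeneity~\ref{it:L3} for $\bL$, the tuple $\ff$ lies in the orbit $(s_1^\bL,\dots,s_m^\bL)^H$ if and only if $\bL_\ff \cong \bL$; by Proposition~\ref{pr:EP}, this in turn is equivalent to $\bL_\ff$ satisfying \ref{it:L1}, \ref{it:L2} and the Extension Property~\ref{it:L4} with respect to $\F^{(n)}$.

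Each of the three properties \ref{it:L1}, \ref{it:L2}, \ref{it:L4} for $\bL_\ff$ can be expressed as a countable intersection of open conditions on $\ff$, using that $\L$ has only countably many clopen subsets (hence only countably many finite clopen partitions) and that $\F^{(n)}$ is countable. The key case \ref{it:L4} is encoded, for each quadruple $(P,\bs,\B,\phi)$ with $P$ a finite clopen partition of $\L$, $\bs \in S_P^m$ such that $\P_\bs \in \F^{(n)}$, $\B \in \F^{(n)}$, and $\phi\colon \B \to \P_\bs$ an epimorphism, by the set
\[
V(P,\bs,\B,\phi) := \bigl(H^m\setminus [P,\bs]\bigr)\cup \bigl\{\ff\in [P,\bs] \st \exists\, Q,\beta \text{ as below}\bigr\},
\]
where $Q$ is a finite clopen refinement of $P$ and $\beta\colon \P_{\ff\restr Q}\to \B$ is an epimorphism with $\phi\beta$ equal to the canonical refinement map $\P_{\ff\restr Q}\to \P_\bs$. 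Analogous open sets encode \ref{it:L1} and \ref{it:L2} by demanding the existence of refinements $Q$ with $\P_{\ff\restr Q}\in \F^{(n)}$ and suitable epimorphisms onto prescribed $\A\in\F^{(n)}$, where coinitiality (Lemma~\ref{la:Focoinit}) and Lemma~\ref{la:clcoco} ensure that at least the target types in $\F^{(n)}$ are available after refinement.

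Openness of each such set follows from Lemma~\ref{la:topb}: the complement of $[P,\bs]$ is open, and if $\ff\in [P,\bs]$ witnesses the existential condition via some $Q$, then every tuple in the basic open $[Q,\ff\restr Q]\subseteq [P,\bs]$ witnesses it with the same $Q$ and the same epimorphism. The intersection of all these countably many open sets is therefore a $G_\delta$ set, and I would show it equals the orbit as follows. Any $\ff$ in the intersection makes $\bL_\ff$ satisfy \ref{it:L1}, \ref{it:L2}, \ref{it:L4}, hence $\bL_\ff\cong \bL$ by Proposition~\ref{pr:EP}, so $\ff$ is in the orbit. Conversely, $(s_1^\bL,\dots,s_m^\bL)$ lies in every such open set because $\bL$ itself enjoys \ref{it:L1}, \ref{it:L2}, \ref{it:L4}, and membership in the intersection is preserved under the diagonal conjugation action of $H$ since this action permutes the indexing quadruples among themselves (by pulling back the partitions $P$).

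The main obstacle I anticipate is the translation between the global Fra{\"\i}ss{\'e} properties for $\bL_\ff$ and the local witnesses encoded by the open sets above: one must verify that local epimorphisms defined on finite quotients $\P_{\ff\restr Q}$ can be coherently assembled into a global epimorphism $\bL_\ff \to \B$ realising~\ref{it:L4}. This is precisely the technical core of Kwiatkowska's argument in \cite{Kw:GHC}, with additional bookkeeping here required to preserve the distinguished constants $p_i^\bL$ throughout the refinement and gluing steps.
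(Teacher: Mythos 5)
Your proposal follows the paper's proof in its essentials: both reduce the orbit to $\{\ff : \bL_\ff\cong\bL\}$ via Proposition~\ref{pr:EP}, and then express each of \ref{it:L1}, \ref{it:L2}, \ref{it:L4} as a countable intersection of open sets built from the basic opens $[P,\bs]$ of Lemma~\ref{la:topb}. Your parametrization of the \ref{it:L4}-sets by quadruples $(P,\bs,\B,\phi)$ is a cosmetic reformulation of the paper's indexing by a continuous surjection $\phi_1\colon\L\to A$ (which is the same data as a labelled clopen partition) together with $\phi_2\colon\B\to\A$; the resulting sets $V(P,\bs,\B,\phi)$ coincide with the paper's $E_{\phi_1,\phi_2}$.

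One remark on your closing paragraph: the ``obstacle'' you anticipate is not actually there. If $Q$ is a finite clopen partition and $\beta\colon\P_{\ff\restr Q}\to\B$ is an epimorphism satisfying the compatibility with $\phi$, then the global epimorphism realising \ref{it:L4} is simply $\psi:=\beta\pi_Q$, where $\pi_Q\colon\bL_\ff\to\P_{\ff\restr Q}$ is the canonical quotient map (which is always an epimorphism of topological $\sigma^{(n)}$-structures, preserving the constants since each $p_i^\bL$ lies in a unique block of $Q$). No gluing or coherence across different partitions is needed; a single witnessing $Q$ suffices, and this is exactly why the sets $V(P,\bs,\B,\phi)$ are open and why their intersection characterises \ref{it:L4}.
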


\begin{proof}
For a tuple $\ff=(f_1,\dots,f_m)\in H^m$, let
\[
\bL_\ff:=(\L; f_1,\dots,f_m,p_1^\bL,\dots,p_n^\bL).
\]
By Theorem \ref{thm:IrwSol} and Proposition \ref{pr:EP} the orbit $(s_1^\bL,\dots,s_m^\bL)^H$ is precisely the set of all $\ff\in H^m$ such that 
$\bL_\ff$ is isomorphic to $\bL$, i.e.
the set of all $\ff\in H^m$ such that
$\bL_\ff$ satisfies Projective Universality \ref{it:L1}, the Factoring Property \ref{it:L2} and the Extension Property \ref{it:L4}. 
In what follows we will show how to express the set of all $\ff\in H^m$ satisfying each one of these properties in turn as a countable intersection of open sets.

\noindent
\textbf{Projective Universality \ref{it:L1}.}
Begin by noting that for $\A\in\F^{(n)}$ the set
\[
U_\A:=\bigl\{ \ff\in H^m\colon \exists \phi \st  \bL_\ff\twoheadrightarrow \A\bigr\}
\]
is open. This is because 
\begin{align*} U_\A= \bigcup \bigl\{ [P,\bs] \st & P \text{ is a clopen partition of } \L,\, \bs\in S_P^m,\, \P_\bs \cong \A \bigr\}
 \end{align*}
is a union of open sets by Lemma~\ref{la:topb}.
The set of all $\ff\in H^m$ such that $\bL_\ff$ satisfies Projective Universality is the countable intersection $\bigcap_{\A\in \F^{(n)}}U_\A$.

\noindent
\textbf{Factoring Property \ref{it:L2}.}
Consider any continuous mapping $u$ from $\L$ to some finite discrete space.
Let $T_u$ be the set of all $\ff\in H^m$ for which there is a finite partition of $\L$ into clopens refining the partition
induced by $\ker(u)$ such that the corresponding quotient of $\bL_\ff$ is in $\F^{(n)}$.
Note that
\[
 T_u=\bigcup  \bigl\{ [P,\bs] \st P \text{ refines } \ker(u),\, \bs\in S_P^m,\, \P_\bs \cong\A\in \F^{(n)} \bigr\}
\]
 is open.
The set of all $\ff\in H^m$ such that $\bL_\ff$ satisfies property \ref{it:L2} is $\bigcap_u T_u$.
This intersection is countable because there are only countably many partitions of $\L$ into finitely many clopens.
We note in passing that this argument also applies to the case $n=0$, but it was missed in the original proof in \cite{Kw:GHC} (see step (3) in the proof of \cite[Proposition 4.13]{Kw:GHC}).

\noindent
\textbf{Extension Property \ref{it:L4}.}
Let $\A,\B\in \F^{(n)}$, let $\phi_1\colon \L\rightarrow A$ be a continuous surjection, and
let ${\phi_2\colon \B\rightarrow \A}$ be an epimorphism.
Define the set
$E_{\phi_1,\phi_2}$ to consist of all
$\ff\in H^m$ satisfying the following property:
if $\phi_1$ is a homomorphism $\bL_\ff\rightarrow \A$, then there exists an epimorphism $\psi\colon \bL_\ff\rightarrow \B$ such that $\phi_2\psi=\phi_1$.
This is an open set because it is the union of all sets of the form $[P,\bs]$ where at least one of the
following is satisfied:
\begin{itemize}
\item
$P=\ker(\phi_1)$ and $\phi_1$ does not induce an isomorphism $\P_\bs\to\A$; or
\item
  $P=\ker(\psi)$ for some continuous surjection
  $\psi\colon \L\rightarrow B$ which induces an isomorphism $\P_\bs\to\B$ and satisfies ${\phi_2\psi=\phi_1}$.
\end{itemize}
The set of all $\ff\in H^m$ such that $\bL_\ff$ satisfies the Extension Property is $\bigcap_{\phi_1,\phi_2} E_{\phi_1,\phi_2}$.
The intersection is countable because there are only countably many choices for $\A$ and $\B$,
countably many choices for $\phi_1\colon\L\rightarrow A$, and
finitely many choices for $\phi_2\colon\A\rightarrow\B$.

This concludes the proof that
$\{ \ff\in H^m \st \bL_\ff\cong \bL \}$ is an intersection of countably many open sets in $H^m$
and of the lemma.
\end{proof}

 Finally we are ready to prove the main results of this section.

\begin{proof}[Proof of Theorem~\ref{thm:H}]
 Immediate from Lemmas \ref{la:dense} and \ref{la:Gdelta}.
\end{proof}

\begin{proof}[Proof of Theorem~\ref{thm:H2}]
 By Lemma~\ref{la:LCantor} we may assume that the projective Fra{\"\i}ss\'e limit
\[\bL = (2^\omega;s_1^\bL,\dots,s_m^\bL,p_1^\bL,\dots,p_n^\bL) \]
 of $\F^{(n)}$ has universe the Cantor space $2^\omega$ and $p_i^\bL = x_i$ for $i\in [n]$.
 By Lemma~\ref{la:graphhomeo} we have $s_1^\bL,\dots,s_m^\bL\in H$.
 Note that $\alpha\colon \bL\to (2^\omega;h_1,\dots,h_m$, $x_1,\dots,x_n)$ is an isomorphism if and only if
 $\alpha\in H$ and $(h_1,\dots,h_m)^\alpha = (s_1^\bL,\dots,s_m^\bL)$.
 The latter condition is equivalent to $(h_1,\dots,h_m)$ being generic by Lemmas \ref{la:dense} and
 \ref{la:Gdelta}.
\end{proof}

\section{Labellings and quotient property}
\label{sec:qp}

To obtain a proof of Theorem~\ref{thm:DAG} in the non-abelian case from Theorem~\ref{thm:H}, we need to understand
the diagonal conjugation action in $\Aut\ABxe$.
For this in turn, we will need some combinatorial observations concerning the elements of $\F_0^{(n)}$,
which we establish in this section.

 Throughout this section let $\sigma$ be the signature $\{s_1 ,\dots,s_m\}$
 consisting of $m$ binary relation symbols.
Let $\A,\B$ be $\sigma$-structures and let ${\phi\colon\B\rightarrow\A}$ be an epimorphism.
Let $T$ be a group. The reader may think of it as 
(a subgroup of) $\Aut\AA$.
Let $\lambda\colon A\to T^m$ be a \emph{labelling} of $\A$ by elements of $T^m$,
and let $\mu\colon B\to T$ be a labelling of $\B$ by $T$.
We say that $\phi,\lambda,\mu$ satisfy the \emph{quotient property} (QP) if
\begin{equation}
\tag{QP} \label{eq:QP}
\forall i\in [m]\ \forall (x,y)\in s_i^\B\colon \mu(x)^{-1} \mu(y)=\lambda\phi(y)_i.
\end{equation}
 Here $\lambda\phi(y)_i$ denotes the $i$-th component of $\lambda\phi(y)\in T^m$.

\begin{figure}
\begin{tikzpicture}
\node (a) at (0,0)
         {\begin{tikzpicture} [decoration = {markings,
    mark = between positions 0.1 and 1 step 0.125 with {\arrow[>=stealth]{>}}
  }
  ]
  \draw[postaction = decorate] (0, 0) circle [radius = 2cm];
  \path (3:2cm) node[fill=white]{$a_p=b_1$}
  (45:2cm) node[fill=white]{$a_1$}
  (90:2cm) node[fill=white]{$a_2$}
  (315:2cm) node[fill=white]{$a_{p-1}$};
\end{tikzpicture}
};
\node (b) at (4,0)
         {\begin{tikzpicture} [decoration = {markings,
    mark = between positions 0.15 and 1 step 0.33 with {\arrow[>=stealth]{>}}
  }
  ]
  \draw[postaction = decorate] (0, 0) -- (3.5,0);
 \path (0:0.8cm) node[fill=white]{$b_2$};
\end{tikzpicture}
};
\node (c) at (7.5,0)
 {\begin{tikzpicture}[decoration = {markings,
     mark = between positions 0.3 and 1 step 0.165 with {\arrow[>=stealth]{>}}
   }
   ]
   \draw[postaction = decorate] (0, 0) circle [radius = 1.5cm];
   \path (181:1.5cm) node[fill=white]{$b_q=c_1$}
   (240:1.5cm) node[fill=white]{$c_2$}
   (300:1.5cm) node[fill=white]{$c_3$}
   (120:1.5cm) node[fill=white]{$c_{r}$};  
\end{tikzpicture}   
};
\end{tikzpicture}
\caption{Spiral digraph $\S(p,q,r)$.}
\label{fig:spiral}
\end{figure}

We now introduce what Kwiatkowska \cite[Section 3]{Kw:GHC} calls \emph{spiral structures}
in a slightly different notation (see Figure~\ref{fig:spiral}).
For $p,q,r\in\N$ with $p,r>1$, we let $\S=\S(p,q,r):=(S(p,q,r);s^\S)$ be the digraph
\begin{itemize}
\item
 with $p+q+r-2$ vertices
  $$S(p,q,r) :=\{a_i\colon i\in [p]\}\cup \{ b_i\colon i\in [q]\}\cup\{ c_i\colon i\in [r]\},$$
  where $a_p=b_1$, $b_q=c_1$, and 
\item
  edge relation
  \begin{align*} s^\S:= & \bigl\{ (a_i,a_{i+1})\st i\in [p-1]\bigr\}  \cup
                        \bigl\{ (b_i,b_{i+1})\st i\in [q-1]\bigr\}  \cup \\
                        & \bigl\{ (c_i,c_{i+1})\st i\in [r-1]\bigr\}\cup\bigl\{ (a_p,a_1),(c_r,c_1)\bigr\}.
  \end{align*}
\end{itemize}

The next lemma establishes that for any labelling of a spiral there is a spiral preimage satisfying \eqref{eq:QP}.

\begin{lemma}
\label{la:spiralQP}
Let $m,p,q,r\in\N$ with $p,r>1$, and let $T$ be a group of exponent $t\in\N$.
For any labelling $\lambda\colon \S(p,q,r)\rightarrow T^m$ there exists a labelling
$\mu\colon \S(tp,q,tr)\rightarrow T$ and an epimorphism $\phi\colon \S(tp,q,tr)\rightarrow \S(p,q,r)$
satisfying \eqref{eq:QP}.
Furthermore, for any $x_0\in S(tp,q,tr)$ and any $\alpha\in T$, the labelling $\mu$ can be chosen so that
$\mu(x_0)=\alpha$.
\end{lemma}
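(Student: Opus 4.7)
The plan is to take $\phi$ to be the natural ``covering'' map that wraps each cycle of $\S(tp,q,tr)$ uniformly $t$ times around the corresponding cycle of $\S(p,q,r)$ and acts as the identity on the middle $b$-path; having fixed $\phi$, one then extends $\mu$ from the prescribed base value $\mu(x_0)=\alpha$ by propagating along edges according to~\eqref{eq:QP}. Concretely, denoting vertices of $\S(tp,q,tr)$ with primes, I would set $a'_j\mapsto a_{((j-1)\bmod p)+1}$, $b'_j\mapsto b_j$, and $c'_j\mapsto c_{((j-1)\bmod r)+1}$, respecting the identifications $a'_{tp}=b'_1$ and $b'_q=c'_1$. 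This is a graph epimorphism, and the partition of edges among $s_1,\dots,s_m$ on $\S(tp,q,tr)$ is taken to be the one inherited from $\S(p,q,r)$ via $\phi$.

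With $\phi$ in hand, the next step is to define $\mu$ by setting $\mu(x_0):=\alpha$ and extending along edges using~\eqref{eq:QP} as a recurrence: if $(x,y)\in s_i^{\S(tp,q,tr)}$, put $\mu(y):=\mu(x)\lambda\phi(y)_i$, and use the inverse rule when traversing an edge backwards. Since $\S(tp,q,tr)$ is connected, every vertex is reachable from $x_0$ by an undirected path, so this procedure determines $\mu$ on the whole of $S(tp,q,tr)$ provided it is consistent.

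The main obstacle is precisely this consistency. The underlying undirected graph of $\S(tp,q,tr)$ contains exactly two simple cycles, namely the $a'$-cycle of length $tp$ and the $c'$-cycle of length $tr$, so it suffices to check that the net $T$-element accumulated by the recurrence around each of these cycles equals the identity of $T$. As one traverses the $a'$-cycle once, $\phi$ visits each of $a_1,\dots,a_p$ in cyclic order exactly $t$ times, and the associated edge labels from $s_1,\dots,s_m$ repeat $t$ times as well; hence the product of group elements dictated by~\eqref{eq:QP} comes out as $Q_a^t$, where $Q_a$ is the corresponding product around the $a$-cycle of $\S(p,q,r)$. Since $T$ has exponent $t$, we have $Q_a^t=e$, and the same argument gives $Q_c^t=e$ for the $c'$-cycle. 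Therefore $\mu$ is well-defined, \eqref{eq:QP} holds by construction, and $\mu(x_0)=\alpha$ by choice of the base case.
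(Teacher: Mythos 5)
Your proposal is correct and follows essentially the same route as the paper: define the covering map $\phi$ wrapping the $a$-cycle and $c$-cycle $t$ times uniformly and fixing the $b$-path, propagate $\mu$ from $\mu(x_0)=\alpha$ along edges via~\eqref{eq:QP}, and then verify consistency on the two cycles by observing that the net product is (a conjugate of) $Q^t$ for $Q$ the corresponding product on the base spiral, which is trivial since $T$ has exponent~$t$. The paper phrases this as defining $\mu$ along the spanning directed path $a_1\to\cdots\to a_{tp}=b_1\to\cdots\to b_q=c_1\to\cdots\to c_{tr}$ and then checking~\eqref{eq:QP} on the two remaining edges $(a_{tp},a_1)$ and $(c_{tr},c_1)$, which is the same computation in different words.
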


\begin{proof}
 Define
\begin{equation}
\label{eq:Sepi}
\phi\colon S(tp,q,tr)\rightarrow S(p,q,r) \text{ by } 
\begin{cases}
  \phi(a_i) := a_j  & \text{for } i\in [tp], j\in [p], i\equiv_p j, \\
  \phi(b_i) := b_i & \text{for } i\in [q], \\
  \phi(c_i) := c_j  & \text{for } i\in [tr], j\in [r], i\equiv_r j. 
\end{cases}
\end{equation}
It is obvious that $\phi$ is a digraph epimorphism.

The following is the key observation:
given $\lambda$ on $S(p,q,r)$ and either $\mu(x)$ or $\mu(y)$ for an edge $(x,y)$ in $\S(tp,q,tr)$,
the quotient property uniquely determines the other.
Now consider the directed path
\[
a_1\rightarrow \dots\rightarrow a_{tp}=b_1\rightarrow \dots \rightarrow b_q=c_1\rightarrow\dots\rightarrow c_{tr}
\]
in $\S(tp,q,tr)$. Starting from $x_0$ and $\mu(x_0)=\alpha$, and applying this observation forward and backward along the path, defines $\mu\colon S(tp,q,tr)\rightarrow T$,
which gives \eqref{eq:QP} \emph{for every $(x,y)$ belonging to the path}.

The only edges of $\S(tp,q,tr)$ not belonging to the path are $(a_{tp},a_1)$, $(c_{tr},c_1)$.
We claim that \eqref{eq:QP} holds for these two edges as well. 
Indeed, by \eqref{eq:Sepi} and \eqref{eq:QP} along $a_1\rightarrow\dots\rightarrow a_{tp}$, we have
\begin{equation}
\label{eq:QPa}
\mu(a_{up+i-1})^{-1}\mu(a_{up+i})=\lambda(a_i) \quad \text{for }
u\in [0,t-1],\ i\in [p],\ up+i>1.
\end{equation}
Multiplying all these equalities and using Lagrange's Theorem yields
\[
\mu(a_1)^{-1}\mu(a_{tp})=\lambda(a_2)\dots\lambda(a_p) \bigl(\lambda(a_1)\dots\lambda(a_p)\bigr)^{t-1}=\lambda(a_1)^{-1}.
\]
Hence $\mu(a_{tp})^{-1}\mu(a_1)=\lambda(a_1)$, which is precisely \eqref{eq:QP} for $(a_{tp},a_1)$.
The proof for $(c_{tr},c_1)$ is analogous, completing the proof of the claim and the lemma.
\end{proof}

We now move on to arbitrary surjective $\sigma$-structures.

\begin{lemma}
\label{la:surjQP}
Let $T$ be a group of exponent $t \in\N$.  
For any finite surjective $\sigma$-structure $\A=(A;s_1^\A,\dots,s_m^\A)$ and any labelling $\lambda \colon A\rightarrow T^m$
there exist a finite surjective $\sigma$-structure $\B$, an epimorphism $\phi\colon \B\rightarrow\A$, and a labelling $\mu\colon B\rightarrow T$ satisfying \eqref{eq:QP}.
\end{lemma}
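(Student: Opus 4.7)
My plan is to construct $\B$ directly as a Cayley-style cover of $\A$, generalising the two-cycle spiral construction of Lemma~\ref{la:spiralQP}.

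Let $H := \langle \lambda(a)_i \st a \in A,\ i \in [m]\rangle \leq T$ be the subgroup of $T$ generated by the labels appearing in $\lambda$. In the intended applications $T$ (and hence $H$) is finite, since $T$ will be a group of automorphisms of a finite algebra. I would then set $B := A \times H$ and
\[
s_i^\B := \bigl\{ \bigl((a,g),\,(b,\,g\cdot \lambda(b)_i)\bigr) \st (a,b) \in s_i^\A,\ g \in H \bigr\} \quad \text{for } i \in [m],
\]
together with the projection $\phi \colon (a,g) \mapsto a$ and the labelling $\mu \colon (a,g) \mapsto g$.

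Next I would verify four routine items. First, $\B$ is finite, since $|B| = |A|\cdot|H|$. Second, each $s_i^\B$ is surjective: for any $(a,g) \in B$, lift the guaranteed out-neighbour $(a,b)\in s_i^\A$ to the edge $\bigl((a,g),(b,g\cdot\lambda(b)_i)\bigr)\in s_i^\B$, and lift the guaranteed in-neighbour $(a',a)\in s_i^\A$ to $\bigl((a',g\cdot \lambda(a)_i^{-1}),(a,g)\bigr)\in s_i^\B$; closure of $H$ under inverses makes the second lift available. Third, $\phi$ is an epimorphism, because every $(a,b) \in s_i^\A$ is the $\phi$-image of $\bigl((a,1),(b,\lambda(b)_i)\bigr) \in s_i^\B$. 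Fourth, the quotient property \eqref{eq:QP} is built into the construction: for any $\bigl((a,g),(b,g\cdot \lambda(b)_i)\bigr) \in s_i^\B$, one computes
\[
\mu(a,g)^{-1} \mu(b, g\cdot \lambda(b)_i) = g^{-1}(g\cdot \lambda(b)_i) = \lambda(b)_i = \lambda(\phi(b,g\cdot \lambda(b)_i))_i.
\]

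The step requiring most care is ensuring finiteness of $H$. This is automatic whenever $T$ is finite, which covers all the intended applications where $T \leq \Aut\AA$ for the finite algebra $\AA$ of the main theorem. For a fully general exponent-$t$ group $T$ the Burnside problem could in principle interfere with the finiteness of a finitely generated subgroup of exponent dividing $t$, and one would need a more bespoke construction — for instance following Lemma~\ref{la:spiralQP} more closely by decomposing $\A$ into cyclic subgraphs, stretching each cycle by a factor of $t$ in the cover, and using the ``furthermore'' clause of Lemma~\ref{la:spiralQP} to match $\mu$-values at shared vertices.
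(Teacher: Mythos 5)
Your voltage/Cayley-cover construction $\B := A \times H$ with $H := \langle \lambda(a)_i \st a \in A,\ i \in [m]\rangle$ is correct and takes a genuinely different, more uniform route than the paper. The paper proceeds relation by relation: it decomposes each digraph $(A;s_i^\A)$ into a disjoint union of spirals via \cite[Lemma 4.8]{Kw:GHC}, stretches each spiral by a factor of $t$ and labels it using Lemma~\ref{la:spiralQP} (this is where the exponent does real work, cancelling the label product around each cycle), takes $|T|$ labelled copies so that every pair in $A\times T$ has a lift, disjointly unions over $i\in[m]$, and finally patches in extra edges to restore surjectivity of the relations $s_j$ for $j\neq i$. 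Your cover builds all $m$ relations simultaneously and makes both the epimorphism condition and \eqref{eq:QP} immediate from the definitions; as a bonus it delivers for free the refinement, invoked later in the proof of Lemma~\ref{la:transconj}, that all $\mu$-values lie in the subgroup generated by the labels. Your Burnside caveat is genuine — for an infinite $T$ of exponent $t$ the finitely generated subgroup $H$ need not be finite — but it is not a defect of your argument relative to the paper's: the paper's own proof also implicitly requires $T$ finite (it takes ``$|T|$ such labellings'' and builds $\B_i$ of size at least $|T|$), and in fact the lemma as literally stated is false without that restriction. Already for $A=\{a\}$ with loops $s_1^\A=s_2^\A=\{(a,a)\}$, surjectivity of $s_1^\B,s_2^\B$ together with \eqref{eq:QP} forces $\mu(B)$ to be a nonempty finite union of left cosets of $\langle\lambda(a)_1,\lambda(a)_2\rangle$, which is impossible if that subgroup is infinite. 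Since the lemma is applied only with $T=\Aut\AA$ for $\AA$ finite, both proofs are sound there, and the ``more bespoke construction'' escape hatch you sketch at the end is not needed once $T$ is taken finite, as it implicitly must be.
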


\begin{proof}
We will build the domain of $\B$ and its relations in stages.

Fix $i\in [m]$.
By \cite[Lemma 4.8]{Kw:GHC}, the digraph reduct $(A;s_i^\A)$ of $\A$ has a preimage $\C_i$
that is a disjoint union of spirals.
For every $a\in A$, label all preimages of $a$ in $\C_i$ by $\lambda(a)$.
By Lemma \ref{la:spiralQP}, for each spiral $\S(p,q,r)$ in $\C_i$,
the spiral $\S(tp,q,tr)$ is a preimage of 
$\S(p,q,r)$, and it has a labelling $\mu\colon S(tp,q,tr)\rightarrow T$ such that \eqref{eq:QP} holds.
In fact, there are $|T|$ such labellings. Taking the disjoint union over all spirals $\S(tp,q,tr)$
and all their labellings thus constructed,
we obtain a structure $\B_i=(B_i; s_1^{\B_i},\dots, s_m^{\B_i})$ such that 
\begin{itemize}
\item
$s_i^{\B_i}$ is surjective and
\item
$s_j^{\B_i}=\emptyset$ for $j\neq i$,
\end{itemize}
and a labelling $\mu_i\colon B_i\rightarrow T$.
Moreover, the union of the homomorphisms from the spirals $\S(tp,q,tr)$ to $\C_i$ composed with the
epimorphism $\C_i\to(A;s_i^\A)$ yields an epimorphism $\phi_i\colon \B_i\rightarrow \A$
such that
\begin{itemize}
\item
 $\phi_i,\lambda, \mu_i$ satisfy \eqref{eq:QP} and
\item
for every $a\in A$ and every $\alpha\in T$ there exists $b\in B_i$ such that $\phi_i(b)=a$ and $\mu_i(b)=\alpha$.
\end{itemize}

Now, take the disjoint union over all $i\in [m]$ to obtain the structure
\[ \B := (B; s_1^{\B_1},\dots,s_m^{\B_m}) \text{ with } B := \bigcup_{i\in [m]} B_i, \]
the epimorphism $\phi:=\bigcup_{i\in [m]} \phi_i\colon \B\rightarrow \A$ and the labelling
$\mu:=\bigcup_{i\in [m]}\mu_i\colon$ $B\rightarrow T$ such that the following hold:
\begin{itemize}
\item
$s_i^\B$ is surjective on $B_i$ (and empty elsewhere);
\item
 $\phi,\lambda,\mu$ satisfy \eqref{eq:QP};
\item
for every $a\in A$, $\alpha\in T$ and $i\in [m]$ there exists $b\in B_i$ such that $\phi(b)=a$ and $\mu(b)=\alpha$.
\end{itemize}

This is not yet the structure we are aiming for: the issue is that the relations $s_i^\B$ are not
surjective for $m>1$.
We will now successively add some extra pairs to each $s_i^\B$ in such a way that $\phi$
remains a homomorphism, \eqref{eq:QP} continues to hold for $\phi,\lambda,\mu$, and at the end of the
process all $s_i^\B$ are surjective.

Let $i,j\in [m]$ be distinct and $b\in B_i$.
Since $s_j^\A$ is surjective, there exists $a'\in A$ such that $(\phi(b),a')\in s_j^\A$.
Let $b'\in B_j$ be such that $\phi(b')=a'$ and $\mu(b')=\mu(b)(\lambda(a'))_j$.
Such a $b'$ exists by the construction of $\B_j$.
Add the pair $(b,b')$ to $s_j^\B$.
It is straightforward to verify that with this choice of $b'$ the mapping $\phi$ remains a homomorphism
and $\phi,\lambda,\mu$ still satisfy \eqref{eq:QP}.
The analogous choice, starting with $(a'',\phi(b))\in s_j^\A$ lets us add another pair $(b'',b)$ to $s_j^\B$.
Repeating this for all distinct $i,j\in [m]$ and $b\in B_i$ results in all $s_j^\B$ becoming surjective
and completes the proof.
\end{proof}

\section{Filtered Boolean powers}
\label{sec:fbp}

In this section we prove our main result Theorem \ref{thm:DAG} for the non-abelian case.
Throughout let $\AA$ be a fixed finite simple non-abelian Mal'cev algebra, let $n\geq 0$,
let $\be := (e_1,\dots,e_n)$ be a (possibly empty) tuple of singleton subalgebras of~$\AA$, and
let $\bx := (x_1,\dots,x_n)$ be a tuple of distinct points in $2^\omega$.
We denote the filtered Boolean power $\ABxe$ by $\DD$ and its automorphism group $\Aut\DD$ by $G$.
We can assume without loss of generality that $e_1,\dots,e_n$ are all in distinct $\Aut\AA$-orbits.
If not, we can just take a subset of orbit representatives without changing the isomorphism type of $\DD$
by  \cite[Corollary 2.8]{MR:FBP}.
By \cite[Theorem 2.5]{MR:FBP} the group $G$ splits into a semidirect product
\[ G=K\rtimes H, \]
where $H$ is isomorphic to the pointwise stabiliser $(\Homeo 2^\omega)_\bx$
and the normal subgroup $K$ is a closure of the filtered Boolean power
$(\Aut\AB)^\bx_{\mathbf{1}}$
in $G$, where $\mathbf{1}$ is the $n$-tuple $(1,\dots,1)$ for $1$ the identity of $\Aut\AA$.
We recall from~\cite[beginning of Section 2.3]{MR:FBP} that
\begin{equation*} \label{eq:H1}
H=\bigl\{ \overline{\psi} \st \psi \in (\Homeo 2^\omega)_\bx\bigr\},
\end{equation*}
where
\begin{equation}
\label{eq:H2}
\overline{\psi}\colon D\rightarrow D,\ f\mapsto f\psi^{-1},
\end{equation}
and $\psi\mapsto\overline{\psi}$ is an isomorphism $(\Homeo 2^\omega)_\bx\rightarrow H$.
 Recall that $2^{\omega\circ}$ denotes the punctured space $2^\omega \setminus\{x_1,\dots,x_n\}$.
By \cite[Theorem 2.5]{MR:FBP} the kernel $K$ is isomorphic to 
\begin{multline}
\label{eq:K1}
K' := \bigl\{ \psi\colon 2^{\omega\circ}\rightarrow \Aut\AA \st \psi \text{ continuous},\\
\psi^{-1}\bigl((\Aut\AA)_{e_i}\bigr)\cup\{x_i\}\text{ open in } 2^\omega, \ \forall i\in [n]\bigr\}.
\end{multline}
The isomorphism given in \cite[Theorem 2.5]{MR:FBP} is
\begin{equation}
\label{eq:K2}
K\to K',\ \phi \mapsto [2^{\omega\circ}\rightarrow \Aut\AA,\ x\mapsto \phi_x],
\end{equation}
where $\phi_x\in \Aut\AA$ is uniquely defined by
\begin{equation}
\label{eq:K3}
\phi_x(f(x))=\bigl[\phi(f)\bigr](x) \text{ for all } f\in D,\ x\in 2^{\omega\circ}.
\end{equation}
In what follows we will prefer to use the inverse of this isomorphism, namely
\begin{equation}
\label{eq:K4}
K'\rightarrow K,\ \psi\mapsto \widehat{\psi},
\text{ where }
\bigl[\widehat{\psi}(f)\bigr](x)=\bigl[\psi(x)\bigr](f(x)) \text{ for } x\in 2^{\omega\circ}.
\end{equation}

The basic idea behind the proof of Theorem \ref{thm:DAG} is as follows.
By Theorem~\ref{thm:H}, $H\cong (\Homeo 2^\omega)_\bx$ has ample generics.
We will show that any generic in $H^m$ is also generic in $G^m$.
More precisely, let $\hh:=(h_1,\dots,h_m)$ be a generic $m$-tuple for $(\Homeo 2^\omega)_\bx$ throughout this section.
We will prove that $\overline{\hh}:=(\overline{h}_1,\dots,\overline{h}_m)$ is a generic $m$-tuple for $G$.

 Lemma \ref{la:surjQP} is the key combinatorial fact behind the following somewhat unexpected result,
 which asserts that every translate of $\overline{\hh}$  by a tuple from $K^m$ can be expressed as a conjugate
 of $\overline{\hh}$ by an element of~$K$.

\begin{lemma}
\label{la:transconj}
For every $\ba\in K^m$ there exists $c\in K$ such that $\ba \overline{\hh}=\overline{\hh}^c$.
\end{lemma}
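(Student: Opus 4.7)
The plan is to translate the conjugation identity $a_i \overline{h}_i = \overline{h}_i^c$ into a functional equation for a continuous map $\gamma \colon 2^{\omega\circ} \to \Aut\AA$ and then to solve it piecewise on a clopen decomposition of $2^\omega$ by applying Lemma~\ref{la:surjQP}. Via the isomorphism (\ref{eq:K4}) I would write each $a_i = \widehat{\alpha_i}$ with $\alpha_i \in K'$ and seek $c = \widehat{\gamma}$ with $\gamma \in K'$. A direct computation of $c^{-1}\overline{h}_i c \overline{h}_i^{-1}$ applied to $f \in D$ at $x \in 2^{\omega\circ}$ using (\ref{eq:H2}) and (\ref{eq:K4}) shows that the required identity is equivalent to
\[
\gamma(x)^{-1}\gamma(h_i^{-1}(x)) = \alpha_i(x) \quad (i \in [m],\ x \in 2^{\omega\circ}).
\]
Using Theorem~\ref{thm:H2} I would identify $(2^\omega; h_1,\dots,h_m,x_1,\dots,x_n)$ with the projective Fra{\"\i}ss{\'e} limit $\bL$ of $\F^{(n)}$, so that each $s_i^\bL$ is the graph of $h_i$; reading the displayed equation with $y := h_i^{-1}(x)$ and then inverting both sides exhibits it as the quotient property (\ref{eq:QP}) for $\mu = \gamma$ on $\bL$ with labelling the pointwise inverse of $\alpha_i$.

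Next I would decompose $2^\omega$ so that the local data becomes finite. Because each $\alpha_i \in K'$, Lemma~\ref{la:clcoco} supplies pairwise disjoint clopen unions of connected components $V_j$ of $\bL$ with $x_j \in V_j$ and $\alpha_i(V_j \setminus \{x_j\}) \subseteq (\Aut\AA)_{e_j}$ for every $i$. Iterating Lemma~\ref{la:clcoco} inside each $V_j$ produces a nested sequence $V_j = V_j^{(0)} \supsetneq V_j^{(1)} \supsetneq \cdots$ of clopen unions of components with $\bigcap_k V_j^{(k)} = \{x_j\}$. The clopen compacta $W_0 := 2^\omega \setminus \bigcup_j V_j$ and $W_j^{(k)} := V_j^{(k-1)} \setminus V_j^{(k)}$ then partition $2^{\omega\circ}$; each is a union of connected components of $\bL$ (hence $h_i$-invariant, since $s_i^\bL$ preserves components) and disjoint from every $x_{j'}$. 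By Lemma~\ref{la:MLk} applied with $k=0$, the restriction of $\bL$ to any such piece is isomorphic to the projective Fra{\"\i}ss{\'e} limit of $\F$.

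On each piece $U$, set $T_U := \Aut\AA$ when $U = W_0$ and $T_U := (\Aut\AA)_{e_j}$ when $U = W_j^{(k)}$. Then $\alpha_i|_U$ is continuous from the compact space $U$ into the finite discrete group $T_U$, so it factors through an epimorphism $\bL|_U \to \A_U \in \F$ via some labelling $\lambda^U \colon A_U \to T_U^m$. I would then apply Lemma~\ref{la:surjQP} to $\A_U$ with the pointwise inverse of $\lambda^U$ and group $T_U$ to produce $\B_U \in \F_0$, an epimorphism $\B_U \to \A_U$, and $\mu_U \colon B_U \to T_U$ satisfying (\ref{eq:QP}); Proposition~\ref{pr:Kw}\eqref{it:Kw1} then supplies $\C_U \in \F$ with an epimorphism $\C_U \to \B_U$, and projective universality of $\bL|_U$ provides an epimorphism $\bL|_U \to \C_U$ along which I pull $\mu_U$ back to define $\gamma|_U$. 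Patching the $\gamma|_U$ together yields a continuous $\gamma \colon 2^{\omega\circ} \to \Aut\AA$ with $\gamma(V_j \setminus \{x_j\}) \subseteq (\Aut\AA)_{e_j}$, so $\gamma \in K'$; and the displayed equation holds on each piece by construction and globally because $h_i^{-1}$ preserves every piece. The main obstacle is that $\alpha_i$ may oscillate within $(\Aut\AA)_{e_j}$ at arbitrarily small scales near each $x_j$, so $\alpha_i$ does not factor through any single finite quotient of $\bL$ and Lemma~\ref{la:surjQP} cannot be applied globally; the shell decomposition around each $x_j$, combined with the fact that every shell carries an isomorphic copy of the Fra{\"\i}ss{\'e} limit of $\F$, is what makes this local-to-global assembly possible.
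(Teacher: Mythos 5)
Your plan mirrors the paper's proof quite closely: you derive the same functional equation $\gamma(x)^{-1}\gamma(h_i^{-1}(x)) = \alpha_i(x)$, reduce to a finite quotient, apply Lemma~\ref{la:surjQP}, and handle the filtering points by a shell decomposition into clopen unions of connected components (each carrying a copy of the Fra\"\i ss\'e limit of $\F$ by Lemma~\ref{la:MLk}). The paper organises this as a clean $n=0$ case followed by a reduction of $n>0$ to it; you do both at once. Your device of feeding the subgroup $(\Aut\AA)_{e_j}$ directly into Lemma~\ref{la:surjQP} on the shells near $x_j$ is a perfectly good variant of the paper's observation that the $\mu$-values may be confined to the subgroup generated by the $\alpha_{ij}$.

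There is, however, one genuine gap in the step where you lift $\mu_U$ from $\C_U$ back to $\bL\restr_U$. You invoke \emph{projective universality} to get an epimorphism $\xi\colon \bL\restr_U\to\C_U$. But the constructed $\gamma\restr_U := \mu_U\circ(\text{epi }\C_U\to\B_U)\circ\xi$ only satisfies the desired equation on a pair $(y,x)\in s_i^{\bL\restr_U}$ if the composite epimorphism $\bL\restr_U \to \C_U\to\B_U\to\A_U$ agrees with the original factoring map $\phi\colon\bL\restr_U\to\A_U$ through which $\alpha_i\restr_U$ was read off. Concretely, from \eqref{eq:QP} one gets $\gamma(y)^{-1}\gamma(x) = \hat\lambda^U\bigl(\phi'\chi\xi(x)\bigr)_i$, whereas what you need is $\hat\lambda^U\bigl(\phi(x)\bigr)_i$; projective universality alone gives you no control over $\phi'\chi\xi$ versus $\phi$. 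The fix is to use the \emph{Extension Property} \ref{it:L4} (as the paper does with $\phi_1=\phi_2\psi$), applied to the two epimorphisms $\phi\colon\bL\restr_U\to\A_U$ and $\C_U\to\B_U\to\A_U$, to produce $\xi$ with the required compatibility. With that replacement your argument goes through.
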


\begin{proof}
 We will first prove the lemma in the case $n=0$ when there are no filtering points, i.e.~we are dealing with the
 `full'/`unfiltered' Boolean power $\DD := \AB$.
 For $n>0$ we will then use the trick from Section \ref{sec:X0} of partitioning $2^{\omega\circ}=2^\omega\setminus \{ x_1,\dots,x_n\}$ into $n$ infinite sequences of clopens converging to $x_1,\dots, x_n$, respectively.
\medskip

\noindent
{\bf Case \boldmath{$n=0$}.}
Let $\ba = (a_1,\dots,a_m)\in K^m$.
From the description \eqref{eq:K1}--\eqref{eq:K4} of the kernel $K$, we have $k\in\N$ and a partition
$P=\{b_1,\dots,b_k\}$ of $2^\omega$ into clopens such that 
each $a_i$ acts as a fixed automorphism $\alpha_{ij}\in\Aut\AA$ on each $b_j$, i.e.
\begin{equation}
\label{eq:trco1}
\bigl[a_i(f)\bigr](x)=\alpha_{ij}f(x)\quad \text{for all } i\in [m],\ f\in D,\ x\in b_j, j\in [k].
\end{equation}
Recall that $(2^\omega;h_1,\dots,h_m)$ is (isomorphic to) the projective Fra{\"\i}ss\'e limit of
$\F = \F^{(0)}$ by Theorem~\ref{thm:H2}.
Let $\P=(P; h_1^\P,\dots,h_m^\P)$ be the (surjective) structure induced by $\hh$ on $P$; in other words,
$\P$ is the image of $(2^\omega;h_1,\dots,h_m)$ via the natural projection $\phi_1\colon 2^\omega\rightarrow P$.
Without loss of generality we can assume that in fact $\P\in \F$.
Indeed, if it is not, we can use the Factoring Property \ref{it:L2} to replace $\P$ by a preimage in $\F$
through which $\phi_1$ factors.
Note that this preimage induces a refinement of the partition $P$ of $2^\omega$ and each
$a_i$ still acts like a fixed automorphism on each of its blocks.

We can consider the automorphisms $\alpha_{ij}^{-1}$ as a labelling $\lambda\colon P\rightarrow (\Aut\AA)^m$.
By Lemma \ref{la:surjQP} there exist a surjective structure $\Q=(Q;s_1^\Q,\dots, s_m^\Q)$, an epimorphism
$\phi_2\colon \Q\rightarrow \P$ and a labelling $\mu\colon Q\rightarrow \Aut\AA$ satisfying \eqref{eq:QP},
i.e.
\begin{equation}
\label{eq:trco2}
\mu(x)^{-1}\mu(y)=\alpha_{ij}^{-1} \quad\text{for } (x,y)\in s_i^\Q,\  i\in [m],\ \phi_2(y)= b_j,\ j\in [k].
\end{equation}
Furthermore, the values $\mu(x)$ can all be chosen to belong to the subgroup of $\Aut\AA$ generated by the $\alpha_{ij}$.
Again, we may assume that $\Q\in\F$;
otherwise we replace it using \ref{it:L2} with a preimage through which $\phi_2$ factors and label the preimage
of any point from the original $Q$ by the label of that point.
Now, using the Extension Property \ref{it:L4}, there exists an epimorphism
$\psi\colon (2^\omega;h_1,\dots,h_m)\rightarrow \Q$
such that $\phi_1=\phi_2\psi$.

Define an automorphism $c\in K$ which is constant on the refinement of $P$ that is induced by $\Q$ via
\begin{equation}
\label{eq:defc}
\bigl[c(f)\bigr](x) :=\bigl[\mu\psi(x)\bigr]\, \bigl(f(x)\bigr)\quad \text{for } f\in D,\ x\in 2^\omega.
\end{equation}
 We claim that $\ba\overline{\hh} = \overline{\hh}^c$ as required.

 For verifying this let $i\in [m]$, $f\in D$, $x\in 2^\omega$, and 
\begin{equation}
\label{eq:qr}
q:= \psi(x),\  r:=\psi h_i^{-1}(x),\ b_j:=\phi_2(r).
\end{equation}
 Then
\begin{align*}
 \bigl[c^{-1}\overline{h}_i^{-1}c(f)\bigr](x) &=
\bigl[\mu(q)^{-1}\bigr]\bigl( \bigl[\overline{h}_ic(f)\bigr](x)\bigr)&&\text{by \eqref{eq:defc}, \eqref{eq:qr}}
\\
&=\bigl[\mu(q)^{-1}\bigr] \Bigl( \bigl[c(f)\bigr]\bigl(h_i^{-1}(x)\bigr)\Bigr)&&\text{by \eqref{eq:H2}}
\\
&=\bigl[\mu(q)^{-1}\bigr]\Bigl( \bigl[\mu(r)\bigr] \bigl(fh_i^{-1}(x)\bigr)\Bigr)&&\text{by \eqref{eq:defc}, \eqref{eq:qr}}\\
&=\bigl[\mu(q)^{-1} \mu(r)\bigr] \bigl(fh_i^{-1}(x)\bigr).
\end{align*}
Since $\psi$ is a homomorphism, we have 
\[
(r,q)=\bigl(\psi h^{-1}(x),\psi(x)\bigr)\in s_i^\Q.
\]
Therefore, using \eqref{eq:trco2}, \eqref{eq:qr}, we have
\[
\bigl[\mu(q)^{-1} \mu(r)\bigr] \bigl(fh_i^{-1}(x)\bigr) =
\alpha_{ij}fh_i^{-1}(x).
\]
From \eqref{eq:qr} and $\phi_1=\phi_2\psi$, we obtain
$\phi_1(h_i^{-1}(x))=b_j$, i.e.~$h_i^{-1}(x)\in b_j$.
Hence
\[
\alpha_{ij}\bigl(fh_i^{-1}(x)\bigr)=\bigl[ a_i(f)\bigr] \bigl(h_i^{-1}(x)\bigr)=\bigl[[a_i\overline{h}_i](f)\bigr](x),
\]
by \eqref{eq:H2}. Putting the above calculations together yields
\[
[c^{-1}\overline{h}_i^{-1}c(f)](x)=\bigl[[a_i\overline{h}_i](f)\bigr](x)
\]
for all $i\in [m]$, $f\in D$, $x\in 2^\omega$. Therefore $c^{-1}\overline{\hh}c=\ba\overline{\hh}$,
as required.
\medskip

\noindent
{\bf Case \boldmath{$n>0$}.}
Using Lemma \ref{la:clcoco}, partition $ 2^{\omega\circ} = 2^\omega\setminus \{ x_1,\dots, x_n\}$ into clopens $b_{ij}$ for
$i\in [n]$,
$j\in\N$, which are unions of connected components of $(2^\omega;h_1,\dots,h_m)$ and such that each sequence $(b_{ij})_j$
converges to $x_i$ for $i\in [n]$.

Let $b:=b_{ij}$.
By Lemma \ref{la:MLk} the structure $(b; h_1\restr_b,\dots,h_m\restr_b)$ is the projective Fra\"iss\'e limit
of $\F$.
Since $b$ is homeomorphic to $2^\omega$, the Stone space of $\BB$, by the case $n=0$ there exists
a continuous $c_{ij}\colon b \to\Aut\AA$
such that $(\ba\restr_b)(\overline{\hh}\restr_b)=$ $(\overline{\hh}\restr_b)^{\widehat{c}_{ij}}$;
 see \eqref{eq:K4} for the definition of $\widehat{c}_{ij}$.
Furthermore, if  $a_x\in (\Aut\AA)_{e_i}$ for all $x\in b$,  then $c_{ij}$ can be chosen to also satisfy
$c_{ij}(x)\in (\Aut\AA)_{e_i}$ for all $x\in b$;
see the remark following \eqref{eq:trco2}, and also see \eqref{eq:K3} for the definition of $a_x$.

Now, let $c$ be the union of all the $c_{ij}$ over all $b_{ij}$ partitioning $2^{\omega\circ}$.
We claim that $c\in K'$.
Indeed, the set $\{ x\in 2^{\omega\circ} \colon c(x)=\alpha\}$ is open in $2^{\omega\circ}$
for every $\alpha\in \Aut\AA$, 
because it is the  union of clopen sets $\{x\in b_{ij}\colon c_{ij}(x)=\alpha\}$.
Furthermore, since $\ba\in K^m$, and since the sequence $(b_{ij})_j$ converges to $x_i$, it follows that 
$(a_k)_x\in(\Aut\AA)_{e_i}$ for all $k\in [m]$, $x\in b_{ij}$ and sufficiently large $j$.
Therefore 
$c(b_{ij})\subseteq (\Aut\AA)_{e_i}$ for all sufficiently large $j$.
With \eqref{eq:K1} it now follows that $c\in K'$, completing the proof of the claim.
Then $\widehat{c}\in K$ and clearly $\ba\hh=\hh^{\widehat{c}}$. This completes the proof of case $n>0$
and of the lemma.
\end{proof}

\begin{lemma}
\label{la:concos}
$\overline{\hh}^G= K^m\, \overline{\hh}^H$.
\end{lemma}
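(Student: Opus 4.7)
The plan is to prove the two inclusions $\overline{\hh}^G \subseteq K^m\, \overline{\hh}^H$ and $K^m\, \overline{\hh}^H \subseteq \overline{\hh}^G$ separately, both relying on the semidirect product decomposition $G = K \rtimes H$, the normality of $K$ in $G$, and Lemma~\ref{la:transconj}.

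For the forward inclusion, I would factor any $g \in G$ uniquely as $g = c\eta$ with $c \in K$ and $\eta \in H$, so that $\overline{\hh}^g = (\overline{\hh}^c)^\eta$. A componentwise computation
\[
c^{-1}\, \overline{h}_i\, c = \bigl(c^{-1}\cdot \overline{h}_i c \overline{h}_i^{-1}\bigr)\, \overline{h}_i
\]
places the bracketed element in $K$ by normality, yielding $\overline{\hh}^c \in K^m\, \overline{\hh}$. Since conjugation by $\eta$ preserves $K^m$, it follows that $\overline{\hh}^g \in K^m\, \overline{\hh}^\eta \subseteq K^m\, \overline{\hh}^H$.

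For the reverse inclusion, I would fix $\ba \in K^m$ and $\eta \in H$. Since $K$ is normal, $\ba^{\eta^{-1}} := \eta \ba \eta^{-1}$ (componentwise) lies in $K^m$, so Lemma~\ref{la:transconj} produces $c \in K$ with $\ba^{\eta^{-1}}\, \overline{\hh} = \overline{\hh}^c$. The identity
\[
\ba\, \overline{\hh}^\eta \;=\; \ba\, \eta^{-1}\overline{\hh}\, \eta \;=\; \eta^{-1}\bigl(\eta \ba \eta^{-1}\bigr)\overline{\hh}\, \eta \;=\; \bigl(\ba^{\eta^{-1}}\, \overline{\hh}\bigr)^\eta \;=\; (\overline{\hh}^c)^\eta \;=\; \overline{\hh}^{c\eta}
\]
then places $\ba\, \overline{\hh}^\eta$ in $\overline{\hh}^G$, as required.

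There is no real obstacle: all the substantive combinatorial content (constructing the conjugator $c$ via the quotient property and the projective Fra{\"\i}ss\'e machinery) is packaged into Lemma~\ref{la:transconj}, and the present lemma is a routine consequence of that together with the manipulation of conjugation in a semidirect product.
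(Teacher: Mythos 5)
Your proof is correct and takes essentially the same approach as the paper: factor $g$ through the semidirect product $G = K\rtimes H$, use normality of $K$ to extract a $K^m$-factor for the forward inclusion, and invoke Lemma~\ref{la:transconj} for the reverse. The only cosmetic difference is that you factor $g = c\eta$ ($K$ then $H$) where the paper uses $g = dc$ ($H$ then $K$), and you spell out the conjugation-by-$\eta$ step in the reverse inclusion that the paper leaves implicit.
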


\begin{proof}
($\subseteq$) 
Let $g\in G$ be arbitrary, and write it as $g=dc$ with $c\in K$, $d\in H$.
Let $\bc:=(c,\dots,c)\in K^m$. Then
\[
\overline{\hh}^g=\overline{\hh}^{dc}=\bc^{-1}\cdot\overline{\hh}^d\bc (\overline{\hh}^d)^{-1}\cdot \overline{\hh}^d\in K^m\, \overline{\hh}^H
\]
because $K$ is normal in $G$.

\noindent
($\supseteq$) This follows from Lemma \ref{la:transconj}. 
\end{proof}

 Finally we can complete the proof of our main result.

\begin{proof}[Proof of Theorem \ref{thm:DAG} in the non-abelian case]
As above, let $\hh$ be a generic $m$-tuple for the group $(\Homeo 2^\omega)_\bx$. 
We will show that $\overline{\hh}^G$ is dense and the  intersection of countably many open sets in $G^m$.
\medskip

\noindent
{\bf \boldmath{$\overline{\hh}^G$} is dense in \boldmath{$G^m$}.}
Consider a non-empty open set $O_G$ in $G^m$.
Since the topology on $G^m$ is the product topology induced by the topology of pointwise convergence in $G$, 
it follows that $O_G$ must contain an open set of the form $\bg (G_\ff)^m$ for some $\bg \in G^m$, $k\in\N$ and
$\ff \in D^k$.
Write $\bg=\ba\uu$ with $\ba\in K^m$, $\uu\in H^m$.
The set $O_H:=\uu (H_\ff)^m$ is open in $H^m$ and satisfies $\ba O_H\subseteq O_G$.
Since $\overline{\hh}$ is a generic $m$-tuple for $H$, it follows that $\overline{\hh}^H \cap O_H \neq\emptyset$.
But then, using Lemma \ref{la:concos}, we have
\[
\emptyset\neq \ba\overline{\hh}^H\cap \ba O_H\subseteq \overline{\hh}^G\cap O_G.
\]
 Hence $\overline{\hh}^G$ intersects any nonempty open set and is dense in $G^m$.

\noindent
{\bf \boldmath{$\overline{\hh}^G$} is the intersection of countably many open sets.}
We first claim that
\begin{equation} \label{eq:KmO}
 K^m O \text{ is open in } G^m \text{ for every open set } O \text{ in } H^m.
\end{equation}  
It is sufficient to prove this for $O$ of the form
\[
O=\Bigl\{ \overline{\uu}\colon \uu\in \bigl((\Homeo 2^\omega)_{(b_1,\dots, b_k)}\bigr)^m\Bigr\},
\]
where $2^\omega=b_1\dotcup\dots\dotcup b_k$ is a partition into clopens.
Let $\ff$ be the tuple of all $f\in D$ which are constant on all $b_1,\dots, b_k$.
We will prove that $K^m O$ is open by proving that
\[
K^m O=K^m (G_\ff)^m.
\]
The direct inclusion ($\subseteq$) is immediate from $O\subseteq (G_\ff)^m$.
For the reverse inclusion ($\supseteq$) let $\bg\in (G_\ff)^m$ be arbitrary, and write $\bg=\ba\uu$ with
$\ba\in K^m$, $\uu\in H^m$.
As for \cite[(5.5)]{MR:FBP} one can show that $\uu\in O$, using that $e_1,\dots, e_n$ are in distinct
$\Aut\AA$-orbits.
Hence $K^m\bg=(K^m\ba)\uu\subseteq K^m O$, as required, and~\eqref{eq:KmO} is proved.

Since $\overline{\hh}$ is a generic $m$-tuple for $H$ by assumption, there exist countably many open subsets $O_i$
of $H^m$ for $i\in\N$ such that
\[ \overline{\hh}^H = \bigcap_{i\in\N} O_i. \]
Using Lemma~\ref{la:concos} and that $H$ is a complement for $K$ in $G$, we see that
\[ \overline{\hh}^G = K^m \overline{\hh}^H = K^m  \bigcap_{i\in\N} O_i =  \bigcap_{i\in\N} K^m O_i, \]
a countable intersection of open subsets of $G^m$ by~\eqref{eq:KmO}.

Thus $\overline{\hh}$ is a generic $m$-tuple for $G$ and the theorem is proved.
\end{proof}

\section{Open Questions}
\label{sec:qus}

 For $\AA$ a finite simple non-abelian group, let $\FF$ be the free group of countable rank in the
 variety generated by $\AA$. As observed in~\cite[p.~367-8]{BE:SIP} and \cite[p.~201]{BG:ARFG},
 $\FF$ has a normal subgroup
 $N$ isomorphic to the filtered Boolean power $(\AA^{\BB})^x_1$ with quotient $\FF/N$ the free algebra
 of countable rank in the variety generated by all proper subgroups of~$\AA$.
 Bryant and Evans~\cite{BE:SIP} showed that $\Aut(\AA^{\BB})^x_1$ has the small index property.
 Their question whether the same is true for the free group $\FF$ remains open.
 The same question is also of interest for $\AA$ an arbitrary finite simple non-abelian Mal'cev algebra
 and $\FF$ the free algebra of countable rank in the variety $\AA$ generates.
 Specifically, does $\Aut\FF$ have the small index property and ample generics?

\section*{Acknowledgments}

 We gratefully acknowledge the support of Billie, Kathryn and Martin in whose garden these results have been proved.

\bibliographystyle{plain}

\begin{thebibliography}{99}
\bibitem{Ap:BPG}
A.B.~Apps.
\newblock Boolean powers of groups.
\newblock {\em Math. Proc. Cambridge Philos. Soc.}, 91(3):375--395, 1982.

\bibitem{AK:TRA}
R.F.~Arens and I.~Kaplansky.
\newblock Topological representation of algebras.
\newblock {\em Trans. Amer. Math. Soc.}, 63:457--481, 1948.

\bibitem{BE:SIP}
R.M.~Bryant and D.M.~Evans.
\newblock The small index property for free groups and relatively free groups.
\newblock {\em J. London Math. Soc. (2)}, 55(2):363--369, 1997.

\bibitem{BG:ARFG}
R.M.~Bryant and J.R.J.~Groves.
\newblock On automorphisms of relatively free groups.
\newblock {\em J. Algebra}, 137(1):195--205, 1991.

\bibitem{Bu:BP}
S.~Burris.
\newblock Boolean powers.
\newblock {\em Algebra Universalis}, 5(3):341--360, 1975.

\bibitem{BS:CUA}
S.~Burris and H.P.~Sankappanavar.
\newblock {\em A course in universal algebra}.
\newblock Springer, New York Heidelberg Berlin, 1981. \newblock Available from

  \verb+www.math.uwaterloo.ca/~snburris/htdocs/UALG/univ-algebra2012.pdf+.

\bibitem{DH:GAG}
M.~Droste and C.~Holland.
\newblock Generating automorphism groups of chains.
\newblock {\em Forum Math.}, 17(4):699--710, 2005.

\bibitem{Ev:SSI}
D.M.~Evans.
\newblock Subgroups of small index in infinite general linear groups.
\newblock {\em Bull. London Math. Soc.}, 18(6):587--590, 1986.

\bibitem{Ev:EAC}
D.M.~Evans.
\newblock Examples of {$\aleph_0$}-categorical structures.
\newblock In {\em Automorphisms of first-order structures}, Oxford Sci. Publ.,
  pages 33--72. Oxford Univ. Press, New York, 1994.

\bibitem{Fo:GBT}
A.L.~Foster.
\newblock Generalized ``{B}oolean'' theory of universal algebras. {I}.
  {S}ubdirect sums and normal representation theorem.
\newblock {\em Math. Z.}, 58:306--336, 1953.

\bibitem{FM:CTC}
R.~Freese and R.N.~McKenzie.
\newblock {\em Commutator theory for congruence modular varieties}, volume 125
  of {\em London Math. Soc. Lecture Note Ser.}
\newblock Cambridge University Press, Cambridge, 1987.
\newblock Available from\\
  \verb+math.hawaii.edu/~ralph/Commutator/comm.pdf+.

\bibitem{Ho:MT}
W.~Hodges.
\newblock {\em Model theory}, volume~42 of {\em Encyclopedia of Mathematics and
  its Applications}.
\newblock Cambridge University Press, Cambridge, 1993.

\bibitem{HHLS:SIP}
W.~Hodges, I.~Hodkinson, D.~Lascar, and S.~Shelah.
\newblock The small index property for {$\omega$}-stable {$\omega$}-categorical
  structures and for the random graph.
\newblock {\em J. London Math. Soc. (2)}, 48(2):204--218, 1993.

\bibitem{Hr:EPI}
E.~Hrushovski.
\newblock Extending partial isomorphisms of graphs.
\newblock {\em Combinatorica}, 12:411--416, 1992.

\bibitem{IS:PFL}
T.~Irwin and S.~Solecki.
\newblock Projective Fra\"iss\'e{} limits and the pseudo-arc.
	{\em Trans. Amer. Math. Soc.}, 358:3077--3096, 2006.
	

 \bibitem{KR:TAG}
 A.S.~Kechris and C.~Rosendal.
 \newblock Turbulence, amalgamation, and generic automorphisms of homogeneous
   structures.
 \newblock {\em Proc. Lond. Math. Soc. (3)}, 94(2):302--350, 2007.

\bibitem{KT:GAU}
D.~Kuske and J.K.~Truss.
\newblock Generic automorphisms of the universal partial order.
\newblock {\em Proc. Amer. Math. Soc.}, 129(7):1939--1948, 2001.

\bibitem{Kw:GHC}
A.~Kwiatkowska.
\newblock The group of homeomorphisms of the {C}antor set has ample generics.
\newblock {\em Bull. Lond. Math. Soc.}, 44(6):1132--1146, 2012.

\bibitem{MR:CRN}
A.~Macintyre and J.G.~Rosenstein.
\newblock {$\aleph \sb{0}$}-categoricity for rings without nilpotent elements and for {B}oolean structures.
\newblock {\em J. Algebra}, 43(1):129--154, 1976.

\bibitem{Mac:SHS}
D.~Macpherson.
\newblock A survey of homogeneous structures.
\newblock {\em Discrete Math.}, 311(15):1599--1634, 2011.

\bibitem{MR:FBP}
 P.~Mayr and N.~Ru\v{s}kuc,
 Filtered Boolean powers of finite simple non-abelian Mal'cev algebras. Submitted. 
 \verb+arXiv:2404.17322+.


\bibitem{MMT:ALV1}
 R.N. McKenzie, G.F. McNulty, and W.F. Taylor.
 \newblock {\em Algebras, lattices, varieties, Volume {I}}.
 \newblock Wadsworth \& Brooks/Cole Advanced Books \& Software, Monterey, California, 1987.
  

\bibitem{Mo:HBA1}
J.D.~Monk and R.~Bonnet, editors.
\newblock {\em Handbook of {B}oolean algebras. {V}ol. 1}.
\newblock North-Holland Publishing Co., Amsterdam, 1989.

\bibitem{So:EPI}
S.~Solecki.
\newblock Extending partial isometries.
\newblock {\em Israel J. Math.}, 150:315--331, 2005.

\bibitem{Sz:CUA}
\'A. Szendrei.
\newblock {\em Clones in universal algebra}, volume~99 of {\em S\'eminaire de
  Math\'ematiques Sup\'erieures [Seminar on Higher Mathematics]}.
\newblock Presses de l'Universit\'e{} de Montr\'eal, Montreal, QC, 1986.

\bibitem{To:IGL}
V.A.~Tolstykh.
\newblock Infinite-dimensional general linear groups are groups of finite
  width.
\newblock {\em Sibirsk. Mat. Zh.}, 47(5):1160--1166, 2006.
\end{thebibliography}

\end{document}